\newcommand{\xrightarrowdbl}[2][]{%
\xrightarrow[#1]{#2}\mathrel{\mkern-14mu}\rightarrow
}
\newcommand{\comment}[1]{}
\numberwithin{equation}{section}
\theoremstyle{plain}
\newtheorem{maintheorem}{Theorem}
\newtheorem{theorem}{Theorem}[section]
\newtheorem{lemma}[theorem]{Lemma}
\newtheorem{coro}[theorem]{Corollary}
\newtheorem{prop}[theorem]{Proposition}
\theoremstyle{definition}
\newtheorem{definition}[theorem]{Definition}
\newtheorem{example}[theorem]{Example}
\theoremstyle{remark}
\newtheorem{remark}[theorem]{Remark}
\newtheorem{step}{Case}
\def\pf{\begin{proof}}
\def\epf{\end{proof}}
\newcommand{\mfL}{\mathfrak{g}}
\newcommand{\mgo}{\mathfrak{m}}
\newcommand{\mfG}{\mathfrak{fsl}(2)}
\newcommand{\spl}{\mathfrak{sl}}
\newcommand{\ugo}{\mathfrak u}
\newcommand{\nucleouno}{\mathbf R}
\newcommand{\nucleodos}{\mathfrak R}
\newcommand{\nucleo}{\mathbf T}
\newcommand{\ku}{ \Bbbk}
\newcommand{\kut}{ \ku^{\times}}
\newcommand{\I}{\mathbb I}
\newcommand{\J}{\mathbb J}
\newcommand{\N}{\mathbb N}
\newcommand{\jordan}{{\mathtt{J}}}
\newcommand{\Ss}{{\mathcal S}}
\newcommand{\cV}{\mathcal{V}}
\newcommand{\cO}{\mathcal{O}}
\newcommand{\Att}{\mathtt A}
\newcommand{\Btt}{\mathtt B}
\newcommand{\Ctt}{\mathtt C}
\newcommand{\Ftt}{\mathtt F}
\newcommand{\mtt}{\mathtt  m}
\newcommand{\Utt}{\mathtt U}
\newcommand{\Vtt}{\mathtt V}
\newcommand{\Wtt}{\mathtt W}
\newcommand{\cP}[3]{\mathcal{P}_{#1,#2}(#3)}
\newcommand\ad{\operatorname{ad}}
\newcommand{\Aut}{\operatorname{Aut}}
\newcommand{\car}{\operatorname{char}}
\newcommand{\Der}{\operatorname{Der}}
\newcommand{\End}{\operatorname{End}}
\newcommand{\id}{\operatorname{id}}
\newcommand{\Ind}{\operatorname{Ind}}
\newcommand{\Jac}{\operatorname{Jac}}
\newcommand{\Hom}{\operatorname{Hom}}
\newcommand{\soc}{\operatorname{soc}}
\newcommand{\rad}{\operatorname{rad}}
\newcommand{\res}{\operatorname{Res}}
\newcommand{\Indec}{\operatorname{indec}\nolimits}
\newcommand\ext{\operatorname{Ext}}
\newcommand{\lmod}[1]{\hspace{-2pt}{}_{#1}\mathcal{M}}
\newcommand{\rmod}[1]{\mathcal{M}_{ #1 }}
\newcommand\rep{\operatorname{rep}}
\newcommand{\yd}[1]{{}^{ #1 }_{ #1 }\mathcal{YD}}
\newcommand{\cD}{\mathcal{D}}
\newcommand{\toba}{\mathscr{B}}
\newcommand{\ot}{\otimes}
\newcommand{\Irr}{\operatorname{irrep}}
\newcounter{tabla}\stepcounter{tabla}
\begin{document}

\title[Double of the restricted Jordan plane in characteristic $2$]{On the Drinfeld double  of  the restricted Jordan plane in characteristic $2$}

\author[Andruskiewitsch, Bagio, Della Flora, Fl\^ores]
{Nicol\'as Andruskiewitsch, Dirceu Bagio, Saradia Della Flora, Daiana Fl\^ores}

\address{N. A.: FaMAF-Universidad Nacional de C\'ordoba, CIEM (CONICET),
Medina A\-llen\-de s/n, Ciudad Universitaria, 5000 C\' ordoba, Argentina.} \email{nicolas.andruskiewitsch@unc.edu.ar}

\address{D.B.: Departamento de Matem\'atica, Universidade Federal de Santa Catarina,
88040-900, Florianópolis, SC, Brazil}
\email{d.bagio@ufsc.br}

\address{S. D. F., D. F.: Departamento de Matem\'atica, Universidade Federal de Santa Maria,
97105-900, Santa Maria, RS, Brazil} \email{saradia.flora@ufsm.br, flores@ufsm.br}

\thanks{\noindent 2010 \emph{Mathematics Subject Classification.}
16T20, 17B37. \newline The work of N. A. was partially supported by CONICET(PIP 11220200102916CO),
FONCyT-ANPCyT(PICT-2019-03660) and Secyt (UNC)}

\begin{abstract}
We consider the restricted Jordan plane  in  characteristic $2$, a  finite-dimensional Nichols algebra 
quotient of the Jordan plane that was introduced by Cibils, Lauve and Witherspoon. We
extend results from \texttt{arXiv:2002.02514} on the analogous object in odd characteristic.
We show that the Drinfeld double  of the restricted Jordan plane fits into an exact sequence of Hopf algebras
whose kernel is a normal local commutative Hopf subalgebra and the cokernel is the restricted enveloping algebra of a restricted
Lie algebra $\mathfrak m$ of dimension 5.
We show that $\mathfrak u(\mathfrak m)$ is tame and compute explicitly
the indecomposable modules.
An infinite-dimensional Hopf algebra covering the Drinfeld double of the restricted Jordan plane is 
introduced. Various quantum Frobenius maps are described.  
\end{abstract}

\maketitle

\setcounter{tocdepth}{1}
\tableofcontents

\section{Introduction}
\subsection{Blocks and their Nichols algebras}
Let $\ku$ be an algebraically closed field. A braided vector space is a pair $(V, \mathfrak{c})$ where $V$ is 
a vector space and $\mathfrak{c}\in GL (V\otimes V)$ (called the braiding)
satisfies the braid equation 
\[(\mathfrak{c}\otimes \id)(\id \otimes \mathfrak{c})(\mathfrak{c}\otimes \id) =  (\id \otimes \mathfrak{c})(\mathfrak{c}\otimes \id)(\id \otimes \mathfrak{c}).\] 
Braided vector spaces and their Nichols algebras are key elements of the structure of Hopf algebras, cf. \cite{A-leyva}. Given $\epsilon \in \kut$ and $\ell \in \N_{\geq 2}$, let $\cV(\epsilon,\ell)$ 
be the braided vector space  with basis $\{x_i:  1 \leq  i \leq \ell\}$ and braiding 
$\mathfrak{c}_{\epsilon}$  given by
\begin{align}\label{eq:braidingsuperjordan}
\mathfrak{c}_{\epsilon}(x_i \otimes x_j) &= \begin{cases}
\epsilon x_1 \otimes x_i, &j =1,
\\
(\epsilon x_j + x_{j - 1}) \otimes x_i,& 2 \leq  j \leq \ell,
\end{cases} &  1 \leq  i \leq \ell.
\end{align}
The braided vector spaces $\cV(\epsilon,{\ell})$ are called \emph{blocks}; their Nichols algebras  are related to pointed Hopf algebras with abelian group.

\medbreak
The Jordan plane on  a field of characteristic different of $2$ is the well-known quadratic algebra  
\[J = \ku\langle x_1, x_2 : x_1x_2 - x_2x_1 - \frac{1}{2}x_1^2\rangle.\]
The super Jordan plane, introduced in  \cite{aah-triang}, is the 
graded  algebra 
\[sJ = \ku\langle x_1, x_2 : x_1^2, x_2 x_{21} - x_{21} x_2 - x_1 x_{21}\rangle,\]
where $x_{21} = x_2 x_1 + x_1 x_2$. 
Both the Jordan plane and the super Jordan plane
bear structures of  braided Hopf algebra  \cite{G,aah-triang}. 
When $\car \ku = 0$ the Nichols algebra $\toba(\cV(1,2)) $ is isomorphic to $J$ as braided Hopf algebra and similarly $\toba(\cV(-1,2)) \simeq sJ$.
These are the only  Nichols algebras of blocks with finite Gelfand-Kirillov dimension
\cite{aah-triang}.

\medbreak
When $\car \ku = p > 2$, it was shown in \cite{clw} that 
\begin{align*}
\toba(\cV(1,2)) &\simeq J/ \langle x_1^p, x_2^p\rangle& & \text{ and} & \dim \toba(\cV(1,2)) &= p^2.
\end{align*}
Motivated by this result, it was proved in \cite{aah-oddchar} under the same hypothesis that 
\begin{align*}
\toba(\cV(-1,2)) &\simeq sJ/ \langle x_{21}^p, x_2^{2p}\rangle& & \text{ and} & \dim \toba(\cV(-1,2)) &= 4p^2.
\end{align*}

The Nichols algebra $\toba(\cV(1,2))$, respectively  $\toba(\cV(-1,2))$,
is called the  \emph{restricted} Jordan, respectively super Jordan, plane. 
Restricted versions of other Nichols algebras from \cite{aah-triang} appear in \cite{aah-oddchar,ABDF}.

\subsection{Drinfeld doubles and exact sequences}
Assume that  $\car \ku = p$ is odd. Let $C_N$ denote  the cyclic group of order $N$. 
The Drinfeld doubles $D(\toba(\cV(1,2)) \# \ku C_p)$ and $D(\toba(\cV(-1,2)) \# \ku C_{2p})$  were studied in \cite{ap,ap1}.
It was shown in \cite{ap} that there is
a normal local commutative Hopf algebra $\nucleouno$ fitting into an exact sequence
\[\nucleouno \hookrightarrow D(\toba(\cV(1,2)) \# \ku C_p)\twoheadrightarrow \mathfrak u(\spl_2(\ku));\]
here $\mathfrak u(\spl_2(\ku))$
is the restricted enveloping algebra. Then the simple $\mathfrak u(\spl_2(\ku))$-modules are in bijective correspondence with those of $D(\toba(\cV(1,2)) \# \ku C_p)$.

\medbreak As for the restricted super Jordan plane, there is a Hopf superalgebra  $\cD$ such that
$D(\toba(\cV(-1,2)) \# \ku C_{2p})\simeq  \cD \# \ku C_2$.
Then there are a local commutative Hopf subalgebra $\nucleodos$ and
an exact sequence of Hopf superalgebras
\[\nucleodos \xhookrightarrow[]{\iota} \cD  \xrightarrowdbl[]{\pi} \ugo(\mathfrak{osp}(1|2)).\]
See \cite{ap1}.
As in the case of the restricted Jordan plane, the simple $\cD$-modules are in bijective correspondence with  those of $\ugo(\mathfrak{osp}(1|2))$.

\medbreak
Assume from now on that $\car\ku = 2$,
so that $\mathtt{V} \coloneqq  \cV(1,2) = \cV(-1,2)$. The Nichols algebra
$\toba(\mathtt{V})$ was also computed in \cite{clw}: it turns out that
$\toba(\mathtt{V})$ has dimension 16 and is presented by generators $x_1,x_2$ with defining relations
\begin{align} \label{eq-jordan-plane-char2}
x_1^2=0,&  &x_2^4=0,& &x_2^2x_1=x_1x_2^2+x_1x_2x_1,& &x_1x_2x_1x_2=x_2x_1x_2x_1.
\end{align}
Thus  $\toba(\mathtt{V})$, that we call the restricted Jordan plane in characteristic $2$, is closer to the restricted super Jordan plane
than to the Jordan one (in odd characteristic). 
Let $H = \toba(\mathtt{V}) \#\ku C_2$ be the  bosonization of the restricted Jordan plane and 
let $\mgo$ be the Lie algebra of dimension 5 described in \S \ref{subsection:restricted}.
To start with, we extend  \cite[Proposition 1.9]{ap} and \cite[Theorem 4.6]{ap1}.

\begin{maintheorem}\label{mainthm:abelian-extension}
The Drinfeld double $D(H)$ fits in an abelian extension
$$\nucleo\overset{\iota}\hookrightarrow D(H)\overset{\pi}\twoheadrightarrow \ugo(\mgo),$$ 
where $\nucleo$ is a normal local commutative Hopf subalgebra of $D(H)$.
\end{maintheorem}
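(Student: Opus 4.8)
The plan is to construct the abelian extension concretely by first giving an explicit presentation of the Drinfeld double $D(H)$, then exhibiting the normal Hopf subalgebra $\nucleo$ as the subalgebra generated by a carefully chosen family of primitive-like elements, and finally identifying the quotient $D(H)/D(H)\nucleo^+$ with the restricted enveloping algebra $\ugo(\mgo)$. First I would recall, following the computation of $\toba(\mathtt{V})$ in \cite{clw} summarized in \eqref{eq-jordan-plane-char2}, the structure of $H = \toba(\mathtt{V})\#\ku C_2$ as a finite-dimensional pointed Hopf algebra of dimension $32$; its dual $H^*$ is then also pointed (being generated by grouplikes and skew-primitives), and $D(H) \cong H^{*\,\mathrm{cop}} \bowtie H$ has dimension $32^2$. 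I would write down generators: the grouplike $g$ coming from $C_2 \subset H$, a dual grouplike $\gamma$ from $C_2 \subset H^*$, the skew-primitives $x_1,x_2$ from $H$, and dual skew-primitives $y_1,y_2$ from $H^*$, together with all the cross relations in $D(H)$ obtained from the Hopf pairing. This is the analogue of \cite[\S1]{ap} and \cite[\S4]{ap1} and, while lengthy, is mechanical once the pairing between $\toba(\mathtt V)$ and its dual is pinned down.

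Next I would locate $\nucleo$. In the odd-characteristic cases the kernel $\nucleouno$ (resp. $\nucleodos$) is generated by the $p$-th powers of the skew-primitive generators together with a grouplike; here $\car\ku = 2$ and the relations $x_1^2 = 0$, $x_2^4 = 0$ in \eqref{eq-jordan-plane-char2} suggest that the relevant "restricted power" in the double is a square on the $y_i$ side and a certain degree-$2$ element built from $g,\gamma$. Concretely I expect $\nucleo$ to be the subalgebra generated by $y_1^2$, $y_2^2$ (or $y_2^2$ and a suitable central combination), and $g\gamma - 1$ or $(g\gamma)^{?}$, i.e.\ those elements that are central and primitive in $D(H)$ modulo the augmentation ideal. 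One must check: (i) $\nucleo$ is a Hopf subalgebra — closed under multiplication, $\Delta$, $S$; (ii) it is commutative — because in characteristic $2$ the would-be non-commuting commutators are squares or vanish; (iii) it is local, i.e.\ has a unique maximal ideal, which follows since its generators are nilpotent (by the defining relations of $\toba(\mathtt V)^*$ and the grouplike relations), so $\nucleo$ is a finite-dimensional local commutative algebra; and (iv) it is normal in $D(H)$, i.e.\ $\ad_\ell(D(H))(\nucleo)\subseteq \nucleo$, which one verifies on the algebra generators of $D(H)$ using the cross relations — here the characteristic-$2$ identities make the adjoint action of $x_i,y_i$ land back inside $\nucleo$.

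Then I would show the quotient is $\ugo(\mgo)$. Set $\overline{D} = D(H)/D(H)\nucleo^+$. By a dimension count, $\dim \overline D = 32^2/\dim\nucleo$, which I expect to equal $2^5 = \dim\ugo(\mgo)$, so $\dim\nucleo = 2^{10-5}\cdot\,? $ — more precisely I would arrange $\dim\nucleo = 2^{5}$ so that $1024/32 = 32 = 2^5$. The images of $x_1,x_2,y_1,y_2$ and $g-1$ (or an appropriate five of the natural generators) in $\overline D$ should be primitive, hence span a restricted Lie algebra, and a direct check of their brackets and squares should reproduce exactly the relations defining $\mgo$ from \S\ref{subsection:restricted}; the PBW theorem for restricted enveloping algebras then forces $\overline D \cong \ugo(\mgo)$ by comparing dimensions. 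Finally, exactness of $\nucleo \hookrightarrow D(H)\twoheadrightarrow \ugo(\mgo)$ — faithful flatness of $D(H)$ over $\nucleo$ and $\nucleo = D(H)^{\,\mathrm{co}\,\pi}$ — follows from Schneider's theorem or the Nichols–Zoeller freeness theorem for finite-dimensional Hopf algebras, since all objects are finite-dimensional.

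The main obstacle I anticipate is twofold: producing the correct explicit presentation of $D(H)$ — the cross relations in characteristic $2$, where signs disappear but the braiding \eqref{eq:braidingsuperjordan} with $\epsilon = 1 = -1$ mixes the two generators nontrivially, require care to get the pairing right — and, more seriously, pinning down the precise five generators and relations of $\mgo$ so that the quotient map $\pi$ is well defined and an iso. Getting $\mgo$ wrong by a central element or a bracket would break either normality of $\nucleo$ or the dimension count, so the delicate point is the simultaneous bookkeeping that makes $\nucleo$ a normal commutative local Hopf subalgebra \emph{and} $D(H)/D(H)\nucleo^+$ exactly $\ugo(\mgo)$; this is where the bulk of the verification (and the characteristic-$2$ miracles) will be concentrated.
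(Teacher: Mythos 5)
Your overall skeleton is the right one and matches the paper: present $D(H)$ by generators and relations (Proposition \ref{prop:double}), exhibit a normal local commutative Hopf subalgebra $\nucleo$ of dimension $2^5$, check that $D(H)/D(H)\nucleo^{+}\simeq \ugo(\mgo)$ by comparing relations and dimensions ($2^{10}/2^5=2^5$), and get exactness from normality plus faithful flatness as in Remark \ref{exactseq}. The dimension bookkeeping you propose is also correct.

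However, your identification of $\nucleo$ is wrong, and this is the actual content of the theorem. You propose generators of the form $w_1^2$, $w_2^2$ and something like $g\gamma-1$, by analogy with the odd-characteristic kernels ``generated by $p$-th powers.'' None of these can work here: $w_1^2=0$ already in $H^{*}$ by \eqref{eq-dual-jordan-plane-char2}, so it generates nothing; $w_2^2$ maps under the intended projection to $b^2=b'\neq 0$ in $\ugo(\mgo)$, so it does not lie in $\ker\pi$; and since $\gamma\mapsto c$ and $g\mapsto 1$, an element such as $g\gamma-1$ maps to $c-1\neq 0$. The correct kernel is $\nucleo=\langle x_1,\,x_{21},\,w_1,\,w_{21},\,g\rangle$, i.e.\ it is generated by the ``extra'' PBW generators themselves (the first skew-primitives $x_1,w_1$, the degree-two elements $x_{21}=x_1x_2+x_2x_1$ and $w_{21}=w_1w_2+w_2w_1$, and the grouplike $g$), each of which satisfies $u^2=0$ (resp.\ $g^2=1$), giving $\dim\nucleo=2^5$; the surviving generators $x_2,w_2,\gamma$ map to $a,b,c$, and $\mgo$ is the $5$-dimensional minimal $2$-envelope of the simple Lie algebra $\mfG=\ku\{a,b,c\}$, with $a'=a^{[2]}$, $b'=b^{[2]}$ --- not a span of five independent skew-primitives as you suggest. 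You appear to have conflated this extension with the other one appearing in the paper, $N\hookrightarrow \widetilde{D}\twoheadrightarrow D(H)$ (Proposition \ref{middlecolumn}), whose kernel genuinely is generated by restricted powers $x_2^4$, $x_{21}^2$, $\mathbf{g}^2$, $w_2^4$, $w_{21}^2$, $\zeta^2+\zeta$; in the finite-dimensional double those powers are already zero, which is precisely why that ansatz collapses here.
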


Hence the simple $D(H)$-modules are the same as those of 
$\ugo(\mgo)$, which are known \cite{do}: the trivial module $V_0$ and a 3-dimensional module $V_1$. 
Even more the functor $\pi^*: \lmod{\ugo(\mgo)} \to \lmod{D(H)}$ is tensor and preserves 
indecomposability.
We are lead to study the representation theory of $\ugo(\mgo)$. We prove:

\begin{maintheorem}\label{mainthm:indec}
The algebra $\ugo(\mgo)$ is  tame. The  indecomposable $\ugo(\mgo)$-modules
are the simple modules $V_0$ and $V_1$, their projective covers, 8 families of string modules 
and 2 families of band modules.
\end{maintheorem}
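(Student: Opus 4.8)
The plan is to reduce the classification to the combinatorics of a string algebra, and to extract tameness from that identification rather than by a separate argument. First I would make $\ugo(\mgo)$ completely explicit starting from the presentation of $\mgo$ in \S\ref{subsection:restricted}: a PBW basis gives $\dim\ugo(\mgo)=2^{5}=32$, and $\ugo(\mgo)$ is a symmetric algebra (every finite-dimensional restricted enveloping algebra is, by Berkson--Farnsteiner) whose simple modules are, by \cite{do}, precisely the trivial module $V_0$ and the $3$-dimensional module $V_1$. Then I would compute the Jacobson radical of $\ugo(\mgo)$ and the Loewy series of the indecomposable projective covers $P_0=P(V_0)$ and $P_1=P(V_1)$; the identity $\dim P_0+3\dim P_1=32$ together with the self-duality $\topi P_i\cong\soc P_i$ constrains these strongly, and any remaining ambiguity is settled by a direct computation of radical powers.

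Next I would pass to the basic algebra $B$ Morita equivalent to $\ugo(\mgo)$ — that is, $B=e\,\ugo(\mgo)\,e$ for a complete set $e=e_0+e_1$ of representatives of the primitive idempotents — and present it as $B\cong\ku Q/I$. The Gabriel quiver $Q$ has vertex set $\{0,1\}$, with $\dim\ext^1_{\ugo(\mgo)}(V_i,V_j)$ arrows from $j$ to $i$; these numbers I would read off from $\rad B/\rad^{2}B$ (equivalently from the restricted cohomology $H^1(\mgo,\Hom(V_i,V_j))$). The admissible ideal $I$ I would then pin down from $\rad^{2}B$ and from the Loewy structure of $P_0,P_1$ found above, exhibiting an explicit minimal generating set of relations.

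The heart of the argument is to check that the bound quiver $(Q,I)$ satisfies the axioms of a special biserial algebra: at most two arrows start and at most two arrows end at every vertex; for each arrow $\alpha$ there is at most one arrow $\beta$ with $\beta\alpha\notin I$ and at most one arrow $\gamma$ with $\alpha\gamma\notin I$; and $I$ is generated by paths together with, possibly, commutativity relations of the permitted biserial form. A subtlety specific to $\car\ku=2$ that I would explicitly rule out is the appearance of a ``special loop'' (a loop behaving like an idempotent, or a square-zero element used only on one side), which would make $B$ clannish rather than string and change the combinatorics. Once $(Q,I)$ is recognized as special biserial, the theory of such algebras applies: $B$ is tame — which gives the first assertion of the theorem — and the indecomposable $B$-modules, hence via the Morita equivalence the indecomposable $\ugo(\mgo)$-modules, are exactly the non-uniserial projective-injective modules (here $P_0$ and $P_1$, which are projective-injective since $\ugo(\mgo)$ is Frobenius and non-uniserial by the computation of their Loewy series), the string modules, and the band modules (Butler--Ringel, Wald--Waschb\"usch).

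It then remains to enumerate. Running through the reduced walks in $(Q,I)$ up to inversion, I expect them to organize into $8$ infinite families $w(n)$, $n\in\N$, each obtained by inserting $n$ copies of a fixed subword (a power of an arrow, or a short cyclic piece) into a fixed prefix/suffix, besides the finitely many short walks that give $V_0$ and $V_1$; and I expect exactly $2$ cyclic reduced walks that are genuine bands up to rotation and inversion, each producing a family of band modules indexed by $(\lambda,n)\in\kut\times\N$. Collecting everything yields the stated list. I expect the two places that absorb most of the work to be (i) determining $I$ correctly — getting the Loewy series of $P_0,P_1$ and the exact relations right, since the whole classification rests on this step — and (ii) the combinatorial bookkeeping of strings and bands, in particular checking that the count $8+2$ is complete and non-redundant modulo the rotation and inversion equivalences.
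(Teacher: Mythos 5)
Your plan is correct and follows essentially the same route as the paper: identify $V_0,V_1$, compute the $8$-dimensional projective covers using $\dim P_0+3\dim P_1=32$, pass to the basic algebra $e\,\ugo(\mgo)\,e$, present it as $\ku Q/I$ on the two-vertex quiver with two arrows in each direction, verify the special biserial axioms (the ``special loop'' worry is vacuous here since $\ext^1(V_i,V_i)=0$ forces a loop-free quiver), and then enumerate the strings ($8$ families) and bands ($2$ families). The only cosmetic difference is that the paper also records tameness separately via biseriality and Crawley--Boevey before the special biserial classification, whereas you extract it directly from the string-algebra structure.
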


See Theorem \ref{thm:clasif-indec-um} for the precise description of these modules.
The paper is organized as follows. The Section \ref{section:double-restrictedJP}
contains preliminaries on restricted enveloping algebras, the presentation of $D(H)$ and the proof of 
Theorem \ref{mainthm:abelian-extension}, see Theorem \ref{teo:sequence}. 
Section \ref{sec:tame} is devoted to the basics of the representation theory
of $\ugo(\mgo)$; here we prove that $\ugo(\mgo)$ is tame
and that the associated basic algebra is special biserial.  This last property allows us 
to compute the indecomposable $\ugo(\mgo)$-modules in Section \ref{sec:indec}.
In Section \ref{sec:double-Jordan} we discuss an infinite-dimensional analogue of $D(H)$, extending results from
\cite{ap} and \cite{ap1}.

\subsection{Notations and conventions}
We denote the natural numbers by $\N$, and $\N_0=\N\cup \{0\}$. 
For $k<\ell \in\N_0$, we set $\I_{k,\ell}=\{n \in \N_0: k \leq n \leq \ell\}$ and $\I_{\ell}=\I_{1, \ell}$.
We fix an algebraically closed field $\ku$ of characteristic 2.
All vector spaces, algebras and tensor products are over $\ku$. 
If $U$ is a vector space and $S \subseteq U$, then $\ku S$ stands for the linear subspace of $U$ generated by $S$.
Given $\lambda \in \ku$, 
we denote by $\jordan_{r}(\lambda)$ the Jordan block of size $r$ associated to  $\lambda$. 

Let $\ku G$, respectively $\ku^G$, denote the group algebra of a finite group  $G$, respectively
the algebra of functions on $G$; thus  $(\ku G)^* \simeq\ku^G$. If $S \subseteq G$, 
then $\langle S\rangle$ denotes the subgroup generated by $S$. 

Given an algebra $A$, let $\lmod{A}$, respectively  $\rmod{A}$, denote  the category of  finite-dimensional left,
respectively right, $A$-modules. 
Also let $\Irr A$, respectively $\Indec A$, denote the set of isomorphism classes of  simple, respectively 
indecomposable, objects  in $\lmod{A}$.
We often denote indistinctly a class in $\Irr A$, or $\Indec A$, and one of its representatives.
The Jacobson radical of $A$ is denoted by $\Jac A$ and the radical of  $V\in \lmod{A}$ is denoted by $\rad V$.
If $S \subseteq A$, then $\langle S\rangle$ denotes the two-sided ideal generated by $S$. 

Given $L$ a Hopf algebra with  antipode  $\Ss$ (assumed always bijective), the group of  group-like elements in $L$ is denoted by 
$G(L)$. For $g,h\in G(L)$, the linear space of $(g,h)${\it -primitive  (or skew primitive)} elements
is  
$$
\cP{g}{h}{L}\coloneqq \{x\in L\,:\,\Delta(x)=  x\ot h + g\ot x\}.$$ 
As usual,  $\mathcal{P}(L) \coloneqq  \cP{1}{1}{L}$. 
The category of Yetter-Drinfeld modules over $L$ is denoted by $\yd{L} $, see e.~g. \cite[\S 11.6]{Ra}.
We refer to \cite{Ra} for unexplained terminology.

\section{The Drinfeld double of the  restricted Jordan plane}\label{section:double-restrictedJP}

\subsection{The restricted Lie algebra \texorpdfstring{$\mgo$}{}} \label{subsection:restricted} 
Recall that a {\it restricted Lie algebra } (over our field $\ku$ of characteristic 2)
is a pair $(\mfL, (\,\,)^{[2]})$, where $\mfL$ is a Lie algebra 
and $(\,\,)^{[2]}:\mfL\to \mfL$ is a map, called the {\it $2$-operation}, that satisfies
\begin{align*}
(\lambda x)^{[2]} &= \lambda^2 x^{[2]}, &
\ad_{x^{[2]}} &= (\ad_x)^2, &
(x+y)^{[2]} &= x^{[2]}+y^{[2]}+ [x,y],
\end{align*}
for all $x,y\in \mfL$, $\lambda\in \ku$.     
The {\it restricted enveloping algebra} of  $(\mfL, (\,\,)^{[2]})$ is 
\[\mathfrak{u}(\mfL)\coloneqq  U(\mfL)/\langle x^2-x^{[2]}\,:\,x\in \mfL\rangle,\]
where $U(\mfL)$ is the  enveloping algebra of $\mfL$.
The Hopf algebra $\mathfrak{u}(\mfL)$ is pointed  and involutory (that is, $\Ss^2=\id$), thus it is spherical.

Given a Lie subalgebra $\mathfrak{e}$  of $\mfL$, the smallest restricted Lie subalgebra of $\mfL$ containing $\mathfrak{e}$ is denoted by $\mathfrak{e}_{[2]}$. 

Let $\mathfrak{l}$ be a Lie algebra. A {\it $2$-envelope of $\mathfrak{l}$} is 
a triple $(\mfL, (\,\,)^{[2]},\iota)$ consisting of a restricted Lie algebra $(\mfL, (\,\,)^{[2]})$ and an injective Lie algebra map $\iota:\mathfrak{l}\to \mfL$ such that $\iota(\mathfrak{l})_{[2]}=\mfL$. 
A $2$-envelope is  {\it minimal} if the center $Z(\mfL)$ of $\mfL$ is contained in $Z(\iota(\mathfrak{l}))$. 
For any Lie algebra there exists a  minimal $2$-envelope unique up to Lie algebra isomorphism; see \cite[Theorem 1.1.6]{str}.

\medbreak
Let $\mfG\coloneqq W(1,\underline{2})^{'}$ be the derived Lie algebra of the $4$-dimensional  Witt Lie algebra,
see \cite{s}. The notation $\mfG$ is taken from \cite{cushingetal}  where this Lie algebra  is called the fake 
$\mathfrak{sl}(2)$. 
Then $\mfG$ has a basis $\{a,b,c\}$ and bracket
\begin{align}\label{lie-bracket}
&[a,b]=c,& &[a,c]=a,& &[b,c]=b.&
\end{align}
It is known that $\mfG$ is the unique, up to isomorphism, simple Lie algebra of dimension $3$ \cite{str}. The center of 
$U(\mfG)$ and $\Irr U(\mfG)$ were computed in \cite{do}. 

\medbreak
The Lie algebra $\mfG$ is not restricted. The minimal $2$-envelope of $\mfG$ is the Lie algebra $\mgo$ with basis $\{b',b,c,a,a'\}$, bracket
\eqref{lie-bracket} and
\begin{align*}
&\begin{aligned}
&[a',b]=a,&  &[a',b']=c, & &[a,b']=b,&  
&[a',a]=[a',c]=[b',b]=[b',c]=0;
\end{aligned}
\end{align*}
and $2$-operation $(\,\,)^{[2]}:\mgo\to \mgo$ given by
\begin{align*}
&(a')^{[2]}=(b')^{[2]}=0,& &c^{[2]}=c,& &a^{[2]}=a',& &b^{[2]}=b'.&
\end{align*}

\begin{remark} \label{rem:adjoint-representation}
$\mathfrak{u}(\mgo) \simeq \ku\langle a,b,c\rangle / I$ where $I$ is generated by the relations
\begin{align}\label{rel-u(L)}
&ab+ba=c,\!& &ac+ca=a,\!& &bc+cb=b,\!& &a^4=b^4=0,\!&&c^2+c=0.&
\end{align}
That is,  
\begin{align}\label{eq:u(m)-iso}
\mathfrak{u}(\mgo) \simeq U(\mfG) / \langle a^4, b^4, c^2+c\rangle.
\end{align} 

Moreover, the adjoint representation $\ad: U(\mfG)  \to \End \mfG$ factorizes through $\mathfrak{u}(\mgo) $.
One has $\mfG = [\mgo, \mgo]$; in fact the restricted Lie algebra $\mgo$ is isomorphic to $\Der(\mfG)$.
The automorphism group of the restricted Lie algebra  $\mgo$  is isomorphic to $SL(2, \ku)$ via
\begin{align*}
&SL(2, \ku) \ni g= \left(\begin{matrix}
\kappa & \lambda \\  \mu & \zeta
\end{matrix}\right) \mapsto \varphi_g \in \Aut (\mgo), \\
&\begin{aligned}
\varphi_g (a) &= \kappa a + \lambda b, &
\varphi_g (b) &= \mu a + \zeta b, & 
\varphi_g (c) &= c, \\
\varphi_g (a') &= \kappa^2 a' + \lambda^2 b' +\kappa \lambda  c, &
\varphi_g (b') &= \mu^2 a' + \zeta^2 b' + \mu \zeta c.
\end{aligned}
\end{align*}

\end{remark}

For later use we collect  some consequences of the defining relations.

\begin{lemma}\label{relbasic}
The following relations hold in $\mathfrak{u}(\mgo)$:
\begin{align*}
& ba = ab+c, & & ba^2 = a^2b+a, & & ba^3 = a^3b+a^2\overline{c}  \\
& b^2a =ab^2+b, & & b^2a^2 =a^2b^2+c, & & b^2a^3 = a^3b^2+a^2b+a\overline{c},\\
&b^3a = ab^3+b^2\overline{c}, & & b^3a^2 = a^2b^3+ab^2+bc, & & b^3a^3 = a^3b^3+a^2b^2c+abc,\\
&ca=ac+a, &&  ca^2 =a^2c, && ca^3 =a^3\overline{c}, \\
&cb=bc+b, &  & cb^2=b^2c, & & cb^3 = b^3\overline{c}, 
\end{align*}
where $\overline{c}=1+c$.\qed
\end{lemma}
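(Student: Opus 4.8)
The plan is to verify each of the listed relations by direct computation inside $\mathfrak{u}(\mgo) \simeq \ku\langle a,b,c\rangle / I$, working from the defining relations \eqref{rel-u(L)}. The strategy is to organize the computation so that the relations are derived in a logically dependent order, each new identity being proved from \eqref{rel-u(L)} together with the relations already established, so that no circular reasoning occurs and so that the bookkeeping stays manageable.

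First I would record the three families of \emph{straightening rules} that govern how $c$ and the two generators $a, b$ interact: from $ac+ca=a$ and $c^2=c$ (which, in characteristic $2$, is the relation $c^2+c=0$) one gets $ca = ac + a$ and hence by induction $ca^n = a^n c + n a^n$, i.e.\ $ca^2 = a^2 c$, $ca^3 = a^3 c + a^3 = a^3\overline{c}$ (using $\overline{c}=1+c$ and $\car\ku=2$); symmetrically $cb^n$. These are the last two rows. Next I would establish the mixed relations in the order $ba$, then $ba^2, ba^3$, then $b^2a$, $b^2a^2$, $b^2a^3$, then $b^3a$, $b^3a^2$, $b^3a^3$. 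The seed is $ba = ab + c$, immediate from $ab+ba=c$. For $ba^2$ one writes $ba\cdot a = (ab+c)a = a(ba) + ca = a(ab+c) + (ac+a) = a^2b + ac + ac + a = a^2 b + a$, using $ca=ac+a$ and $\car\ku = 2$. The higher cases follow the same pattern: push the single $b$ (or the $b^2$, $b^3$) past one factor of $a$ at a time, each time picking up a lower-order correction term, then simplify those correction terms using the already-proved lower relations and the $c$-straightening rules; the $\overline{c}$ that appears in $ba^3$, $b^2a^3$, $b^3a$, $b^3a^3$ is exactly the $a^3 c + a^3$ type collapse noted above. For the $b^2$ and $b^3$ rows one can either iterate $ba^k$ (multiply by $b$ on the left and re-straighten) or iterate in the $a$-direction; I would do whichever keeps the correction terms shortest, checking consistency between the two routes on at least the $b^3a^3$ case as a sanity check.

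The main obstacle — though it is bookkeeping rather than conceptual — is controlling the accumulation of lower-order terms in the degree-$6$ relation $b^3a^3 = a^3b^3 + a^2b^2c + abc$: each transposition of a $b$ past an $a$ spawns a $c$-term, each transposition of $c$ past an $a$ or $b$ spawns a further lower term, and in characteristic $2$ many of these cancel in pairs, so the arithmetic mod $2$ of the multiplicities must be tracked carefully. Since all identities are homogeneous (under the grading $\deg a = -\deg b$, $\deg c = 0$, which is respected by \eqref{rel-u(L)} after the restriction relations — or more simply by total degree in $a,b$ with $c$ weight-zero after accounting), I would use degree considerations to bound which monomials can possibly occur on the right-hand side, which sharply limits the search and makes the final mod-$2$ coefficient comparison routine. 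No result beyond Remark \ref{rem:adjoint-representation} and elementary induction is needed, so the proof is a finite check; the write-up can legitimately be compressed to ``direct computation from \eqref{rel-u(L)}'', which is why the statement is tagged \qed.
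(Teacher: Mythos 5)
Your proposal is correct and matches the paper's (implicit) approach: the lemma carries a \qed with no written proof precisely because it is the direct computation from \eqref{rel-u(L)} that you describe, straightening $c$ past powers of $a$ and $b$ first and then pushing $b$, $b^2$, $b^3$ past powers of $a$ one factor at a time, with cancellations mod $2$. I spot-checked your sample computations (e.g.\ $ba^2$, $b^2a^3$, $b^3a^3$) and they come out exactly as stated, so the write-up is fine as is.
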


\subsection{ The bosonization of the restricted Jordan plane}
Let $\Gamma=\langle g\rangle$ be the cyclic group of order $2$, written multiplicatively. 
Let $V \in \yd{\ku \Gamma}$ with a basis $\{x_1,x_2\}$, action  $\cdot$ and coaction $\delta$ given by
\begin{align}\label{YD-structure}
g\cdot x_1=x_1,& &g\cdot x_2=x_1+x_2,& &\delta(x_i)=g\otimes x_i,& &i \in \I_2.
\end{align}
Thus $V\simeq \cV(1,2)$ as braided vector spaces.
As said,
$\toba(V)$  is presented by generators $x_1,x_2$ and relations \eqref{eq-jordan-plane-char2}, see \cite{clw}.
Let $x_{21}\coloneqq x_1x_2+x_2x_1 \in\toba(V)$. Then $\toba(V)$ has a PBW-basis 
\[\{x_1^{m_1}x_{21}^{m_2}x_2^{n}\,:\,m_i\in\I_{0,1},\,i \in \I_2, \, n\in \I_{0,3}\},\]
and $\dim \toba(V)= 2^4$.
As in \cite{ap}, $\toba(V)$ is called {\it the restricted Jordan plane}. 

\begin{lemma}\cite[Corollary 3.4]{clw}\label{lem-boso-restricted}
The bosonization $H\coloneqq \toba(V)\#\ku\Gamma$ is a pointed Hopf algebra of dimension $2^5$ generated by $x_1,x_2,g$ with relations \eqref{eq-jordan-plane-char2} and
\begin{align} \label{rel-boson}
&gx_1=x_1g,& &gx_2=x_2g+x_1g,& &g^2=1.&
\end{align}
The coproduct of $H$ is given by
\begin{align}\label{for-copro-boson}
&\Delta(g)=g\otimes g,& \hspace{20pt} &\Delta(x_i)=x_i\otimes 1+g\otimes x_i,& &i\in \I_2.& &\hspace{30pt} &\qed
\end{align}
\end{lemma}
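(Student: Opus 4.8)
The plan is to obtain $H$ as a Radford biproduct (bosonization) and then read off the presentation and the coalgebra structure directly from the description of $\toba(V)$ recalled above. Since $\toba(V)$ is a Hopf algebra in the braided category $\yd{\ku\Gamma}$, the biproduct $H=\toba(V)\#\ku\Gamma$ is an ordinary Hopf algebra whose underlying vector space is $\toba(V)\otimes\ku\Gamma$, so $\dim H=2^4\cdot 2=2^5$. Inside $H$ we have the subalgebra $\toba(V)\#1$ and the group algebra $1\#\Gamma$; put $x_i\coloneqq x_i\#1$ ($i\in\I_2$) and $g\coloneqq 1\#g$. The smash product rule is $g(r\#1)=(g\cdot r)\#g$ for $r\in\toba(V)$, and the smash coproduct of $r\#1$ is built from $\Delta_{\toba(V)}(r)$ twisted by the coaction $\delta$, while $1\#g$ is grouplike.

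First I would derive the relations. The identities \eqref{eq-jordan-plane-char2} hold in $\toba(V)\#1$ because they hold in $\toba(V)$. Since $\Gamma$ has order $2$ we get $g^2=1$, and the smash product rule together with the Yetter--Drinfeld action \eqref{YD-structure} gives $gx_1=(g\cdot x_1)g=x_1g$ and $gx_2=(g\cdot x_2)g=x_2g+x_1g$, that is, \eqref{rel-boson}. For the coproduct, $\Delta(g)=g\otimes g$ is clear; and since $x_i$ is primitive in $\toba(V)$ and $\delta(x_i)=g\otimes x_i$, the smash coproduct formula yields $\Delta(x_i)=x_i\otimes 1+g\otimes x_i$, which is \eqref{for-copro-boson}. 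Hence there is a surjective Hopf algebra map $A\twoheadrightarrow H$, where $A$ is the Hopf algebra presented by generators $x_1,x_2,g$, relations \eqref{eq-jordan-plane-char2} and \eqref{rel-boson}, and coproduct \eqref{for-copro-boson}.

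Second I would show this surjection is an isomorphism by a dimension count. Using \eqref{rel-boson} one moves every occurrence of $g$ to the right of any word in $x_1,x_2$ and uses $g^2=1$, so $A$ is spanned by $\{w,\,wg\}$ with $w$ a word in $x_1,x_2$; moreover the subalgebra of $A$ generated by $x_1,x_2$ is a quotient of $\toba(V)$, as it satisfies \eqref{eq-jordan-plane-char2}, hence has dimension at most $2^4$. Therefore $\dim A\leq 2^5=\dim H$, and surjectivity of $A\twoheadrightarrow H$ forces $A\simeq H$. Finally, $H$ is pointed: the grading of $\toba(V)$ (with $g$ placed in degree $0$) makes $H$ a graded Hopf algebra, so the ascending filtration by degree is a coalgebra filtration whose $0$-th term $\ku\Gamma$ is cosemisimple; hence the coradical of $H$ is contained in $\ku\Gamma$ and equals $\ku\Gamma=\ku G(H)$. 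The routine verifications (that \eqref{YD-structure} reproduces \eqref{rel-boson} and that the coaction makes each $x_i$ a $(1,g)$-primitive) are immediate from the definitions; the only substantive ingredient, and thus the main obstacle, is the equality $\dim\toba(V)=2^4$ together with its presentation \eqref{eq-jordan-plane-char2}, which is exactly what \cite{clw} supplies and what makes the dimension count tight.
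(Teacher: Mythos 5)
Your argument is correct. The paper itself offers no proof of this lemma---it simply cites \cite[Corollary 3.4]{clw}---and what you have written is precisely the standard bosonization argument that the citation stands in for: read off the relations \eqref{rel-boson} and the coproduct \eqref{for-copro-boson} from the smash (co)product rules and the Yetter--Drinfeld structure \eqref{YD-structure}, then make the presentation tight by comparing $\dim H = 2\cdot\dim\toba(V)=2^5$ with the upper bound on the presented algebra coming from the known presentation and dimension of $\toba(V)$ in \cite{clw}, and deduce pointedness from the grading with degree-zero part $\ku\Gamma$. One cosmetic remark: you need not assert in advance that the presented object $A$ carries a Hopf algebra structure (which would require checking that the relation ideal is a bialgebra ideal); the universal property gives an \emph{algebra} surjection $A\twoheadrightarrow H$, the dimension count shows it is bijective, and the Hopf structure is then transported from $H$.
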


\subsection{The double}\label{subsection:rest-JP} 
We proceed to present  the Drinfeld double $D(H)$  by ge\-ne\-ra\-tors and relations. To fix the notation we recall that
the Drinfeld double $D(L)$ of a finite-dimensional Hopf algebra $L$ is the coalgebra  $L\otimes L^{\ast\,\rm{op}}$ 
with the multiplication and the antipode of $D(L)$  given respectively by
\begin{align*}
&(h\bowtie f)(h'\bowtie f')=\langle f_{(1)},h'_{(1)}\rangle\langle f_{(3)},\Ss(h'_{(3)})\rangle(hh'_{(2)}\bowtie f'f_{(2)}),\\
&\Ss(h\bowtie f)=((1\bowtie \Ss^{-1}(f))(\Ss(h)\bowtie \varepsilon)),
\end{align*} 
where the multiplication $ff'$ is considered in $L^{\ast}$ rather than in $L^{\ast\,\rm{op}}$. 
Here,  given  $h\in L$ and $f\in L^{\ast}$, the element $h\otimes f$ is denoted by $h\bowtie f\in D(L)$, 
while $\langle f,h\rangle$ is the evaluation of $f$ in $h$. 

\medbreak
We first deal with $H^*\simeq\toba(V^{\ast})\#\ku^{\Gamma}$. 
Let $w_1,w_2\in H^{\ast}$ be defined by
\begin{align*}
&\begin{aligned}
&w_1(x_1^{m_1}x_{21}^{m_2}x_2^{n}g^{m_3})= \delta_{m_1,0}\delta_{m_2,0}\delta_{m_3,0}\delta_{n,1},\\ &w_2(x_1^{m_1}x_{21}^{m_2}x_2^{n}g^{m_3})=\delta_{m_1,1}\delta_{m_2,0}\delta_{m_3,0}\delta_{n,0},&
\end{aligned}
& m_i &\in \I_{0,1}, & i &\in \I_{3},& n &\in \I_{0,3}.
\end{align*}
\medbreak

Let $\gamma\in \ku^{\Gamma}$, $\gamma(g^{m})=\delta_{m,1}$, $m \in \I_{0,1}$, thus $\ku^{\Gamma} \simeq \ku[\gamma]/(\gamma^2 + \gamma)$. 
By \cite[Lemma 2.1]{ap1}, $V^{\ast} \in \yd{\ku^{\Gamma}}$ with basis $\{w_i: i \in \I_2\}$, action 
and coaction  given respectively by  
\begin{align}\label{struc-yd-dual}
&\gamma \cdot w_i=w_i,& i \in \I_2,& &\delta(w_1)=1\otimes w_1,& &\delta(w_2)=1\otimes w_2+\gamma\otimes w_1.
\end{align} 

\begin{lemma} \label{lem:nichols-dual}
The algebra $\toba(V^{\ast})$ is generated by $w_1,w_2$ with relations
\begin{align} \label{eq-dual-jordan-plane-char2}
&w_1^2,&  &w_2^4,& &w_2^2w_1+w_1w_2^2+w_1w_2w_1,& &w_1w_2w_1w_2+w_2w_1w_2w_1.&
\end{align}
If $w_{21}\coloneqq w_1w_2+w_2w_1 \in\toba(V^{\ast})$, then $\toba(V^{\ast})$ has a PBW-basis 
\[\{w_1^{m_1}w_{21}^{m_2}w_2^{n}\,:\,m_i\in\I_{0,1},\, i \in \I_2, \, n\in \I_{0,3}\}.\]
\end{lemma}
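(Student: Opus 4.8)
The plan is to transport the presentation of $\toba(V)$ from \eqref{eq-jordan-plane-char2} to $\toba(V^\ast)$ by exploiting the duality. Since $\toba(V^\ast)$ is by definition the Nichols algebra of the Yetter--Drinfeld module $V^\ast \in \yd{\ku^\Gamma}$ described in \eqref{struc-yd-dual}, the first step is to identify the braided vector space underlying $V^\ast$. A direct computation from \eqref{struc-yd-dual} shows that the braiding $\mathfrak c^\ast$ on $V^\ast$ satisfies $\mathfrak c^\ast(w_i\otimes w_1)=w_1\otimes w_i$ and $\mathfrak c^\ast(w_i\otimes w_2)=(w_2+w_1)\otimes w_i$ for $i\in\I_2$; comparing with \eqref{eq:braidingsuperjordan} for $\epsilon=1$, $\ell=2$, this is exactly $\mathfrak c_1$, so $V^\ast \simeq \cV(1,2) \simeq \mathtt V$ as braided vector spaces. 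Hence $\toba(V^\ast) \simeq \toba(\mathtt V) = \toba(V)$ as braided Hopf algebras, and the presentation \eqref{eq-jordan-plane-char2} of $\toba(\mathtt V)$ from \cite{clw} translates verbatim, with $x_i$ replaced by $w_i$, into the relations \eqref{eq-dual-jordan-plane-char2}. The PBW-basis statement transports the same way: setting $w_{21} := w_1w_2 + w_2 w_1$ corresponds under the isomorphism to $x_{21}$, and the PBW-basis $\{x_1^{m_1}x_{21}^{m_2}x_2^n\}$ of $\toba(V)$ recorded before Lemma \ref{lem-boso-restricted} yields the asserted basis $\{w_1^{m_1}w_{21}^{m_2}w_2^n : m_i \in \I_{0,1},\ n\in\I_{0,3}\}$ of $\toba(V^\ast)$.

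Alternatively, and perhaps more in keeping with the bookkeeping needed later, one can argue via the explicit duality pairing. The pairing $\langle\,,\rangle : \toba(V^\ast)\times \toba(V) \to \ku$ induced from the Hopf pairing $H^\ast \times H \to \ku$ is nondegenerate and identifies $\toba(V^\ast)$ with the graded dual of $\toba(V)$. One then checks that each element listed in \eqref{eq-dual-jordan-plane-char2} pairs trivially with all of $\toba(V)$ (it suffices to test against the PBW-basis of $\toba(V)$, using that the pairing is determined by $\langle w_i, x_j\rangle = \delta_{ij}$ together with compatibility with products and coproducts), hence vanishes in $\toba(V^\ast)$; this shows the quadratic and quartic relations hold. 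Then a dimension count — the algebra $A$ presented by $w_1,w_2$ modulo \eqref{eq-dual-jordan-plane-char2} is spanned by the $16$ monomials $w_1^{m_1}w_{21}^{m_2}w_2^n$, exactly as for $\toba(V)$, while $\dim\toba(V^\ast) = \dim\toba(V) = 16$ — forces the surjection $A \twoheadrightarrow \toba(V^\ast)$ to be an isomorphism, giving both the presentation and the PBW-basis simultaneously.

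The main obstacle is the verification that the spanning set $\{w_1^{m_1}w_{21}^{m_2}w_2^n\}$ actually spans $A$: one must rewrite arbitrary words in $w_1, w_2$ into this normal form using only the relations \eqref{eq-dual-jordan-plane-char2}, i.e.\ establish the diamond-lemma-type reductions (moving $w_1$'s to the left, reducing powers $w_2^4$ and $w_1^2$, and handling the straightening of $w_2 w_1$ into $w_{21} - w_1 w_2$ consistently with the last relation). This is the same combinatorial core already carried out for $\toba(V)$ in \cite{clw}, so in practice I would simply invoke the braided-Hopf-algebra isomorphism $\toba(V^\ast)\simeq\toba(V)$ from the first paragraph and let the PBW-basis and relations transport along it, rather than redo the reductions by hand. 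The duality-pairing route is then only needed if one wants the identification to be compatible with the specific functionals $w_1, w_2 \in H^\ast$ fixed above, which will matter when presenting $D(H)$.
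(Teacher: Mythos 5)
There is a genuine gap in your first (and primary) route: the computation of the braiding on $V^{\ast}$ is wrong. With the structure \eqref{struc-yd-dual} and the categorical braiding $c(u\otimes v)=u_{(-1)}\cdot v\otimes u_{(0)}$ of $\yd{\ku^{\Gamma}}$, one finds $c^{\ast}(w_1\otimes w_j)=w_j\otimes w_1$ and $c^{\ast}(w_2\otimes w_j)=w_j\otimes (w_1+w_2)$: here the nontrivial information sits in the \emph{coaction of the left tensorand} (the action of $\gamma$ being trivial), not in an action on the right tensorand as in \eqref{YD-structure}. In particular $c^{\ast}(w_2\otimes w_1)=w_1\otimes(w_1+w_2)$, not $w_1\otimes w_2$ as your formula asserts. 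Writing $T(w_1)=w_1$, $T(w_2)=w_1+w_2$ and $\tau$ for the flip, the braiding of $V$ is $(T\otimes\id)\circ\tau$ while that of $V^{\ast}$ is $(\id\otimes T)\circ\tau$; since $T^{-1}=T$ in characteristic $2$, these are $\mathfrak c$ and $\mathfrak c^{-1}$, and they are \emph{not} conjugate by any map of the form $f\otimes f$ (the required identity $fTv\otimes fu=fv\otimes Tfu$ for all $u,v$ would force $T$ to be a scalar multiple of the identity, which it is not). So $(V^{\ast},c^{\ast})\simeq(V,\mathfrak c^{-1})\not\simeq(V,\mathfrak c)$ as braided vector spaces, and the presentation of $\toba(V)$ cannot be transported ``verbatim'' along a braided-vector-space isomorphism. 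The paper's proof records precisely the correct identification $(V^{\ast},c^{\ast})\simeq(V,\mathfrak c^{-1})$ and then invokes \cite[Lemma 1.11]{ahs}, which is the nontrivial input relating the defining ideals of $\toba(V,\mathfrak c)$ and $\toba(V,\mathfrak c^{-1})$ (equivalently, of $\toba(V)$ and its graded dual); that step cannot be skipped.

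Your second route (pair the candidate relations against $\toba(V)$ via the nondegenerate graded pairing, then compare dimensions) is sound in outline and is essentially how the duality lemma is proved, but as written it is not a proof: the vanishing of the pairings is asserted rather than checked, and the crucial spanning statement for the algebra presented by \eqref{eq-dual-jordan-plane-char2} is, by your own final paragraph, deferred back to the braided-vector-space isomorphism of the first paragraph --- which is exactly the false step. Either carry out the pairing and rewriting computations in full, or correct the identification to $(V,\mathfrak c^{-1})$ and cite \cite[Lemma 1.11]{ahs} as the paper does.
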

\pf
It is easy to see that $(V^{\ast},c^{\ast}) \simeq (V,c^{-1})$. Then
$\toba(V^{\ast})$ has the same relations of $\toba(V)$ with $w_i$ instead of $x_i$, by \cite[Lemma 1.11]{ahs}.
\epf

\begin{lemma}\label{lem:hopf-dual} (See \cite[Proposition 2.2]{ap1}). The algebra
$H^{\ast}\simeq \toba(V^{\ast})\#\ku^{\Gamma}$ is generated by $w_1,w_2,\gamma$ with relations \eqref{eq-dual-jordan-plane-char2} and
\begin{align}\label{gen-dual-boson}
w_i\gamma=\gamma w_i+w_i,\quad i \in \I_2, & &\gamma^2=\gamma.
\end{align}
The coproduct of $H^{\ast}$ is given by
\begin{align}\label{for-copro-dual-boson}
&w_1, \gamma \in \mathcal P(H^{\ast}), &
&\Delta(w_2)=w_2\otimes 1+1\otimes w_2+\gamma\otimes w_1. \qed
\end{align}
\end{lemma}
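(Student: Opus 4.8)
The strategy is to use the standard principle that the Hopf dual of a bosonization is again a bosonization. Since $H=\toba(V)\#\ku\Gamma$ with $\toba(V)$ a finite-dimensional braided Hopf algebra in $\yd{\ku\Gamma}$, there is a Hopf algebra isomorphism $H^{\ast}\simeq\toba(V)^{\ast}\#\ku^{\Gamma}$, where $\toba(V)^{\ast}$ carries its canonical braided Hopf algebra structure in $\yd{\ku^{\Gamma}}$ obtained by transposing the action and coaction in \eqref{YD-structure}; no opposite/co-opposite twist intervenes because $\ku^{\Gamma}=(\ku\Gamma)^{\ast}$ is commutative and cocommutative, $\Gamma$ being abelian. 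I would first record this isomorphism exactly as in \cite[Lemma 2.1, Proposition 2.2]{ap1}, and then identify $\toba(V)^{\ast}$ with $\toba(V^{\ast})$: by the proof of Lemma \ref{lem:nichols-dual} one has $(V^{\ast},c^{\ast})\simeq(V,c^{-1})$, so $\toba(V)^{\ast}\simeq\toba(V^{\ast})$, and the transposed Yetter-Drinfeld structure on $V^{\ast}$ is precisely \eqref{struc-yd-dual}. Under this identification the functionals $w_1,w_2\in H^{\ast}$ fixed above span the copy of $V^{\ast}$ sitting inside $H^{\ast}$.

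The presentation is then read off from the smash product. Relations \eqref{eq-dual-jordan-plane-char2} are just the defining relations of $\toba(V^{\ast})$ provided by Lemma \ref{lem:nichols-dual}; the relation $\gamma^2=\gamma$ holds because $\gamma=\delta_g\in\ku^{\Gamma}$ is idempotent, equivalently $\ku^{\Gamma}\simeq\ku[\gamma]/(\gamma^2+\gamma)$. The cross relations $w_i\gamma=\gamma w_i+w_i$ come from computing $(1\#\gamma)(w_i\#1)=(\gamma_{(1)}\cdot w_i)\#\gamma_{(2)}$ in $\toba(V^{\ast})\#\ku^{\Gamma}$: using $\Delta_{\ku^{\Gamma}}(\gamma)=\gamma\otimes 1+1\otimes\gamma$ (which holds since $\car\ku=2$) together with $\gamma\cdot w_i=w_i$ from \eqref{struc-yd-dual}, this product equals $w_i\#1+w_i\#\gamma$, i.e.\ $\gamma w_i=w_i+w_i\gamma$. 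That there are no further relations follows from a dimension count, the PBW basis of $\toba(V^{\ast})$ in Lemma \ref{lem:nichols-dual} giving $\dim\toba(V^{\ast})\cdot|\Gamma|=2^5=\dim H^{\ast}$.

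For the comultiplication I would apply the bosonization formula
\[\Delta(r\#h)=\bigl(r^{(1)}\#(r^{(2)})_{(-1)}\,h_{(1)}\bigr)\otimes\bigl((r^{(2)})_{(0)}\#h_{(2)}\bigr),\]
where $r^{(1)}\otimes r^{(2)}$ is the braided coproduct of $r\in\toba(V^{\ast})$ and $r_{(-1)}\otimes r_{(0)}$ its $\ku^{\Gamma}$-coaction. Taking $h=1$ and $r=w_1$ (resp.\ $r=w_2$), which is braided-primitive in $\toba(V^{\ast})$, and inserting $\delta(w_1)=1\otimes w_1$ (resp.\ $\delta(w_2)=1\otimes w_2+\gamma\otimes w_1$) from \eqref{struc-yd-dual}, one obtains $w_1\in\mathcal{P}(H^{\ast})$ and $\Delta(w_2)=w_2\otimes 1+1\otimes w_2+\gamma\otimes w_1$; moreover $\gamma\in\mathcal{P}(H^{\ast})$ since on $\ku^{\Gamma}$ its coproduct is the one just used. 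This yields \eqref{for-copro-dual-boson}. The only delicate point — bookkeeping rather than a genuine obstacle — is fixing all the conventions (left/right Yetter-Drinfeld modules, the opposite/co-opposite normalization of $H^{\ast}$, the exact shape of the smash-product coproduct) so that \eqref{struc-yd-dual} is literally the transpose of \eqref{YD-structure} and the relations and coproduct emerge in the stated form; once this is settled, the proof is a transcription of \cite[Proposition 2.2]{ap1} to characteristic $2$.
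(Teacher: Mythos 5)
Your proposal is correct and follows exactly the route the paper intends: the paper offers no argument beyond the citation of \cite[Proposition 2.2]{ap1}, and your dual-bosonization computation (identifying $\toba(V)^{\ast}\simeq\toba(V^{\ast})$ via Lemma \ref{lem:nichols-dual}, reading off the cross relations and the coproduct from the smash-product structure, and closing with the dimension count $\dim\toba(V^{\ast})\cdot|\Gamma|=2^5$) is precisely the transcription of that reference to characteristic $2$ that the authors have in mind.
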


Clearly $D(\ku \Gamma)$ is generated by  $g$, $\gamma $ with relations
\begin{align}\label{double:group}
g^2=1,& &\gamma^2=\gamma,& &\gamma g= g\gamma.
\end{align}
We can then present the Drinfeld double  $D(H)$ by generators and relations.

\begin{prop}\label{prop:double}
The algebra $D(H)$ is   generated by $x_1,x_2,g,w_1,w_2,\gamma$ with relations \eqref{eq-jordan-plane-char2}, \eqref{rel-boson}, \eqref{eq-dual-jordan-plane-char2}, \eqref{gen-dual-boson}, \eqref{double:group} and 
\begin{align*}
	& w_1x_1=x_1w_1,& 	& w_2x_1=x_1(w_1+w_2) + 1+g,& &\gamma x_1=x_1\gamma+x_1,&  \\
	& w_1x_2=x_2w_1+1+g,&  & w_2x_2=x_2(w_2+w_1)+g\gamma,& &\gamma x_2=x_2\gamma+x_2,&\\
	& w_1g=gw_1,& & w_2g=g(w_1+w_2).
\end{align*}

The coproduct of $D(H)$ is determined by \eqref{for-copro-boson} and \eqref{for-copro-dual-boson}. The algebra $D(H)$ has dimension $2^{10}$ and has a PBW-basis 
$$\{x_1^{m_1}x_{21}^{m_2} x_2^{n_1}g^{m_3}\gamma^{m_4} w_1^{m_5} w_{21}^{m_6}w_2^{n_2}\,: \, m_i \in \I_{0,1},\, i \in \I_6,\, n_j \in \I_{0,3}, \,\, j \in \I_2\}.$$
\end{prop}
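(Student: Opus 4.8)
The plan is to verify that the six generators $x_1,x_2,g,w_1,w_2,\gamma$ together with the listed relations give a presentation of $D(H)$ by the standard ``upper bound meets lower bound'' argument. First I would recall that as a coalgebra $D(H) = H \otimes (H^*)^{\mathrm{op}}$, and that the natural maps $H \to D(H)$, $h \mapsto h \bowtie \varepsilon$, and $H^{*\,\mathrm{op}} \to D(H)$, $f \mapsto 1 \bowtie f$, are injective algebra morphisms whose images generate $D(H)$. Using Lemma \ref{lem-boso-restricted} this shows $D(H)$ is generated by $x_1,x_2,g$ (satisfying \eqref{eq-jordan-plane-char2}, \eqref{rel-boson}) and, using Lemma \ref{lem:hopf-dual} together with the fact that passing from $H^*$ to $H^{*\,\mathrm{op}}$ reverses the order in the products, by $w_1,w_2,\gamma$ (satisfying \eqref{eq-dual-jordan-plane-char2}, \eqref{gen-dual-boson}; one must be slightly careful that the relations are stable under reversal, which they are here since $w_1^2, w_2^4$ are symmetric and the remaining two relations in \eqref{eq-dual-jordan-plane-char2} are palindromic or get replaced by equivalent ones). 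The relations \eqref{double:group} among $g,\gamma$ come from the Hopf subalgebra $D(\ku\Gamma) \hookrightarrow D(H)$.

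Next I would compute the remaining cross relations, i.e. the straightening rules expressing $w_i \cdot (x_j\bowtie\varepsilon)$ and $w_i\cdot (g\bowtie\varepsilon)$, $\gamma\cdot(x_i\bowtie\varepsilon)$ as elements of $D(H)$ in the claimed normal form. For this one uses the multiplication formula in $D(H)$, namely $(1\bowtie f)(h\bowtie\varepsilon) = \langle f_{(1)}, h_{(1)}\rangle\,\langle f_{(3)}, \Ss(h_{(3)})\rangle\,(h_{(2)} \bowtie f_{(2)})$, which for $h$ a skew-primitive or group-like element and $f\in\{w_1,w_2,\gamma\}$ reduces to a short calculation once the coproducts \eqref{for-copro-boson}, \eqref{for-copro-dual-boson} and the pairing values $\langle w_i, x_j\rangle$, $\langle w_i, g\rangle$, $\langle\gamma, x_j\rangle$, $\langle\gamma,g\rangle$ are known from the definitions of $w_1,w_2,\gamma$. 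This is routine but must be done carefully in characteristic $2$; the output is precisely the displayed list of six cross relations (the appearance of $1+g$ and $g\gamma$ on the right-hand sides is exactly what one expects from the $\langle f_{(3)},\Ss(h_{(3)})\rangle$ term, and matches \cite[Proposition 2.2]{ap1} and its odd-characteristic analogue). The coproduct formulas on the generators are inherited verbatim from $H$ and $H^*$ since the embeddings are coalgebra maps.

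Let $A$ denote the abstract algebra on these six generators modulo all the listed relations; there is a surjective algebra map $A \twoheadrightarrow D(H)$ sending generators to generators. For the upper bound I would show that $A$ is spanned by the proposed PBW-type monomials $x_1^{m_1}x_{21}^{m_2}x_2^{n_1}g^{m_3}\gamma^{m_4}w_1^{m_5}w_{21}^{m_6}w_2^{n_2}$. Concretely: the relations \eqref{eq-jordan-plane-char2} and \eqref{rel-boson} let one bring any word in $x_1,x_2,g$ into the form $x_1^{m_1}x_{21}^{m_2}x_2^{n_1}g^{m_3}$ (this is the PBW basis of $H$ from Lemma \ref{lem-boso-restricted} together with the analysis of $\toba(V)$ recalled before it); symmetrically \eqref{eq-dual-jordan-plane-char2} and \eqref{gen-dual-boson} handle words in $w_1,w_2,\gamma$, giving $\gamma^{m_4}w_1^{m_5}w_{21}^{m_6}w_2^{n_2}$; \eqref{double:group} merges the two group-like parts; and the six cross relations plus \eqref{double:group} let one move every $w$ and every $\gamma$ to the right past every $x$ and every $g$, at the cost of lower-degree terms, so an induction on a suitable length/degree function terminates. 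Hence $\dim A \le 2^6\cdot 4^2 = 2^{10}$. For the lower bound, $\dim D(H) = (\dim H)^2 = (2^5)^2 = 2^{10}$. Since $A \twoheadrightarrow D(H)$ and $\dim A \le 2^{10} = \dim D(H)$, the map is an isomorphism, the monomials are linearly independent, and they form a PBW basis.

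The main obstacle is the upper-bound step: verifying that the spanning monomials really close up under the relations, i.e. that every reordering one is forced to perform produces only terms that are again (sums of) such monomials of no larger complexity, so that the straightening algorithm terminates. In particular one must check that commuting a $w_i$ or $\gamma$ leftward past $x_1, x_{21}, x_2, g$ never reintroduces an $x$ to the right of a $w$ in an uncontrollable way — here it is essential that the right-hand sides of the cross relations involve only $g$, $\gamma$, $x_i$ and scalars, never $w_i w_j$ products or $x_{21}$, so degrees in the $w$'s and $x$'s are non-increasing. Once this termination is in hand everything else is bookkeeping, and the dimension count closes the argument. (Alternatively, and perhaps more cleanly, one can invoke the general fact that for finite-dimensional $L$ the double $D(L)$ has the tensor-product PBW basis $\{(\text{basis of }L)\bowtie(\text{basis of }L^*)\}$, combined with the known PBW bases of $H$ and $H^*$; the content is then just identifying the six cross relations, which is the calculation in the second paragraph.)
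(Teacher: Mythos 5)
Your proposal is correct and is in substance the same argument the paper relies on: the paper's proof is a one-line citation of \cite[Proposition 2.4]{ap1}, and the strategy carried out there is exactly yours --- embed $H$ and $H^{*\,\mathrm{op}}$ as Hopf subalgebras, compute the six cross relations from the multiplication rule of the double, and close the presentation by comparing the span of the candidate PBW monomials with $\dim D(H)=(\dim H)^2=2^{10}$. The only caveat is that the ``routine'' cross-relation computation you defer is genuinely the delicate part (the terms $1+g$ and $g\gamma$ depend on the precise antipode/pairing conventions), but your identification of it as the crux, and the dimension-count closure, are exactly right.
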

\pf
This follows from \cite[Proposition 2.4]{ap1} and the previous results.
\epf

Here are some properties of $D(H)$. For more details see \cite[Definition 12.3.4]{Ra}.

\begin{prop}\label{ribbon}
The Hopf algebra
$D(H)$ is  pointed and ribbon.
\end{prop}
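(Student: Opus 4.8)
The goal is to establish two properties of $D(H)$: that it is pointed, and that it is ribbon. For \emph{pointedness}, the standard fact is that the Drinfeld double $D(L)$ of a finite-dimensional Hopf algebra $L$ is pointed whenever both $L$ and $L^{*}$ are pointed; equivalently, whenever $L$ is pointed and $G(L)$ acts on the relevant coradical filtration nicely. Here $H = \toba(\mathtt V)\#\ku\Gamma$ is pointed by Lemma \ref{lem-boso-restricted} (it is generated by the grouplike $g$ and skew-primitives), and $H^{*}\simeq \toba(V^{*})\#\ku^{\Gamma}$ is likewise pointed by Lemma \ref{lem:hopf-dual}, since $\ku^{\Gamma}\simeq\ku[\gamma]/(\gamma^2+\gamma)$ is pointed (indeed $\gamma$ is skew-primitive and the group of grouplikes is trivial) and the Nichols algebra part is generated in degree one. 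Then one invokes the general result that $D(L)$ is pointed if $L$ and $L^{*\,\mathrm{op}}$ are both pointed; concretely, the coradical of $D(H)$ is $\ku[G(H)\times G(H^{*})] = \ku\langle g,\gamma\rangle = D(\ku\Gamma)$, which is a group algebra, so $D(H)$ is pointed. Alternatively, one can read this off directly from the PBW-basis in Proposition \ref{prop:double}: filtering by the degree in $x_1,x_2,w_1,w_2$ realizes $D(H)$ as a filtered algebra whose associated graded is a bosonization of a Nichols-type algebra over the group $\langle g,\gamma\rangle$, whence the coradical is $\ku\langle g,\gamma\rangle$.

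\textbf{Ribbon structure.} For the ribbon property I would use the standard criterion (see e.g. \cite[Definition 12.3.4]{Ra} and the surrounding discussion, or Kauffman--Radford): a finite-dimensional Hopf algebra $L$ such that $D(L)$ is ribbon can be detected via the distinguished grouplikes. Precisely, $D(L)$ always carries a quasitriangular structure, hence is a braided (in fact a modular-type) category; $D(L)$ is ribbon if and only if there is a grouplike element $\ell\in D(L)$ with $\ell^2 = u\Ss(u)$ (the Drinfeld element squared) and implementing the square of the antipode via conjugation. Since $\mathfrak u(\mgo)$ and related restricted enveloping algebras are involutory, and $H$ is "almost" involutory, the natural candidate comes from the modular elements: writing $a\in G(H)$ for the distinguished grouplike of $H$ and $\alpha\in G(H^{*})$ for the modular element (the distinguished grouplike functional), the Kauffman--Radford theorem states that $D(L)$ is ribbon iff $G(L)$ contains an element $h$ with $h^2=a$ and $G(L^{*})$ contains $\beta$ with $\beta^2=\alpha$. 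Here $G(H)=\Gamma=\langle g\rangle$ has order $2$ and $G(H^{*})$ is trivial, so I must compute the distinguished grouplikes $a\in\Gamma$ and $\alpha\in\ku^{\Gamma}$. Because $\car\ku=2$ and $\Gamma$ has order $2$, the only grouplike in $\ku^{\Gamma}$ is the counit, so necessarily $\alpha=\varepsilon$, which is trivially a square ($\beta=\varepsilon$). For the other side I need to check that the distinguished grouplike $a$ of $H$ is a square in $\Gamma$; since $|\Gamma|=2$ this means I must show $a=1$, i.e. that $H$ is unimodular as an algebra. This last point is the crux.

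\textbf{The main obstacle: unimodularity of $H$.} The hard part is verifying that the integral of $H=\toba(\mathtt V)\#\ku\Gamma$ is a two-sided integral, equivalently that the distinguished grouplike $a$ equals $1$. I would compute a nonzero left integral $\Lambda\in H$ explicitly from the PBW-basis: $\Lambda = x_1 x_{21} x_2^{3}(1+g)$ (the product of top-degree PBW elements times the integral $1+g$ of $\ku\Gamma$), and then check that $h\Lambda = \varepsilon(h)\Lambda = \Lambda h$ for $h\in\{x_1,x_2,g\}$. The grouplike relations \eqref{rel-boson} and the Nichols relations \eqref{eq-jordan-plane-char2} are what one feeds in; since $g$ acts on the top of $\toba(\mathtt V)$ — spanned by $x_1 x_{21} x_2^{3}$ — by a unipotent character (all grouplike eigenvalues are $1$ in characteristic $2$ because $g^2=1$ forces the eigenvalue to satisfy $\lambda^2=1$, i.e. $\lambda=1$), the adjoint action of $g$ on $\Lambda$ is trivial, giving $a=1$. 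Thus $H$ is unimodular, and by the same reasoning $H^{*}$ is unimodular. The Kauffman--Radford criterion then yields the ribbon element: take $\ell = g\bowtie\varepsilon \in D(H)$, or more precisely the grouplike built from the square roots $g$ and $\varepsilon$ of the (trivial) distinguished grouplikes. Finally I would note, as a sanity check, that $D(H)$ being pointed with a finite grouplike group and admitting such an $\ell$ is consistent with $D(H)$ surjecting onto the involutory (hence spherical, hence ribbon) Hopf algebra $\mathfrak u(\mgo)$ via $\pi$ from Theorem \ref{mainthm:abelian-extension}. In summary: pointedness is immediate from the coradical being $\ku\langle g,\gamma\rangle$; the ribbon property reduces via Kauffman--Radford to unimodularity of $H$ and $H^{*}$, which I verify by an explicit integral computation exploiting that all grouplike characters are trivial in characteristic $2$.
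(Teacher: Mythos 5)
Your argument is correct in substance but follows a genuinely different route from the paper's. For pointedness the paper filters the generating subcoalgebra $C=\ku\{1,g,x_1,x_2,w_1,\gamma,w_2\}$ explicitly and invokes \cite[Proposition 4.1.2]{Ra} and \cite[Corollary 5.1.12]{Ra}, whereas you use the general bound $D(L)_0\subseteq \ku[G(L^{*})\times G(L)]$; both work, but note that $G(H^{*})$ is \emph{trivial} ($\gamma$ is primitive, and the only character of $C_2$ in characteristic $2$ is trivial), so the coradical is $\ku\{1,g\}$, not $\ku\langle g,\gamma\rangle=D(\ku\Gamma)$ --- the latter is not even cosemisimple, so it cannot be a coradical; this slip does not affect the conclusion. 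For the ribbon property the paper verifies properties of $D(H)$ directly: $D(H)$ is unimodular because every Drinfeld double is \cite[Theorem 13.2.1]{Ra}, it is quasitriangular, and $\Ss^2=\ad(g)$ with $g$ grouplike, whence \cite[Proposition 3]{kr} applies. You instead use the Kauffman--Radford square-root criterion on the distinguished grouplikes $a\in G(H)$, $\alpha\in G(H^{*})$, which buys a more structural reduction (to unimodularity of $H$ and $H^{*}$) at the cost of computing integrals of $H$ and $H^{*}$ rather than quoting a general fact about doubles. Two points to tighten: since $G(H^{*})=\{\varepsilon\}$, the condition $\alpha=\varepsilon$ (equivalently, $H$ unimodular) is automatic and your explicit integral $x_1x_{21}x_2^3(1+g)$ only re-proves that; the condition you actually need, $a=1$, is equivalent to unimodularity of $H^{*}$, not of $H$, so the computation that carries the weight is the symmetric one showing $w_1w_{21}w_2^3\,\delta_1$ is a two-sided integral of $H^{*}$ (it is, by the same degree and eigenvalue argument, since $\gamma$ commutes past the even-length word $w_1w_{21}w_2^3$). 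With those labels straightened out your proof is complete.
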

\pf
Consider the following subspaces of $D(H)$:
\begin{align*} & C_{\{0\}}= \ku\{1,g\}, & &   C_{\{1\}}=  C_{\{0\}} + \ku\{x_1,x_2,w_1,\gamma\}, & &  C_{\{2\}}= C_{\{1\}} + \{w_2\}.
\end{align*}
This is a coalgebra filtration of $C = C_{\{2\}}$, thus  the coradical $C_0$ of $C$ equals $C_{\{0\}}$
by \cite[Proposition 4.1.2]{Ra}.
Since $C$ generates $D(H)$ as algebra,   $D(H)_0\subseteq C_{\{0\}}$ by \cite[Corollary 5.1.12]{Ra}. 
Thus $D(H)$ is pointed.
Now $D(H)$ is unimodular by \cite[Theorem 13.2.1]{Ra}.
Clearly $D(H)$ is quasitriangular and  $\Ss^2(x)=gxg^{-1}$,  $x \in D(H)$. By \cite[Proposition 3]{kr}, $D(H)$ is ribbon.
\epf 

\subsection{\texorpdfstring{$D(H)$}{} as an abelian extension}\label{sub:abelian-ext-A} 
Recall that a pair $A\overset{\iota}\hookrightarrow C \overset{\pi}\twoheadrightarrow B$ 
of  Hopf algebra maps
is a {\it short exact sequence} if $\iota$ is injective, $\pi$ is surjective,  
$\ker \pi=C\iota(A)^{+}$,  and  $\iota(A)=C^{\rm co\,\pi}$ ( see e.g. \cite{ad}).
We also say that $C$ is an extension of $A$ by $B$. When $B$ is cocommutative and $A$ is commutative,
 the extension is called \emph{abelian}.

\begin{remark} \label{exactseq}
If $A \overset{\iota}\hookrightarrow C$ is injective and faithfully flat, $\iota(A)$ is stable by the left adjoint action of $C$,  $\pi$ is surjective and $\ker \pi=C\iota(A)^{+}$, then the sequence $A \overset{\iota}\hookrightarrow C \overset{\pi}\twoheadrightarrow B$ is exact; see \cite[Corollary 1.2.5, 1.2.14]{ad}, \cite{sch}.
\end{remark}

\begin{theorem}\label{teo:sequence}
The subalgebra $\nucleo$  of $D(H)$ generated by $x_1$, $x_{21}$, $w_1$, $w_{21}$ and $g$ 
is a normal local commutative Hopf subalgebra with defining relations 
\begin{align}\label{eq:sequence}
x_1^2=0,& &x_{21}^2=0,& & w_1^2=0,& &w_{21}^2=0,& & g^2=1.
\end{align}
Also, $\dim \nucleo = 2^5$. Let $\pi: D(H) \to \mathfrak{u}(\mgo)$ be the algebra map given by
\begin{align*}
\pi(x_1) =\pi(w_1) &= \pi(g - 1) =0,  &
\pi(x_2)&= a,& \pi(w_2)  &= b, & \pi(\gamma) &= c.
\end{align*}
Then $\pi$ is a map of Hopf algebras, the short sequence $\nucleo\overset{\iota}\hookrightarrow D(H)\overset{\pi}\twoheadrightarrow \mathfrak{u}(\mgo)$ is exact and $D(H)$ is an abelian extension. 
\end{theorem}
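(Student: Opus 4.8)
The plan is to establish the four assertions of the theorem in the following order: (i) $\nucleo$ is a commutative local subalgebra with the stated presentation and dimension; (ii) $\nucleo$ is a normal Hopf subalgebra; (iii) $\pi$ is a well-defined Hopf algebra map with $\ker\pi = D(H)\nucleo^{+}$; (iv) conclude exactness and the abelian property via Remark \ref{exactseq}.

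\textbf{Commutativity, locality and dimension of $\nucleo$.} First I would verify, using the relations in Proposition \ref{prop:double} and Lemma \ref{relbasic}-style computations, that the five generators $x_1, x_{21}, w_1, w_{21}, g$ pairwise commute in $D(H)$. The relations $x_1^2 = 0$, $w_1^2 = 0$, $g^2 = 1$ are immediate; for $x_{21}^2 = 0$ and $w_{21}^2 = 0$ one uses that $x_{21} = x_1x_2 + x_2x_1$ and the last relation in \eqref{eq-jordan-plane-char2} (and its dual). The commutation of $w_i$ with $x_j$ must be checked from the cross relations in Proposition \ref{prop:double}: e.g. $w_1 x_1 = x_1 w_1$ is given, while $x_1$ commuting with $w_{21}$ and $g$ needs the mixed relations. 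Once commutativity and the quadratic/group relations are in hand, $\nucleo$ is a quotient of $\ku[x_1,x_{21},w_1,w_{21}]/(x_1^2,x_{21}^2,w_1^2,w_{21}^2) \otimes \ku\Gamma$, hence $\dim\nucleo \le 2^5$; equality follows because these elements are part of the PBW-basis of $D(H)$ listed in Proposition \ref{prop:double}, so they are linearly independent and the monomials $x_1^{m_1}x_{21}^{m_2}g^{m_3}w_1^{m_5}w_{21}^{m_6}$ are too. Locality: the augmentation ideal $\nucleo^{+}$ is generated by nilpotents ($x_1,x_{21},w_1,w_{21}$) together with $g-1$, and $(g-1)^2 = g^2 - 2g + 1 = 0$ in characteristic $2$, so $\nucleo^{+}$ is nilpotent and $\nucleo$ is local.

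\textbf{$\nucleo$ is a normal Hopf subalgebra.} Using \eqref{for-copro-boson} and \eqref{for-copro-dual-boson}, the generators are skew-primitive: $\Delta(x_1) = x_1\ot 1 + g\ot x_1$, similarly for $w_1$; for $x_{21}$ and $w_{21}$ one computes $\Delta(x_{21}) = x_{21}\ot 1 + g^2\ot x_{21} + (\text{terms involving } x_1\ot x_1)$ — here the characteristic-$2$ identity and the $g$-twist make $x_{21}$ a $(g^2,1) = (1,1)$-primitive-like element up to the symmetric $x_1\ot x_1$ term, which is already in $\nucleo\ot\nucleo$. Hence $\Delta(\nucleo)\subseteq \nucleo\ot\nucleo$, and since $\Ss$ permutes skew-primitives, $\nucleo$ is a Hopf subalgebra. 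For normality one checks that the left adjoint action $\ad_c(n) = c_{(1)} n \Ss(c_{(2)})$ of each algebra generator $c\in\{x_1,x_2,g,w_1,w_2,\gamma\}$ of $D(H)$ preserves $\nucleo$; this reduces to the commutation relations in Proposition \ref{prop:double}, e.g. $\ad_{x_2}(x_1)$, $\ad_{w_2}(x_1)$, $\ad_{\gamma}(x_1)$ all land in $\nucleo$ by inspection (the "$1+g$" and "$g\gamma$" correction terms lie in $\nucleo$ or produce $x_1$-multiples). Since $D(H)$ is generated by these, $\nucleo$ is stable under the adjoint action.

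\textbf{The map $\pi$ and the kernel.} To see $\pi$ is a well-defined algebra map, I verify that $a = \pi(x_2)$, $b = \pi(w_2)$, $c = \pi(\gamma)$ satisfy in $\mathfrak u(\mgo)$ the images of all relations of $D(H)$ from Proposition \ref{prop:double}: the relations \eqref{eq-jordan-plane-char2} become $a^4 = 0$ (and $a^2 = 0$ is NOT imposed — wait, $x_1^2=0$ maps to $0=0$ since $\pi(x_1)=0$; the relation $x_2^4 = 0$ maps to $a^4 = 0$, which holds by \eqref{rel-u(L)}); \eqref{eq-dual-jordan-plane-char2} gives $b^4 = 0$; \eqref{rel-boson}, \eqref{gen-dual-boson}, \eqref{double:group} give $c^2 = c$ (from $\gamma^2 = \gamma$) and the group relations trivially; the cross relations in Proposition \ref{prop:double} give precisely $ba = ab + c$ (from $w_2 x_2 = x_2 w + g\gamma$, noting $\pi(w) = \pi(w_1 + w_2) = b$ and $\pi(g\gamma) = c$), $ca = ac + a$ (from $\gamma x_2 = x_2\gamma + x_2$), and $cb = bc + b$ — matching exactly the defining relations \eqref{rel-u(L)} of $\mathfrak u(\mgo)$ via \eqref{eq:u(m)-iso}. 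One also checks surjectivity (the images $a,b,c$ generate) and that $\pi$ respects $\Delta$ by comparing on generators using \eqref{for-copro-boson}, \eqref{for-copro-dual-boson} against the primitivity of $a,b,c$ in $\mathfrak u(\mgo)$ — here $\Delta(x_2) = x_2\ot 1 + g\ot x_2 \mapsto a\ot 1 + 1\ot a$ since $\pi(g) = 1$, good; and $\Delta(w_2) = w_2\ot 1 + 1\ot w_2 + \gamma\ot w_1 \mapsto b\ot 1 + 1\ot b$ since $\pi(w_1) = 0$, good. Since $\nucleo^{+} = \ker(\pi|_{\nucleo})$ (as $\pi$ kills $x_1, w_1, g-1$ and $\pi(x_{21}) = \pi(x_1 x_2 + x_2 x_1) = 0$, likewise $w_{21}$), we get $D(H)\nucleo^{+}\subseteq\ker\pi$. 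For the reverse inclusion, a dimension count suffices: $\dim\mathfrak u(\mgo) = 2^5$ (PBW: $a^i b^j c^k$, $0\le i,j\le 3$, $0\le k\le 1$) and $\dim D(H) = 2^{10} = \dim\nucleo\cdot\dim\mathfrak u(\mgo)$, so once $D(H)$ is faithfully flat (indeed free) over $\nucleo$ — which follows from the PBW-basis of Proposition \ref{prop:double}, the $\nucleo$-monomials being a free basis complemented by the $g^{m_3}\gamma^{m_4}w_2^{n_2}$... more precisely by the $x_2^{n_1}\gamma^{m_4}w_2^{n_2}$ monomials — one gets $\dim(D(H)/D(H)\nucleo^{+}) = 2^5 = \dim\mathfrak u(\mgo)$, forcing $\ker\pi = D(H)\nucleo^{+}$.

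\textbf{Conclusion.} Having shown $\iota$ injective and $D(H)$ faithfully flat over $\nucleo$ (free, by PBW), $\nucleo$ stable under the left adjoint action, $\pi$ surjective, and $\ker\pi = D(H)\nucleo^{+}$, Remark \ref{exactseq} yields that $\nucleo\overset{\iota}\hookrightarrow D(H)\overset{\pi}\twoheadrightarrow\mathfrak u(\mgo)$ is exact. Since $\mathfrak u(\mgo)$ is cocommutative (it is a restricted enveloping algebra) and $\nucleo$ is commutative, the extension is abelian. The step I expect to be the main obstacle is the careful verification that $\nucleo$ is closed under comultiplication and under the adjoint action of \emph{all} generators of $D(H)$ — the characteristic-$2$ bookkeeping for $x_{21}$, $w_{21}$ and the cross relations involving the correction terms $1+g$, $g\gamma$, $x_1 w_1$ is where sign-free but fiddly computation is unavoidable; everything else is either immediate from the PBW-basis or a dimension count.
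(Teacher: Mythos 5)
Your proposal is correct and follows essentially the same route as the paper's (very terse) proof: read off commutativity, locality and $\dim\nucleo=2^5$ from the PBW basis of Proposition \ref{prop:double}, check normality on generators, verify $\pi$ on the defining relations and coproducts, and identify $\ker\pi=D(H)\nucleo^{+}$ by a dimension/freeness count before invoking Remark \ref{exactseq}. The only slip is cosmetic: the extra term in $\Delta(x_{21})$ is $x_1g\otimes x_1$ rather than $x_1\otimes x_1$ (and $w_1\otimes w_1$ for $w_{21}$), which still lies in $\nucleo\otimes\nucleo$, so your conclusion stands.
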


\pf
By Proposition \ref{prop:double}, $\nucleo$ is a (local) commutative Hopf subalgebra of $D(H)$ and $\dim \nucleo = 2^5$.  Let $\widetilde{\nucleo}$ be the commutative Hopf algebra
generated by $x_1$, $x_{21}$, $w_1$, $w_{21}$ and $g$  with defining relations
\eqref{eq:sequence}; in particular, the relations $x_{1}x_{21} = x_{21}x_{1}$
and $w_{1}w_{21} = w_{21}w_{1}$ hold in $\widetilde{\nucleo}$ by the commutativity assumption. The coproduct of $\widetilde{\nucleo}$ is given by: $x_1\in \cP{g}{1}{\widetilde{\nucleo}}$, $w_1\in \mathcal{P}(\widetilde{\nucleo})$, $\Delta(x_{21})=x_{21}\otimes 1+1\otimes x_{21}+x_1g\otimes x_1$,  $\Delta(w_{21})=w_{21}\otimes 1+1\otimes w_{21}+w_1\otimes w_1$ and $g$ is a group-like element of $\widetilde{\nucleo}$. Clearly there is a Hopf algebra isomorphism from $\widetilde{\nucleo}$ to $\nucleo$. By direct calculations, we verify that $\nucleo$ is normal. For instance, we have $(\ad_r x_2)(g)=\Ss((x_2)_{(1)})g(x_2)_{(2)}=x_1\in T$ and $(\ad_r x_2)(x_1)=x_{21}g\in T$. Also, the following commutation relations are useful to prove that $T$ is normal: 
$w_2x_{21}=x_{21}w_2+x_1(1+g)$ and $x_2w_{21}=w_{21}x_2$. Hence $D(H)\nucleo^{+}$ is a Hopf ideal of $D(H)$. 
Clearly, $\pi$ is well-defined, preserves the comultiplication and induces an isomoprhism between the quotient $D(H)/D(H)\nucleo^{+}$ and $ \mathfrak{u}(\mgo)$.
\epf

\section{Properties of the algebra \texorpdfstring{$\mathfrak{u}(\mgo)$}{}}\label{sec:tame}
Let $\mgo$ be as in \S \ref{subsection:restricted}. 
Here we prove that the representation type of $\mathfrak{u}(\mgo)$ is tame and that the basic algebra associated to $\mathfrak{u}(\mgo)$ is special biserial. 

\subsection{The simple  \texorpdfstring{$\mathfrak{u}(\mgo)$}{}-modules}\label{sub:irred-rep-u(g)}
Let $V_{0}$, respectively $V_1$, denote the tri\-vi\-al one-dimensional  $\mathfrak{u}(\mgo)$-module,
respectively the three-dimensional $\mathfrak{u}(\mgo)$-module $\mfG$ with the adjoint representation $\ad$, 
cf. Remark \ref{rem:adjoint-representation}.  This is given in the basis $\{v_1, v_2, v_3\} \coloneqq  \{b,c,a\}$ of $\mfG$ by
$\ad a = \Att$, $\ad b = \Btt$, $\ad c = \Ctt$, where
\begin{align}
\label{mod-simple-3} \Att &=\left( \begin{matrix} 0 & 0 & 0 \\ 1 & 0 & 0 \\ 0 & 1 & 0 \end{matrix} \right),&  
\Btt &=\left( \begin{matrix} 0 & 1 & 0 \\ 0 & 0 & 1 \\ 0 & 0 & 0 \end{matrix} \right),&  
\Ctt &=\left( \begin{matrix} 1 & 0 & 0 \\ 0 & 0 & 0 \\ 0 & 0 & 1 \end{matrix} \right).&
\end{align}

The following result follows from \cite{do}; we include a proof for completeness.

\begin{prop}\label{prop:simple modules}
$\Irr \mathfrak{u}(\mgo) = \{V_0, V_1\}$.
\end{prop}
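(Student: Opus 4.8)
The plan is to classify the simple modules of $\mathfrak{u}(\mgo)$ directly from the presentation \eqref{rel-u(L)}, using the fact that $\mathfrak{u}(\mgo)$ is a finite-dimensional algebra over the algebraically closed field $\ku$. Since $\dim \mathfrak{u}(\mgo) = 2^5$ (from the PBW basis $\{a^{i}b^{j}c^{k}\}$ with $i,j \in \I_{0,3}$, $k \in \I_{0,1}$), the Jacobson radical is nilpotent and we only need to understand $\mathfrak{u}(\mgo)/\Jac\mathfrak{u}(\mgo)$, a product of matrix algebras. First I would observe that the two-sided ideal $N$ generated by $a$ and $b$ is nilpotent: indeed the relations $ab = ba + c$ together with $ca = ac + a$, $cb = bc + b$ from Lemma~\ref{relbasic} show that $c \in N$, and since $a^4 = b^4 = 0$ and all generators of $N$ eventually multiply out, a direct count on the PBW basis shows $N$ is a nilpotent ideal. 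Hence $\Jac\mathfrak{u}(\mgo) \supseteq N$, and $\mathfrak{u}(\mgo)/N \simeq \ku[c]/(c^2+c) \simeq \ku \times \ku$, which is semisimple; therefore $\Jac\mathfrak{u}(\mgo) = N$ and there are exactly two simple modules, both one-dimensional as modules over $\mathfrak{u}(\mgo)/N$, distinguished by the value of $c$ (either $0$ or $1$).

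Next I would identify these two modules concretely. The simple module with $c$ acting by $0$ and $a, b$ acting by $0$ is the trivial module $V_0$. For the one with $c$ acting by $1$: a one-dimensional module forces $a$ and $b$ to act as scalars $\alpha, \beta$ with $\alpha^2 = \beta^2 = 0$ (from $a^4 = b^4 = 0$, but actually one needs $ab = ba + c$ to give $0 = 1$, contradiction) — so in fact there is \emph{no} one-dimensional module with $c = 1$. This means the simple module on which $c$ acts nontrivially must have dimension $> 1$. The candidate is $V_1 = \mfG$ with the adjoint action described in \eqref{mod-simple-3}. I would verify that $V_1$ is simple by checking it has no proper nonzero submodule: a submodule is an $\ad$-invariant subspace of $\mfG$, and since $\mfG$ is a simple Lie algebra (stated after \eqref{lie-bracket}), it has no nonzero proper ideals, hence $V_1$ is a simple $\mathfrak{u}(\mgo)$-module. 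To see that $V_1$ exhausts the simples with $c \neq 0$, I would count dimensions: the block of $\mathfrak{u}(\mgo)/\Jac$ corresponding to $c = 1$ must be a matrix algebra $\mathrm{Mat}_d(\ku)$ with $d \geq 2$ (no one-dimensional module), and a short computation with the trace of the regular representation, or with the known structure from \cite{do}, pins down $d = 3$, so $V_1$ is the unique simple in that block.

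The main obstacle I anticipate is the bookkeeping needed to confirm $\Jac\mathfrak{u}(\mgo) = N$ rigorously — that is, verifying $N$ is genuinely nilpotent rather than merely "generated by nilpotents". This requires either a careful filtration argument (filter by powers of the augmentation-type ideal and use that the associated graded of $U(\mfG)$ modulo the restriction relations is the commutative algebra $\ku[a,b,c]/(a^4,b^4,c^2)$, which has a one-dimensional socle so the ideal $(a,b,c)$ is nilpotent), or an explicit computation that $(a,b)^{k} = 0$ for $k$ large using Lemma~\ref{relbasic} to normal-order products. An alternative, cleaner route is to invoke the general fact that for a finite-dimensional restricted Lie algebra $\mfL$, the simple $\mathfrak{u}(\mfL)$-modules coincide with the simple restricted $\mfL$-modules, combined with the classification of simple restricted modules over $\mfG$ (equivalently $\mgo$) from \cite{do} — but since the paper wants a self-contained proof, I would favor the direct Jacobson-radical argument. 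Finally I would remark that the functor $\pi^*$ from Theorem~\ref{teo:sequence} together with the exactness of the sequence confirms that these are also all the simple $D(H)$-modules.
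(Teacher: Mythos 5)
There is a fatal gap at the very first step. The two-sided ideal $N=\langle a,b\rangle$ is \emph{not} nilpotent: as you yourself note, $c=ab+ba\in N$, and the relation $c^2+c=0$ says that $c$ is a nonzero idempotent of $\mathfrak{u}(\mgo)$ (it is part of the PBW basis), whereas a nilpotent ideal contains no nonzero idempotent. For the same reason your computation of the quotient is inconsistent: since $c\in N$, one has $\mathfrak{u}(\mgo)/N\simeq\ku$, not $\ku[c]/(c^2+c)\simeq\ku\times\ku$. The conclusion you draw --- exactly two simple modules, both one-dimensional --- is false, and your own second paragraph contradicts it by (correctly) showing that no one-dimensional module has $c$ acting nontrivially and by exhibiting the three-dimensional simple module $V_1$. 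The attempted repair in the second paragraph does not close the gap either: $c$ is not central ($ca=ac+a$), so simple modules are not ``distinguished by the value of $c$'' and there is no well-defined ``block corresponding to $c=1$''; indeed $c$ acts on $V_1$ by $\Ctt=\operatorname{diag}(1,0,1)$, which is not a scalar. The one genuinely correct ingredient is the simplicity of $V_1$ (submodules of the adjoint module are ideals of the simple Lie algebra $\mfG$), but the completeness of the list $\{V_0,V_1\}$ --- the actual content of the proposition --- is never established: the trace/dimension count that is supposed to ``pin down $d=3$'' is not carried out, and the appeal to \cite{do} is exactly what a self-contained proof must avoid.

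For comparison, the paper proves completeness by a direct module-theoretic argument: since $b^2$ and $c$ commute, a simple module $V$ contains a common eigenvector $v$; the relation $b^4=0$ forces the $b^2$-eigenvalue to vanish, and one may normalize so that $bv=0$; the commutation formulas of Lemma~\ref{relbasic} then show that $\ku\{a^iv: i\in\I_{0,3}\}$ is a nonzero submodule, hence equals $V$, and a case analysis on the first linear dependence among the $a^iv$ yields $V\simeq V_0$ or $V\simeq V_1$ (the case where all four vectors are independent being excluded because it produces a proper submodule). If you want to salvage a radical-based argument, you would have to identify $\Jac\mathfrak{u}(\mgo)$ correctly --- it has dimension $32-1-9=22$ and is \emph{not} generated by $a$ and $b$ --- which is essentially as much work as the direct approach.
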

\begin{proof}
Let $V \in \Irr \mathfrak{u}(\mgo) $. Since $b^2c=cb^2$, there exist $0\neq v\in V$ and $\beta,\xi\in \ku$ such that $b^2v=\beta v$
and $cv=\xi v$. From $b^4=0$ follows that $\beta=0$. We can assume that $bv=0$. In fact, if $bv\neq 0$ we replace $v$ by $bv$ and $\xi$ by $\xi+1$.  
Let $W = \ku \{a^iv: i \in \I_{0,3}\}$, a linear subspace of $V$. Since 
\begin{align*}
ca^j&=a^jc+ja^j, & j &\in \N_0,& ba^2 &= a^2b+a & \text{and } ba^3 &= a^3b+a^2(c+1), 
\end{align*}
we have that
\begin{align*}
&c(av)=(\xi+1)av,& &c(a^2v)=\xi a^2v,& &c(a^3v)=(\xi+1)a^3v,\\
&b(av)=\xi v,& &b(a^2v)= av,& &b(a^3v)=(\xi+1)a^2v.
\end{align*}
Hence $W \neq 0$ is a $\mathfrak{u}(\mgo)$-submodule of $V$ and so $W=V$.
We consider various cases.
First, if $av\in \ku\{v\}$, then $V=\ku\{v\}$ and  $V \simeq V_0$. 
\medbreak
If $\{a^iv: i \in \I_{0,1}\}$ is linearly independent and $a^2v\in \ku\{a^iv: i \in \I_{0,1}\}$, then there exist $\lambda_i \in \ku$, $i \in \I_2$, such that $a^2v=\lambda_1v+\lambda_2av$. Applying $b$ to this equality we get $ba^2v=\lambda_2bav$, i.e. $av=\lambda_2\xi v$,
a contradiction. 
\medbreak
Now suppose that $\{a^iv: i \in \I_{0,2}\}$ is linearly independent and $a^3v\in \ku\{a^iv: i \in \I_{0,2}\}$. Thus there exist $\lambda_i\in \ku$, $i \in \I_3$, such that $a^3v  = \lambda_1v+\lambda_2av+\lambda_3a^{2}v$. 
Acting by $b$, we get that  $(\xi+1)a^2v+\lambda_3av+\lambda_2\xi v=0$. 
So $\xi=1$ and $\lambda_2=\lambda_3=0$ and $a^3v  = \lambda_1v$.
Applying $a$, we have $0 = a^4v = \lambda_1av$, thus $\lambda_1 = 0$. Therefore 
$V\simeq V_1$.

\medbreak
Assume finally that $\{a^iv: i \in \I_{0,3}\}$ is a basis of $V$.
Since $c^2+c=0$, we have $\xi=0$ or $1$. 
If $\xi=1$, respectively $\xi =0$, then  $U=\ku\{a^3v\}$, respectively  $U = \ku \{a^iv: i \in \I_{3}\}$, 
is a submodule of $V$, hence $V= U$, a contradiction.
\end{proof}

\begin{remark}From Proposition \ref{prop:simple modules}, or  directly,  we deduce that  $V_1^{\ast} \simeq V_1$.
\end{remark}

By Theorem \ref{teo:sequence}, $\mathfrak{u}(\mgo)\simeq D(H)/I$ where $I$ is a nilpotent ideal. Thus
$\Irr D(H) \simeq \Irr\mathfrak{u}(\mgo)$. We conclude from Proposition \ref{prop:simple modules}:

\begin{coro}\label{cor:simples-d(h)} $\Irr D(H) = \{V_0, V_1\}$ where $V_0$ is the trivial $D(H)$-module 
and $V_1$  is the three-dimensional module where $x_1$, $x_{21}$, $w_1$ and $w_{21}$ act by $0$;
$g$ acts by $\id$; and 
$x_2$, $w_2$,  $\gamma$ act by the matrices $\Att, \Btt, \Ctt$ in \eqref{mod-simple-3}.\qed
\end{coro}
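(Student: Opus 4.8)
The plan is to deduce the statement from Theorem~\ref{teo:sequence} together with Proposition~\ref{prop:simple modules}. First I would check that the ideal $I \coloneqq D(H)\nucleo^{+}$ is nilpotent. Since $\nucleo$ is local and finite-dimensional, $\nucleo^{+} = \Jac \nucleo$ is nilpotent, say $(\nucleo^{+})^{N} = 0$. Normality of $\nucleo$ in $D(H)$ gives $\nucleo^{+}D(H) = D(H)\nucleo^{+}$, so an easy induction yields $I^{n} = D(H)(\nucleo^{+})^{n}$ for all $n$, whence $I^{N} = 0$. Therefore $I \subseteq \Jac D(H)$, and the simple $D(H)$-modules are precisely the simple modules of $D(H)/I$. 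By the last assertion of Theorem~\ref{teo:sequence}, $\pi$ induces a Hopf algebra isomorphism $D(H)/I \simeq \mathfrak{u}(\mgo)$, so $\Irr D(H) \simeq \Irr \mathfrak{u}(\mgo)$, and the latter set is $\{V_{0}, V_{1}\}$ by Proposition~\ref{prop:simple modules}.

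It then remains to read off the $D(H)$-action on each of the two modules through the projection $\pi$ of Theorem~\ref{teo:sequence}. For $V_{0}$ the action is the counit, giving the trivial module. For $V_{1} = \mfG$ equipped with the adjoint $\mathfrak{u}(\mgo)$-action of Remark~\ref{rem:adjoint-representation}, an element $h \in D(H)$ acts as $\ad \pi(h)$. From $\pi(x_{1}) = \pi(w_{1}) = 0$ we get that $x_{1}$ and $w_{1}$ act by $0$; since the generators $x_{21}$ and $w_{21}$ of $\nucleo$ have vanishing counit, they lie in $\nucleo^{+} \subseteq I$ and hence also act by $0$; from $\pi(g) = 1$ we get that $g$ acts by $\id$; and from $\pi(x_{2}) = a$, $\pi(w_{2}) = b$, $\pi(\gamma) = c$ together with $\ad a = \Att$, $\ad b = \Btt$, $\ad c = \Ctt$ of \eqref{mod-simple-3} we obtain the stated matrices, so this module is indeed the $V_{1}$ described in Proposition~\ref{prop:simple modules} viewed over $D(H)$.

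I do not expect a genuine obstacle here. The only point requiring a (short) argument beyond bookkeeping is the nilpotency of $I$, which is immediate once normality of $\nucleo$ is used to commute the augmentation ideal $\nucleo^{+}$ past $D(H)$; everything else is a straightforward translation of Proposition~\ref{prop:simple modules} along the isomorphism $D(H)/I \simeq \mathfrak{u}(\mgo)$ and the explicit formulas for $\pi$.
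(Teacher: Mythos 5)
Your proposal is correct and follows essentially the same route as the paper: the paper also deduces the corollary from Theorem \ref{teo:sequence} by noting that $\mathfrak{u}(\mgo)\simeq D(H)/I$ with $I=D(H)\nucleo^{+}$ nilpotent, hence $\Irr D(H)\simeq\Irr\mathfrak{u}(\mgo)=\{V_0,V_1\}$ by Proposition \ref{prop:simple modules}, and then reads off the action through $\pi$. You merely spell out the nilpotency of $I$ (via locality and normality of $\nucleo$), which the paper asserts without detail.
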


\subsection{Extensions of the simple \texorpdfstring{$\mathfrak{u}(\mgo)$}{}-modules}
Our next goal  is to compute the groups $\ext^1_{\mathfrak{u}(\mgo)}(V_i,V_j)$, $i,j \in \I_{0,1}$.
Recall from Lemma \ref{relbasic} that
\begin{align}\label{eq:a2-b2}
a &= a^2b+ba^2,& b &= b^2a + ab^2.
\end{align}
Hence if $a^2=0$ or $b^2=0$ acting on an $\mathfrak{u}(\mgo)$-module $V$, then  $a=b=c=0$ in $\End V$.

\begin{lemma}\label{lema:ext-triv=triv}
Any  two-dimensional  $\mathfrak{u}(\mathfrak{\mgo})$-module is trivial.
Consequently,  \[\dim \ext^1_{\mathfrak{u}(\mgo)}(V_0,V_0)=0.\]
\end{lemma}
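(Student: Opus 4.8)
The plan is to prove the structural statement first and extract the $\ext^1$ computation as an immediate consequence. Let $V$ be a two-dimensional $\mathfrak{u}(\mgo)$-module, i.e. a representation $\rho\colon \mathfrak{u}(\mgo)\to \End V\cong M_2(\ku)$. I want to show $\rho(a)=\rho(b)=\rho(c)=0$. The key leverage is relation \eqref{eq:a2-b2}, namely $a=a^2b+ba^2$ and $b=b^2a+ab^2$ in $\mathfrak{u}(\mgo)$, together with the observation recorded right before the lemma: if $\rho(a)^2=0$ or $\rho(b)^2=0$ then $\rho(a)=\rho(b)=\rho(c)=0$. So it suffices to force $\rho(a)^2=0$ (say).

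First I would note that $\rho(a)$ satisfies $\rho(a)^4=0$ (from $a^4=0$ in \eqref{rel-u(L)}), so $\rho(a)$ is a nilpotent $2\times 2$ matrix, hence $\rho(a)^2=0$ automatically. (A nilpotent endomorphism of a $2$-dimensional space squares to zero by Cayley–Hamilton, or simply because its only possible Jordan type is $\jordan_2(0)$ or $\jordan_1(0)\oplus\jordan_1(0)$.) That already gives $\rho(a)^2=0$, and then $\rho(a)=\rho(a)^2\rho(b)+\rho(b)\rho(a)^2=0$, whence $\rho(b)$ also satisfies its own analogue or one directly uses $b=b^2a+ab^2$ with $\rho(a)=0$ to get $\rho(b)=0$, and finally $\rho(c)=\rho(ab+ba)=0$ from the first relation in \eqref{rel-u(L)} (or Lemma \ref{relbasic}). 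Thus $\rho$ factors through $\mathfrak{u}(\mgo)/\langle a,b,c\rangle \cong \ku$, so $V\cong V_0\oplus V_0$ as a module, i.e. $V$ is trivial.

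For the cohomological consequence: a nonsplit extension $0\to V_0\to E\to V_0\to 0$ would give a two-dimensional module $E$ with a one-dimensional submodule and quotient both isomorphic to $V_0$; in particular $E$ would be nontrivial (the action of $\mathfrak{m}$ on $E$ would be nonzero, since a split extension is the only trivial two-dimensional module). But we just showed every two-dimensional module is trivial, so every such extension splits, giving $\dim\ext^1_{\mathfrak{u}(\mgo)}(V_0,V_0)=0$. Alternatively, since $\ext^1(V_0,V_0)$ is exactly the set of isomorphism classes of such extensions modulo split ones and classifies two-dimensional modules with the stated socle/top, its vanishing is immediate from the triviality statement.

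I do not anticipate a genuine obstacle here; the only thing to be careful about is the very first reduction — that a nilpotent $2\times2$ matrix squares to zero — which is elementary, and checking that the chain $\rho(a)^2=0\Rightarrow \rho(a)=0\Rightarrow \rho(b)=0\Rightarrow \rho(c)=0$ uses only the relations already displayed in \eqref{rel-u(L)}, Lemma \ref{relbasic}, and \eqref{eq:a2-b2}. If one prefers to avoid even invoking the "$\rho(a)^2=0$ implies everything vanishes" remark, one can argue directly: pick a basis in which $\rho(b)$ is in Jordan form (strictly upper triangular), and use $b=b^2a+ab^2$ with $\rho(b)^2=0$ to get $\rho(b)=0$, then symmetrically $\rho(a)=0$, then $\rho(c)=0$; but using the remark is cleaner.
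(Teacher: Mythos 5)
Your proof is correct and follows essentially the same route as the paper: the paper's one-line argument is exactly that $a^4=0$ forces $\rho(a)^2=0$ on a two-dimensional module, and then the remark preceding the lemma (based on \eqref{eq:a2-b2}) gives $\rho(a)=\rho(b)=\rho(c)=0$. Your write-up just makes the nilpotency reduction and the chain of implications explicit.
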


\pf The previous remark applies since $a^2$ acts by $0$ on such a module.
\epf

We shall need the following more general result.

\begin{lemma}\label{block} Let $V \in \lmod{\mathfrak{u}(\mathfrak{\mgo})}$.
Then, $\rho_V(a)$ has no Jordan block of size $2$. 
\end{lemma}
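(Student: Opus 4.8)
The plan is to show that $\rho_V(a)$ is semisimple, which forces all its Jordan blocks to have size $1$, a stronger statement than the claim. The key identity is $a^2 = a^4 \cdot (\text{nothing})$; more precisely, since $a^4 = 0$ in $\mathfrak{u}(\mgo)$, the operator $\rho_V(a)$ is nilpotent of index at most $4$, so its Jordan blocks have sizes in $\{1,2,3,4\}$. I would first reduce to understanding how the relations $ba^2 = a^2b + a$, $ba^3 = a^3b + a^2\overline{c}$ (from Lemma \ref{relbasic}) and $ca = ac + a$ constrain the block structure. The cleanest route: suppose for contradiction that $\rho_V(a)$ has a Jordan block of size exactly $2$ as one of its blocks, i.e. there is a vector $v$ with $a^2 v = 0$ but $a v \neq 0$, and $v$ generates a $2$-dimensional $\rho_V(a)$-indecomposable summand.

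The main step is to exploit $a = a^2 b + b a^2$ (equation \eqref{eq:a2-b2}) together with the decomposition $V = \bigoplus_r (\text{sum of Jordan blocks of size } r)$ for $\rho_V(a)$. On a Jordan block of size $2$ spanned by $\{v, av\}$ with $a^2v = 0$, we have $a^2$ acting as $0$ on this block; but $a^2$ need not preserve the block. I would instead argue on $\ker \rho_V(a)^2 / \ker\rho_V(a)$, which as a vector space records exactly the blocks of size $\geq 2$, and on $\ker\rho_V(a)^2$ I would use $b$ to produce a contradiction with the block of size $2$: from $ba^2 = a^2 b + a$, if $a^2$ kills a vector $v$ with $av \neq 0$, applying this relation to a suitable preimage should pull $av$ back into $\ker\rho_V(a)^2$ in a way incompatible with the grading by block size. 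Concretely, I expect to show that $b$ maps the "size exactly $2$" part into the "size $\geq 3$" part and vice versa in a way that, combined with $a^4 = 0$ and $c^2 = c$, is impossible unless the size-$2$ part vanishes. An alternative and perhaps slicker approach: note that $\mathfrak{u}(\mgo)$ contains the subalgebra generated by $a, a', c$ with $a^2 = a'$ (the $2$-operation), so $\rho_V(a)^2 = \rho_V(a')$, and then use that $a'$ is itself subject to the restricted structure $(a')^{[2]} = 0$, i.e. $\rho_V(a)^4 = 0$, together with $[a', c] = 0$ and $[a', b] = a$ to diagonalize.

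The hard part will be organizing the case analysis cleanly so as to rule out a size-$2$ block without accidentally also ruling out size-$3$ or size-$4$ blocks (which do occur — the projective covers have $\rho(a)$ with larger blocks, as the later classification shows). I anticipate the argument must genuinely use that the block has size \emph{exactly} $2$: the obstruction is that $ba^2 = a^2b + a$ and $b a^3 = a^3 b + a^2\overline{c}$ behave differently, and it is precisely the "$+a$" lower-order term with no compensating $a^2$ above it that creates the contradiction in the size-$2$ case. So the plan is: (i) set $W_r = \ker\rho_V(a)^r$; (ii) observe $\rho_V(b)$ and $\rho_V(c)$ do not preserve the $W_r$ but shift them in a controlled way dictated by Lemma \ref{relbasic}; (iii) localize at a hypothetical size-$2$ block, apply the commutation relations to get an element of $W_1$ that must also be nonzero in $W_2/W_1$, contradiction; (iv) conclude no size-$2$ block exists. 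Steps (i)--(ii) are routine bookkeeping; step (iii) is where the real content sits.
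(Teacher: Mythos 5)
Your plan rests on the right identity, namely \eqref{eq:a2-b2} in the form $a = a^2b + ba^2$, and this is exactly what the paper's proof uses: it fixes a Jordan basis $\{v_i\}$ for $\rho_V(a)$ (so $av_i = \lambda_i v_{i+1}$ with $\lambda_i \in \{0,1\}$), takes $v_i$ at the top of a hypothetical chain of length $2$ (so $av_i = v_{i+1}$ and $av_{i+1}=0$, hence $a^2v_i=0$), and computes $v_{i+1} = a^2(bv_i) = \sum_k \beta_{ik}\lambda_k\lambda_{k+1}v_{k+2}$; comparing the coefficient of $v_{i+1}$ on both sides forces $\lambda_{i-1}=1$, i.e.\ the chain extends backwards and the block has size at least $3$. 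So your steps (i)--(ii) and your identification of the ``$+a$'' lower-order term as the obstruction are on target, and the overall route is the same as the paper's.

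The one place your sketch would fail if executed literally is step (iii): you propose to ``get an element of $W_1$ that must also be nonzero in $W_2/W_1$,'' but any element of $W_1 = \ker\rho_V(a)$ is zero in $W_2/W_1$ by definition, so the kernel filtration alone cannot carry the contradiction. The contradiction lives in the \emph{image} filtration: if $v$ is the top of a size-$2$ block, then $av = a^2(bv) + b(a^2v) = a^2(bv) \in \operatorname{im}\rho_V(a)^2$; on the other hand $\ker\rho_V(a) \cap \operatorname{im}\rho_V(a)^2$ is spanned by the bottom vectors of the Jordan blocks of size at least $3$, whereas $av$ is the bottom vector of a block of size $2$ and the bottom vectors of all blocks are linearly independent --- contradiction. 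Two further remarks: your opening suggestion that $\rho_V(a)$ might be semisimple should be discarded ($\rho_V(a)$ is nilpotent and blocks of sizes $3$ and $4$ genuinely occur, e.g.\ on $V_1$ and on the projective covers), and the speculation that ``$b$ maps the size-$2$ part into the size-$\geq 3$ part and vice versa'' is not needed: a single application of $a=a^2b+ba^2$ to the top of the hypothetical block suffices.
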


\pf  Let  $\{v_i: i \in \I_m\}$ be a basis of $V$ such that $av_i = \lambda_i v_{i+1}$, where $\lambda_i \in \I_{0,1}$ for $i<m$ and $av_m = 0$.
Then $bv_j=\sum\limits_{k\in \I_m} \beta_{jk}v_k$, where $\beta_{jk}\in \ku$.
Pick $i \in \I_m$ such that $av_i=v_{i+1}$ and $av_{i+1}=0$.
By \eqref{eq:a2-b2} we have
\[ v_{i+1} = av_{i} = a^2bv_{i} = a^2 \sum\limits_{k\in \I_m} \beta_{ik}v_k
=  \sum\limits_{k\in \I_m} \beta_{ik} \lambda_k \lambda_{k+1}v_{k+2}.
\]
If $i=1$, then  $v_2 \in \ku \{v_i: i \in \I_{3,m}\}$, a contradiction.
If $i >1$, then
\[ 1 =  \beta_{i i-1} \lambda_{i-1} \lambda_{i}.
\]
Thus $\lambda_{i-1}  \neq 0$, i.~e.,  $av_{i-1} = v_{i}$ and the claim follows.
\epf 

For any $\vartheta,\lambda, \mu \in \ku$, $V_{\vartheta, \lambda,\mu}\in \lmod{\mathfrak{u}(\mgo)}$ denote $ \ku^4$   
with action given by 
\begin{align*}
a\mapsto\left( \begin{matrix} 0 & 0 & 0 & 0 \\ \vartheta & 0 & 0 & 0\\ 0 & 1 & 0 & 0\\ 0 & 0 & 1 & 0 \end{matrix} \right),
& & b \mapsto \left( \begin{matrix} 0 & 0 & 0 & 0\\ 0 & 0 & 1 &0\\ \lambda & 0 & 0& 1\\ \mu & 0 & 0 & 0 \end{matrix} \right),
&   &c \mapsto \left( \begin{matrix} 0 & 0 & 0 & 0\\ 0 & 1 & 0& 0 \\ 0 & 0 & 0 & 0\\  \lambda & 0 & 0 & 1 \end{matrix} \right).&
\end{align*}

\begin{lemma}\label{lema:ext-v0-v1}
If $U$ is an extension of $V_0$ by $V_1$, then $U$ is isomorphic to $V_{\vartheta,\lambda, \mu}$, for some
$\vartheta,\lambda, \mu \in \ku$. Also, 
\begin{align} \label{eq:ext-v0-v1}
\dim \ext^1_{\mathfrak{u}(\mgo)}(V_0,V_1)=2.
\end{align}
\end{lemma}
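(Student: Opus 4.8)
The plan is to construct an explicit extension from the module structure and then read off the $\ext^1$ computation. Let $U$ be an extension of $V_0$ by $V_1$, so we have a short exact sequence $0 \to V_1 \to U \to V_0 \to 0$ with $\dim U = 4$. Fix the basis $\{v_1,v_2,v_3\} = \{b,c,a\}$ of $V_1 \subseteq U$ on which $a,b,c$ act by $\Att,\Btt,\Ctt$ from \eqref{mod-simple-3}, and pick a lift $v_0 \in U$ of the generator of $V_0$. Since $V_1$ is a submodule, $a v_0$, $b v_0$, $c v_0$ all lie in $V_1$, so the extension is entirely encoded by the three vectors $a v_0, b v_0, c v_0 \in V_1$, subject to the defining relations \eqref{rel-u(L)} of $\mathfrak{u}(\mgo)$.

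First I would impose the relations one at a time to cut down the $3 \times 3 = 9$ parameters. The relation $c^2 + c = 0$ forces $c v_0$ to lie in the image of $\Ctt^2 - \Ctt$ shifted appropriately — more precisely, writing $c v_0 = \sum \xi_i v_i$, applying $c$ again and using that $c$ acts on $V_0$ by $0$ gives $\Ctt(\sum \xi_i v_i) + \sum \xi_i v_i = 0$, i.e. $\sum \xi_i v_i \in \ker(\Ctt + \id)$. Then Lemma \ref{block} is the crucial constraint: $\rho_U(a)$ has no Jordan block of size $2$, and since $\rho_{V_1}(a) = \Att$ already has a Jordan block of size $3$ (on $v_1 \mapsto v_2 \mapsto v_3 \mapsto 0$), the only way to avoid a size-$2$ block is to have $a v_0$ "extend" the existing size-$3$ chain to size $4$ or be absorbed into it; a short case analysis on the Jordan form of $\begin{pmatrix} \Att & * \\ 0 & 0\end{pmatrix}$ shows $a v_0 = \vartheta v_1 + (\text{something killed by an adapted basis change})$, and by changing the lift $v_0$ and rescaling I can normalize to $a v_0 = \vartheta v_1$ with $\vartheta \in \{0,1\}$ — or, keeping $\vartheta$ as a free scalar for the family description, $a v_0 = \vartheta v_1$. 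The remaining relations $ab+ba=c$, $ac+ca=a$, $bc+cb=b$, $a^4 = b^4 = 0$ then pin down $b v_0$ and $c v_0$ in terms of $\vartheta$ and two more free parameters $\lambda, \mu$; matching against the displayed matrices for $V_{\vartheta,\lambda,\mu}$ (where the fourth basis vector is $v_0$ and the action on $v_0$ is given by the last columns) verifies $U \simeq V_{\vartheta,\lambda,\mu}$.

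Next I would compute $\dim \ext^1$. One clean route: $\ext^1_{\mathfrak{u}(\mgo)}(V_0, V_1)$ classifies extensions up to the equivalence $U \sim U'$ that fixes the sub and quotient. Under this finer equivalence, the only freedom in choosing the lift $v_0$ is adding an element of $V_1$, i.e. $v_0 \mapsto v_0 + w$ with $w \in V_1$, which changes the cocycle $(a v_0, b v_0, c v_0)$ by the coboundary $(aw, bw, cw)$. So $\ext^1 = Z/B$ where $Z$ is the space of triples $(\alpha,\beta,\gamma) \in V_1^{\oplus 3}$ satisfying the linearized relations and $B = \{(aw,bw,cw) : w \in V_1\}$. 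I would compute $\dim Z$ from the linear equations above (expecting $\dim Z = 5$: the constraints from Lemma \ref{block}/the $a$-relation cut $\alpha$ down to a line, and the $c^2+c$, $ab+ba=c$, $ac+ca=a$, $bc+cb=b$ relations determine the rest up to a $2$-dimensional residual plus the $\vartheta$-line, but the bookkeeping must be done carefully) and $\dim B = \dim V_1 - \dim V_1^{\mathfrak{u}(\mgo)} = 3 - 1 = 2$ since $V_1$ has trivial invariants (visible from $\Att,\Btt,\Ctt$ having no common fixed vector, or since $V_1 \not\simeq V_0$). This yields $\dim \ext^1 = 5 - 2 = \dim\{(\vartheta,\lambda,\mu)\} = 3$ — so I should recheck: the correct count must give $2$, which means one of the three parameters $\vartheta,\lambda,\mu$ is redundant as an extension class (equivalently, a coboundary shifts it), and the honest computation of $Z$ and $B$ will show $\dim Z - \dim B = 2$, consistent with \eqref{eq:ext-v0-v1}.

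\textbf{The main obstacle} will be the careful normal-form analysis in the first part: handling the action of $v_0$ under the relations while simultaneously tracking which parameters are genuine moduli versus artifacts of the basis choice, and in particular deploying Lemma \ref{block} correctly to eliminate the spurious Jordan-block-of-size-$2$ configurations for $\rho_U(a)$. The $\ext^1$ dimension count is then a finite-dimensional linear algebra computation whose only subtlety is getting $\dim Z$ exactly right; I expect the cleanest presentation is to exhibit two explicit linearly independent non-split extensions (e.g. the classes of $V_{1,0,0}$ and $V_{0,1,0}$, or whichever two of the three normalized families are inequivalent and jointly generate) and separately argue that every extension is equivalent to a $\ku$-linear combination of these, giving both the upper and lower bound on $\dim \ext^1_{\mathfrak{u}(\mgo)}(V_0,V_1) = 2$.
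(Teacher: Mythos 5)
Your first half follows essentially the same route as the paper: lift a generator $v_0$ of $V_0$, observe that the extension is encoded by the triple $av_0,bv_0,cv_0\in V_1$, normalize $av_0=\vartheta v_1$ by adjusting the lift, and then use $c=ab+ba$ and $a=ac+ca$ to force $bv_0=\lambda v_2+\mu v_3$ and to pin down $cv_0$. Two small remarks: the paper achieves the normalization by a case split on whether the minimal polynomial of $a|_U$ is $X^3$ or $X^4$, not via Lemma \ref{block} (which is in fact not needed here: since $\Att$ maps $V_1$ onto $\ku\{v_2,v_3\}$, the $v_2$- and $v_3$-components of $av_0$ can always be absorbed into the choice of lift, regardless of Jordan type); and your description of this step is only a sketch, but the strategy is sound and matches the paper's.

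The genuine gap is in the $\ext^1$ count. You set $B=\{(aw,bw,cw):w\in V_1\}$ and compute $\dim B=\dim V_1-\dim V_1^{\mathfrak u(\mgo)}=3-1=2$, while in the same breath asserting that $\Att,\Btt,\Ctt$ have no common fixed vector. Since $V_1$ is a nontrivial simple module one has $V_1^{\mathfrak u(\mgo)}=0$ (indeed $\ker\Att\cap\ker\Btt=\ku v_3\cap\ku v_1=0$), so the coboundary map $w\mapsto(aw,bw,cw)$ is injective and $\dim B=3$, not $2$. With $\dim Z=5$ (which you guess but do not verify) this gives $5-3=2$; your arithmetic instead lands on $3$, and you explicitly defer the correction to an unperformed ``honest computation''. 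The missing step is to identify which of $\vartheta,\lambda,\mu$ is a coboundary artifact: after normalizing $av_0=\vartheta v_1$, the residual freedom in the lift is $v_0\mapsto v_0+\zeta v_3$ (the kernel of $\Att$), which sends $\lambda\mapsto\lambda+\zeta$ while fixing $\vartheta$ and $\mu$. This is exactly the commutative-diagram computation the paper carries out, and it is what yields $\dim\ext^1_{\mathfrak u(\mgo)}(V_0,V_1)=2$ rather than $3$. As written, your argument does not establish \eqref{eq:ext-v0-v1}.
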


\pf Let $0\rightarrow {V_1}\overset{\varphi}\rightarrow U\overset{\psi}\rightarrow V_0 \rightarrow 0$ 
be an exact sequence in $\lmod{\,\mathfrak{u}(\mgo)}$. We identify $V_1$ with a subspace of $U$. 
Recall the basis $\{v_i: i \in \I_3\}$ of $V_1$ implementing the action of $\mathfrak{u}(\mgo)$ by \eqref{mod-simple-3}. 
Fix $w_0 \in V_0 \backslash 0$.
Let $v_0\in U$ such that $\psi(v_0) = w_0$; then  $\{v_i: i \in \I_{0,3}\}$ is a basis of $U$ and 
$bv_0 \in V_1$. Thus there are  $\kappa,\lambda, \mu\in \ku$ such that 
\[bv_0 = \kappa v_1+ \lambda v_2 + \mu v_3.\]
Let $m_a$ denote the minimal polynomial of $a_{\vert U}$; then $m_a = X^3$ or $X^4$. 
\begin{step} There exists $u\in U$ such that $au = v_1$ (i.e., $m_a = X^4$). 
\end{step}
Let  $\vartheta\in \kut$ be such that $\psi(u) = \vartheta^{-1} w_0$; set $v_0 = \vartheta u$ so that $\psi(v_0) = w_0$
and $a v_0 = \vartheta v_1$.
Now $cv_0 = (ab+ba)v_0 = \kappa v_2 + \lambda v_3 $; hence $\kappa =0$ because 
\begin{align*}
v_1 &= \vartheta^{-1} av_0 = \vartheta^{-1} (ac+ca)v_0 = \vartheta^{-1} \kappa v_3 + v_1. 
\end{align*}

\begin{step} There exists $u\in U \backslash V_1$ such that $au= 0$ (i.e. $m_a = X^3$). 
\end{step}

Let $v_0 \in \ku \{u\}$ be such that $\psi(v_0) = w_0$. Then $cv_0 = \kappa v_2 + \lambda v_3 $ and
$0 = av_0 = (ac+ca)v_0 = a(\kappa v_2 + \lambda v_3)  = \kappa v_3$; thus $\kappa = 0$.
Here we set $\vartheta = 0$.

\medbreak
In both cases $U\simeq V_{\vartheta, \lambda,\mu}$ by identifying $\{v_i: i \in \I_{0,3}\}$ to the canonical  basis.
Assume next that there exists an isomorphism $T: V_{\vartheta, \lambda,\mu} \to 
V_{\widetilde{\vartheta}, \widetilde{\lambda},\widetilde{\mu}}$ such that the following diagram commutes:
\begin{align*}
\xymatrix{ 0  \ar@{->}[r] & V_1 \ar@{->}[r]  \ar@{=}[d] 
& V_{\vartheta, \lambda,\mu} \ar@{->}[r]  \ar@{->}^{T}[d] &V_0 \ar@{->}[r]  \ar@{=}[d] \ar@{->}[r]  &0\\
0  \ar@{->}[r] & V_1 \ar@{->}[r]  
& V_{\widetilde{\vartheta}, \widetilde{\lambda},\widetilde{\mu}} \ar@{->}[r]  
&V_0 \ar@{->}[r]  \ar@{->}[r]  &0.}
\end{align*} 
Then $T(v_i) = v_i$, $i\in \I_3$, while $T(v_0) = v_0 +  \nu v_1+ \xi v_2 + \zeta v_3 $.
Since $T$ preserves the action, we get $\nu = \xi = 0$,  $\vartheta = \widetilde{\vartheta}$,
$ \lambda = \widetilde{\lambda} + \zeta$, $\mu = \widetilde{\mu}$; this implies \eqref{eq:ext-v0-v1}.
\epf 

Clearly  $V_{\vartheta, \lambda,\mu}$ is indecomposable unless $(\vartheta, \lambda,\mu) = 0$.

Let $L$  be a Hopf algebra and $M\in \lmod{L}$. Let  $M^*, {}^*M \in \lmod{L}$  denote
the linear dual of $M$ with the action given by $x\cdot \alpha = \alpha \circ \Ss(x)$, respectively 
$x\cdot \alpha = \alpha \circ \Ss^{-1}(x)$, for $x\in L$, $\alpha \in \Hom(M, \ku)$. 
Since the contravariant functors $M \mapsto M^*$ and $M \mapsto {}^* M$ are exact, we have natural isomorphisms
\begin{align}
\ext^1_{L}(M, N) &\simeq \ext^1_{L}(N^*, M^*)  \simeq \ext^1_{L}({}^*N, {}^*M) ,& N, M \in \lmod{L}.
\end{align}

\begin{lemma}\label{lema:ext-v1-v0}
If $U$ is an extension of $V_0$ by $V_1$, then $U\simeq V^*_{\vartheta,\lambda, \mu}$, for some
$\vartheta,\lambda, \mu \in \ku$.  Also,
$\dim \ext^1_{\mathfrak{u}(\mgo)}(V_1,V_0)=2$. \qed
\end{lemma}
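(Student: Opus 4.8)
The plan is to obtain Lemma \ref{lema:ext-v1-v0} as a formal consequence of Lemma \ref{lema:ext-v0-v1} using the duality functors recorded just above the statement. The key point is that $\mathfrak{u}(\mgo)$ is involutory (indeed spherical, as noted in \S\ref{subsection:restricted}), so $\Ss^2 = \id$ and the two functors $M \mapsto M^*$ and $M \mapsto {}^*M$ coincide; moreover the simple modules are self-dual, namely $V_0^* \simeq V_0$ trivially and $V_1^* \simeq V_1$ by the Remark following Proposition \ref{prop:simple modules}.

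First I would apply the natural isomorphism $\ext^1_{\mathfrak{u}(\mgo)}(M,N) \simeq \ext^1_{\mathfrak{u}(\mgo)}(N^*, M^*)$ with $M = V_1$, $N = V_0$. Combined with $V_0^* \simeq V_0$ and $V_1^* \simeq V_1$ this gives
\[
\ext^1_{\mathfrak{u}(\mgo)}(V_1, V_0) \simeq \ext^1_{\mathfrak{u}(\mgo)}(V_0, V_1),
\]
and the right-hand side has dimension $2$ by \eqref{eq:ext-v0-v1} in Lemma \ref{lema:ext-v0-v1}. This already yields $\dim \ext^1_{\mathfrak{u}(\mgo)}(V_1,V_0)=2$.

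For the structural statement, I would argue that the contravariant exact functor $M \mapsto M^*$ sends a short exact sequence $0 \to V_0 \to U \to V_1 \to 0$ (an extension of $V_1$ by $V_0$) to a short exact sequence $0 \to V_1^* \to U^* \to V_0^* \to 0$, i.e. $U^*$ is an extension of $V_0^*\simeq V_0$ by $V_1^*\simeq V_1$; by Lemma \ref{lema:ext-v0-v1} we have $U^* \simeq V_{\vartheta,\lambda,\mu}$ for suitable $\vartheta, \lambda, \mu \in \ku$, hence $U \simeq U^{**} \simeq V_{\vartheta,\lambda,\mu}^*$, using $\Ss^2 = \id$ to identify $U^{**}$ with $U$. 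Note: the statement as printed says "extension of $V_0$ by $V_1$," which should read "extension of $V_1$ by $V_0$" to be consistent with the conclusion $U \simeq V^*_{\vartheta,\lambda,\mu}$; I would fix that typo in passing.

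There is essentially no obstacle here: the proof is a two-line diagram chase once the self-duality of the simples and the involutory property are in hand, which is exactly why the authors write \qed after the statement. The only mild care needed is bookkeeping of which functor ($(-)^*$ versus ${}^*(-)$) is used and checking that it is exact and involutive on $\lmod{\mathfrak{u}(\mgo)}$ — both of which follow from $\Ss$ bijective and $\Ss^2 = \id$ — and confirming $V_1 \simeq V_1^*$, already recorded in the excerpt.
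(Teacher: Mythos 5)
Your proposal is correct and is exactly the argument the paper intends: the \qed after the statement signals that the lemma follows from the natural isomorphism $\ext^1_{L}(M,N)\simeq \ext^1_{L}(N^*,M^*)$ recorded immediately above, together with $V_0^*\simeq V_0$, $V_1^*\simeq V_1$, the involutory property of $\mathfrak{u}(\mgo)$, and Lemma \ref{lema:ext-v0-v1}, which is precisely what you carry out. Your observation that the hypothesis should read ``extension of $V_1$ by $V_0$'' (i.e.\ $0\to V_0\to U\to V_1\to 0$) to match the conclusion is also correct.
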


\begin{lemma} $\ext^1_{\mathfrak{u}(\mgo)}(V_1,V_1)=0$. 
\end{lemma}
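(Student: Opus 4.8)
The plan is to show directly that any short exact sequence $0\to V_1\xrightarrow{\varphi} U\xrightarrow{\psi} V_1\to 0$ in $\lmod{\mathfrak{u}(\mgo)}$ splits, so that the self-extension is trivial. I would identify $V_1$ with $\varphi(V_1)\subseteq U$, equipped with its basis $\{v_1,v_2,v_3\}=\{b,c,a\}$ realizing the matrices $\Att,\Btt,\Ctt$ of \eqref{mod-simple-3}, and choose a lift $\{u_1,u_2,u_3\}$ of the standard basis of the quotient copy of $V_1$, so that $\{v_1,v_2,v_3,u_1,u_2,u_3\}$ is a basis of $U$. Relative to this basis the action of each of $a,b,c$ on $U$ is block upper-triangular, with diagonal blocks $\Att,\Btt,\Ctt$ respectively and an off-diagonal $3\times 3$ block recording how the $u_i$ map into $V_1$; splitting the extension amounts to adjusting the $u_i$ by elements of $V_1$ so that all three off-diagonal blocks vanish simultaneously. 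The cohomological content is that the resulting cocycle, which lives in $\Hom_{\ku}(V_1,V_1)\cong V_1^*\otimes V_1$ with the natural $\mathfrak{u}(\mgo)$-action, is a coboundary.

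The cleanest route is to exploit the structural constraints already available rather than to solve the cocycle equations by brute force. First, by Lemma \ref{block}, $\rho_U(a)$ has no Jordan block of size $2$; since $\rho_{V_1}(a)=\Att$ is a single size-$3$ Jordan block (nilpotent of order exactly $3$) and the diagonal of $\rho_U(a)$ consists of two such blocks, the possible Jordan types of the nilpotent operator $\rho_U(a)$ on the $6$-dimensional space $U$ are severely limited: it must be a sum of Jordan blocks of sizes in $\{1,3,4,\dots\}$ with total size $6$ and $\ker\rho_U(a)^3$ of codimension $0$; combined with the fact that $\rho_U(a)^3$ already kills $V_1$ and induces $\Att^3=0$ on the quotient, one gets $\rho_U(a)^3=0$, hence the Jordan type of $\rho_U(a)$ is $(3,3)$. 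This means I may choose the lift $u_3$ of the top basis vector of the quotient so that $a u_3=0$ and $a^2 u_3, a u_3$-type relations hold exactly as in $V_1$, i.e. after replacing $u_1,u_2$ by $a^2 u_3, a u_3$ I may arrange $a u_i = v_i$-pattern identically: concretely $a u_3 = u_2$... — more precisely, I would pick $u_3$ with $\psi$-image the correct basis vector and $\rho_U(a)^3 u_3 = 0$ automatically, then set $u_2 := a u_3$, $u_1 := a u_2$, giving $a u_1 = 0$. With this choice the entire off-diagonal block of $\rho_U(a)$ is zero.

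It then remains to handle $b$ and $c$. Having normalized so that $a$ acts "diagonally", write $b u_i = \sum_j \beta_{ij} v_j$ for the off-diagonal block $B=(\beta_{ij})$ of $\rho_U(b)$, and similarly $c u_i = \sum_j \gamma_{ij} v_j$. The relations \eqref{rel-u(L)} — namely $ab+ba=c$, $ac+ca=a$, $bc+cb=b$, $a^4=b^4=0$, $c^2+c=0$ — together with $a u_i$ now being "clean" force strong linear constraints on $B$ and on $(\gamma_{ij})$; in particular $ab+ba=c$ reads, on the off-diagonal blocks, $\Att B + B\Btt = \Gamma$ (writing $\Gamma=(\gamma_{ij})$) which pins $\Gamma$ down from $B$, and $ac+ca=a$ gives $\Att\Gamma+\Gamma\Att = 0$ on the off-diagonal level, i.e. $\Gamma$ commutes with $\Att$, so $\Gamma \in \ku\{I,\Att,\Att^2\}$. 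Feeding this back through $bc+cb=b$ and $b^4=0$ (using the explicit form of $\Btt$) should leave only a one-parameter family for $B$, which one then checks is exactly the image of the coboundary map $V_1\to\Hom(V_1,V_1)$, $v\mapsto(x\mapsto \rho(x)v - \text{...})$ restricted appropriately — equivalently, one exhibits $u_i' = u_i + (\text{correction in }V_1)$ killing all the remaining off-diagonal blocks.

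The main obstacle I anticipate is the bookkeeping in this last paragraph: verifying that the surviving parameters in the $b$- and $c$-blocks after imposing the relations really do all come from coboundaries, rather than contributing a nonzero $\ext^1$. A slicker alternative, which I would try first to avoid the matrix grind, is to invoke self-duality $V_1^*\simeq V_1$ (noted in the remark after Proposition \ref{prop:simple modules}) together with the natural isomorphism $\ext^1_{\mathfrak{u}(\mgo)}(V_1,V_1)\simeq\ext^1_{\mathfrak{u}(\mgo)}(V_1^*,V_1^*)\simeq\ext^1_{\mathfrak{u}(\mgo)}(V_1,V_1)$ — this gives no immediate vanishing, but combined with the Euler-characteristic / Cartan-matrix information from Theorem \ref{mainthm:indec} (the projective cover $P(V_1)$ and the special biserial structure) one reads off $\dim\ext^1(V_1,V_1)$ directly from the quiver having no loop at the vertex for $V_1$. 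If that quiver description is available at this point in the paper it trivializes the lemma; otherwise the direct computation above, anchored by Lemma \ref{block}, is the way to go.
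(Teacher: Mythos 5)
Your strategy coincides with the paper's: lift the quotient copy of $V_1$, use Lemma \ref{block} to normalize the lift so that the off-diagonal block of $a$ vanishes (the paper sets $v_5=av_4$, $v_6=av_5$ and shows $av_6=0$ by exactly the Jordan-block argument you invoke), and then try to remove the off-diagonal blocks of $b$ and $c$ by a further change of lift. The problem is that you stop exactly where the content of the lemma begins. You list the constraints coming from $ab+ba=c$, $ac+ca=a$, $bc+cb=b$, guess that they ``should leave only a one-parameter family for $B$'', and assert that ``one then checks'' the survivors are coboundaries --- and you yourself flag this verification as the main obstacle. That step cannot be waved through: for this same algebra $\dim\ext^1_{\mathfrak{u}(\mgo)}(V_0,V_1)=2\neq 0$, so constraints of this shape do not automatically force splitting; only the explicit computation decides, and your parameter count is in fact off (the relations leave a two-parameter family, $\vartheta_1,\vartheta_2$ in the paper's notation, and the splitting is exhibited concretely by the complement $Z=\ku\{v_4+\vartheta_1 v_2+\vartheta_2 v_3,\ v_5+\vartheta_1 v_3,\ v_6\}$). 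A secondary slip: your derivation of $\rho_U(a)^3=0$ is not valid as written, since ``$\rho_U(a)^3$ kills $V_1$ and induces $\Att^3=0$ on the quotient'' only yields $\rho_U(a)^6=0$; to exclude a block of size $4$ you need $a^4=0$ together with a rank count ($\operatorname{rank}\rho_U(a)\geq 2\operatorname{rank}\Att=4$, hence at most two Jordan blocks), or the paper's direct argument that $a^2v_6=a^4v_4=0$ forces $av_6=\lambda v_3$ with $\lambda=0$.

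Your proposed shortcut is also circular: the Gabriel quiver \eqref{eq:quiver}, and in particular the absence of a loop at the vertex of $V_1$, is \emph{defined} by the dimensions $\dim\ext^1_{\mathfrak{u}(\mgo)}(V_i,V_j)$ --- including the one this lemma computes --- and Theorem \ref{mainthm:indec}, the projective covers and the special biserial structure are all established downstream of these Ext computations. So the direct linear-algebra verification is not optional here; it is the proof.
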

\pf 
Let $0\rightarrow {V_1}\overset{\varphi}\rightarrow U\overset{\psi}\rightarrow V_1 \rightarrow 0$ be an exact sequence
in $\lmod{\,\mathfrak{u}(\mgo)}$; identify $V_1$ with a subspace of $U$. 
Recall the basis $\{v_i: i \in \I_3\}$ of $V_1$ implementing the action of $\mathfrak{u}(\mgo)$ by \eqref{mod-simple-3}. 
Pick $v_4\in U$ such that  $\psi(v_{4}) = v_1$ and set $v_5 = av_4$, $v_6 = av_5$. Thus 
$\psi(v_{5}) = v_2$ and $\psi(v_{6}) = v_3$. Hence  $\{v_i: i \in \I_6\}$ is a basis of $U$. 
Since $av_3 = 0$ and $a^2 v_6 =  a^4v_4= 0$, then  $av_6 \in V_1$ and $av_6  = \lambda v_3$ for some $\lambda\in \ku$.
If $\lambda \neq 0$, then $\ku\{v_1 + \lambda^{-1}v_5, v_2 + \lambda^{-1}v_6\}$ provides a Jordan block of size $2$ for
the action of $a$, contradicting Lemma \ref{block}. Thus $av_6  = 0$.
Since $\psi$ is a morphism, there are  $\vartheta_i,\lambda_i,\mu_i \in \ku$, $i \in \I_{3}$, such that 
\begin{align*}
& bv_4=\sum_{i \in \I_3} \vartheta_i v_i, & & bv_5= v_4+ \sum_{i \in \I_3} \lambda_i v_i, & & bv_6= v_5+\sum_{i \in \I_3} \mu_i v_i.
\end{align*}

By the relation $c=ab+ba$, we have that 
\begin{align*}
&cv_4 =  \lambda_1v_1+(\vartheta_1+\lambda_2)v_2+(\vartheta_2+\lambda_3)v_3+v_4,\\
&cv_5=\mu_1v_1+(\lambda_1+\mu_2)v_2+(\mu_3+\lambda_2)v_3,\\
&cv_6 = v_6 + \mu_1v_2+\mu_2v_3.
\end{align*}
From $a=ac+ca$, we obtain that $\lambda_1=\mu_1=0$ and $\mu_3=\vartheta_1$. 
Also, since $b=bc+cb$, we get that $\mu_2 =\lambda_2 =\vartheta_3=0$, $\vartheta_2 = \lambda_3$. 
We conclude that  $U = V_1 \oplus Z$, where $Z \coloneqq  \ku\{v_4+\vartheta_1 v_2+ \vartheta_2 v_3, v_5+\vartheta_1v_3, v_6\}$,  
is an $\mathfrak{u}(\mgo)$-submodule.
\epf

The {\it Gabriel quiver} of $\mathfrak{u}(\mgo)$ is the quiver $\ext Q(\mathfrak{u}(\mgo))$ with vertices $\{0,1\}$ 
and $\dim \ext^{1}_{\mathfrak{u}(\mgo)}(S_i,S_j)$ arrows from the vertex $i$ to the vertex $j$, that is

\begin{align}\label{eq:quiver}  \xymatrix{0\ar@/^1pc/[rr] \ar@/^2pc/[rr]&& 1 . \ar@/^1pc/[ll]\ar@/^2pc/[ll]} \\ \notag
\end{align}

Thus the separated quiver of $\ext Q(\mathfrak{u}(\mgo))$ is a disjoint union of two Dynkin diagrams of type $B_2$. 
See \cite[X.2.6]{ars}. In the next subsection we will show that the representation type of  $\mathfrak{u}(\mgo)$
is actually tame.

\subsection{Projective covers of simple modules}
Let $V_0$ and $V_1$ be the simple $\mathfrak{u}(\mgo)$-modules as in $\S\,$\ref{sub:irred-rep-u(g)}. 
Let $P(V_i)$ and $I(V_i)$ denote the projective cover and the injective envelope of $V_i$, $i\in \I_{0,1}$, 
respectively. By \cite[Corollary 8.4.3]{Ra}, $\mathfrak{u}(\mgo)$ is a Frobenius algebra; hence  
it is self-injective and $P(V_i) \simeq I(V_i)$, $i \in \I_{0,1}$. In order to determine $P(V_0)$, 
we define the eight-dimensional $\mathfrak{u}(\mgo)$-module $M$ with basis $\{v_i,w_i\,:\,i\in \I_{4}\}$ 
and action given by
\begin{align*}
&av_i=v_{i+1},\,\, i\in\I_{3},& & bv_i=0,\,\, i=1,4, & & c{v_i}=v_i,\,\, i=1,3,  \\
&av_4=0, & &bv_{i}=v_{i-1},\,\,  i\in \I_{2,3}, & &cv_i=0,\,\, i=2,4, \\
&aw_i=w_{i+1},\,\, i\in\I_{3},& & bw_i=v_{i+2},\,\, i\in \I_{2}, & & cw_i=0,\,\, i=1,3,\\
&aw_4=0,& & bw_{i}=w_{i-1},\,\, i \in \I_{3,4}, & & c{w_{i}}=w_{i},\,\, i=2,4. 
\end{align*}
We can see the action of $\mathfrak{u}(\mgo)$ on $M$ by a directed graph. The arrows 
oriented from left to right, indicate the action of $a$ while the 
arrows oriented from right to left represent the action of $b$. Also, $\circ_{u}$ means $cu=0$ while $\bullet_{u}$ means $cu=u$, $u\in M$. Then the directed graph associated to $M$ is
\begin{align}\label{graph-1}
\begin{aligned} 
{\tiny\xymatrix@C=10mm@R=9mm{   
& & & &\circ_{v_4}&&& \\
&\bullet_{v_1} \ar@/^1pc/[r]
&\circ_{ v_2} \ar@/^1pc/[r] \ar@/^1pc/[l]
&\bullet_{v_3} \ar@/^1pc/[l] \ar@{->}@/^1pc/[ur]
& &\bullet_{w_2}\ar@/^1pc/[r] \ar@{->}@/^1pc/[ul]
&\circ_{w_3}\ar@/^1pc/[r]\ar@/^1pc/[l]
&\bullet_{w_4}.\ar@/^1pc/[l]\\
& & & &\circ_{w_1}\ar@{->}@/^1pc/[ur] \ar@{->}@/^1pc/[ul]&&&}}\end{aligned} \end{align}

It is clear that the vector subspaces $M_0=\Bbbk\left\{ v_4\right\}$, $M_1=\Bbbk\left\{ v_i:\,i\in \I_{4}\right\}$ and $M_2\coloneqq M_1\oplus\Bbbk\left\{w_i: \,i\in \I_{2,4} \right\}$ 
are $\mathfrak{u}(\mgo)$-submodules of $M$.

\begin{lemma}\label{comp-series-M} Keep the notation above.
\begin{enumerate}[leftmargin=*,label=\rm{(\roman*)}]
\item $M_0\subseteq M_1\subseteq M_2\subseteq M$ is a composition series of $M$ with  factors 
\begin{align*}
&M_0\simeq V_0,& &M_1/M_0\simeq V_1,&&M_2/M_1\simeq V_1,& &M/M_2\simeq V_0.& 
\end{align*}
\item $\soc M=V_0$. In particular, $M$ is an indecomposable $\mathfrak{u}(\mgo)$-module.\vspace{.1cm}
\item  $\rad M=M_2$. \vspace{.1cm}
\item $\dim P(V_0)\geq \dim M=8$.
\end{enumerate} 
\end{lemma}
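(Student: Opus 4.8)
The plan is to prove \textit{(i)--(iv)} in order, using the explicit matrices $\Att,\Btt,\Ctt$ of \eqref{mod-simple-3} for $V_1$, the tables defining the action on $M$ above, the basic socle/radical formalism, and the fact—recalled just before the statement—that $\mathfrak{u}(\mgo)$ is self-injective.

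\textbf{Part (i).} The subquotients $M_0$, $M_1/M_0$, $M_2/M_1$, $M/M_2$ have $\ku$-dimensions $1,3,3,1$, so by Proposition \ref{prop:simple modules} it suffices to produce isomorphisms $M_0\simeq V_0\simeq M/M_2$ and $M_1/M_0\simeq V_1\simeq M_2/M_1$. That $a,b,c$ act by $0$ on $M_0=\ku\{v_4\}$ and on the class $\overline{w_1}$ spanning $M/M_2$ is immediate from the tables. For $M_1/M_0$ one matches the classes of $v_1,v_2,v_3$ with the basis $\{b,c,a\}$ of $\mfG=V_1$ and checks, from the defining formulas for $M$, that $a,b,c$ act by $\Att,\Btt,\Ctt$ (equivalently, by the adjoint action, via \eqref{lie-bracket} and Remark \ref{rem:adjoint-representation}); for $M_2/M_1$ one does the same with $w_2,w_3,w_4$ instead. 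Since all four factors are simple, the chain is a composition series.

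\textbf{Part (ii).} By Proposition \ref{prop:simple modules}, $\soc M\simeq V_0^{\,n_0}\oplus V_1^{\,n_1}$ with $n_i=\dim\Hom_{\mathfrak{u}(\mgo)}(V_i,M)$. Now $\Hom_{\mathfrak{u}(\mgo)}(V_0,M)=\{m\in M:am=bm=cm=0\}$; solving $am=0$ and then $bm=0$ on the basis of $M$ identifies this space with $\ku\{v_4\}$, so $n_0=1$. As $V_1=\mathfrak{u}(\mgo)\cdot b$ is cyclic, a nonzero map $g\colon V_1\to M$ would produce $u:=g(b)\in M$ with $bu=0$, $cu=u$ and $a^3u=0$ (the last because $a^3$ annihilates $V_1$); the first two conditions force $u\in\ku\{v_1\}$, yet $a^3u\neq0$ for such $u\neq0$. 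Hence $n_1=0$ and $\soc M=M_0\simeq V_0$. Since a nonzero finite-dimensional module has nonzero socle, a nontrivial direct-sum decomposition of $M$ would give $\dim\soc M\geq2$; thus $M$ is indecomposable.

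\textbf{Part (iii).} As $M/M_2\simeq V_0$ is semisimple, $\rad M\subseteq M_2$, so it suffices to show $M_2$ is the unique maximal submodule of $M$. A maximal submodule $N$ satisfies $M/N\simeq V_0$ or $V_1$. The case $M/N\simeq V_1$ is excluded by $\Hom_{\mathfrak{u}(\mgo)}(M,V_1)=0$: since $M=\mathfrak{u}(\mgo)\cdot w_1$ is cyclic with $cw_1=0$, a map $f\colon M\to V_1$ sends $w_1$ into $\ker(c|_{V_1})=\ku\{c\}$, say $f(w_1)=\mu c$; then $f(v_3)=f(bw_1)=\mu\Btt c=\mu b$ and $f(v_2)=f(b^2w_1)=\mu\Btt^{2}c=0$, while $v_3=av_2$ in $M$ forces $f(v_3)=\Att f(v_2)=0$, hence $\mu=0$ and $f=0$. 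The case $M/N\simeq V_0$ gives $N=\ker\phi$ for a nonzero $\phi\in\Hom_{\mathfrak{u}(\mgo)}(M,V_0)$; this Hom-space consists of the linear functionals on $M$ vanishing on $aM+bM+cM$, and a direct computation shows $aM+bM+cM=M_2$, so $\dim\Hom_{\mathfrak{u}(\mgo)}(M,V_0)=1$ and $N=M_2$. Therefore $\rad M=M_2$.

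\textbf{Part (iv).} By (iii), $M/\rad M=M/M_2\simeq V_0$, and by (ii) $\soc M\simeq V_0$. Since $\mathfrak{u}(\mgo)$ is self-injective, $P(V_0)\simeq I(V_0)$; and $M$, having simple socle $\simeq V_0$, embeds into its injective envelope $I(\soc M)=I(V_0)\simeq P(V_0)$. Hence $\dim P(V_0)\geq\dim M=8$. (Equivalently: $M$ has simple top $V_0$, so the projective cover $P(V_0)\twoheadrightarrow V_0$ lifts through $M\twoheadrightarrow M/\rad M$ to an epimorphism $P(V_0)\twoheadrightarrow M$ by projectivity and Nakayama.)

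\textbf{Main obstacle.} The crux is part (iii): everything turns on the two Hom computations—above all the vanishing $\Hom_{\mathfrak{u}(\mgo)}(M,V_1)=0$ and the identity $aM+bM+cM=M_2$—which must be carried out carefully using the relations $c^2=c$, $ab+ba=c$, $ca=ac+a$, $cb=bc+b$ on $V_1$ and $M$. Parts (i), (ii) and (iv) are routine once (iii) is in place.
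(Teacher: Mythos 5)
Your proof is correct and takes essentially the same route as the paper: items (i)--(iii) are the ``direct calculations'' the paper leaves to the reader (your Hom-space computations and the identity $aM+bM+cM=M_2$ check out against the defining tables for $M$ and the matrices $\Att,\Btt,\Ctt$), and your argument for (iv), embedding $M$ into $I(V_0)\simeq P(V_0)$ via $\soc M\simeq V_0$ and self-injectivity, is exactly the paper's.
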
 

\begin{proof}
The items (i), (ii) and (iii) are obtained through direct calculations.  For (iv), consider $\vartheta:V_0\to M$  the inclusion given by (i)
and let $\iota: V_0\to I(V_0)$ be the canonical  inclusion. 
Since $I(V_0)$ is the injective envelope of $V_0$,  there exists a morphism $\kappa:M\to I(V_0)$ 
such that $\kappa \vartheta=\iota$. 
Let $K=\ker \kappa$. 
If  $K \neq 0$, then $0\neq \soc K\subseteq \soc M=\vartheta (V_0)$. 
But $\kappa \vartheta=\iota$, hence $K\cap \vartheta (V_0)=0$, which is a contradiction. 
Therefore, $\kappa$ is injective and the result follows.
\end{proof}

Next let $N\in \lmod{\mathfrak{u}(\mgo)}$ with basis $\{v_i,w_i\,:\,i\in \I_{4}\}$  and action given by 
\begin{align*}
&av_i=v_{i+1},\,\, i\in\I_{3},& & bv_i=0,\,\, i\in \I_2, & & c{v_i}=0,\,\, i=1,3,  \\
&av_4=0, & &bv_{i}=v_{i-1},\,\,  i\in \I_{3,4}, & &cv_i=v_i,\,\, i=2,4, \\
&aw_i=w_{i+1},\,\, i\in\I_{3},& & bw_i=v_{i},\,\, i=1,4, & & cw_i=w_i,\,\, i=1,3,\\
&aw_4=0,& & bw_{i}=v_i+w_{i-1},\,\, i \in \I_{2,3}, & & c{w_{i}}=0,\,\, i=2,4. 
\end{align*}
As in (\ref{graph-1}), we can see the action of $\mathfrak{u}(\mgo)$ on the module $N$ by the directed graph \vspace{.5cm}
\begin{align}
\begin{aligned} \label{graph}
{\tiny\xymatrix@C=16mm@R=9mm{
& &\bullet_{v_2} \ar@/^1pc/[r]
&\circ_{v_3} \ar@/^1pc/[r] \ar@/^1pc/[l]
&\bullet_{ v_4} \ar@/^1pc/[l]
& \\
&\circ_{v_1}\ar@{->}@/^1pc/[ur]
&
&
&
&\circ_{w_4}.\ar@{->}@/^1pc/[ul]\\
&&\bullet_{w_1}\ar@/^1pc/[r] \ar@{->}@/^1pc/[ul]
&\circ_{w_2}\ar@/^1pc/[r]\ar@/^1pc/[l]\ar@{->}@/^/[luu]
&\bullet_{w_3}\ar@/^1pc/[l]\ar@{->}@/^/[luu]\ar@{->}@/^1pc/[ur]
&\\
&}}\end{aligned} \end{align} 
Now  $N_0= \Bbbk\left\{v_i:i\in \I_{2,4}\right\}$, $N_1=N_0\oplus \Bbbk\left\{v_1 \right\}$ and $N_2=N_1\oplus\Bbbk\left\{w_4\right\}$ are submodules of $N$.
Similar to Lemma \ref{comp-series-M}, we have the following result.

\begin{lemma}\label{comp-series-N} Keep the notation above.
\begin{enumerate}[leftmargin=*,label=\rm{(\roman*)}]
\item $N_0\subseteq N_1\subseteq N_2\subseteq N$ is a composition series of $N$ with factors 
\begin{align*}
&N_0\simeq V_1,& &N_1/N_0\simeq V_0,&&N_2/N_1\simeq V_0,& &N/N_2\simeq V_1.& 
\end{align*} 
\item $\soc N=V_1$. In particular, $N$ is an indecomposable $\mathfrak{u}(\mgo)$-module.\vspace{.1cm}
\item $\rad N=N_2$. \vspace{.1cm}
\item $\dim P(V_0)\geq \dim N=8$. \qed
\end{enumerate} 
\end{lemma}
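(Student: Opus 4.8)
The plan is to follow the proof of Lemma~\ref{comp-series-M} almost verbatim. Items (i)--(iii) will be checked by direct computation on the basis $\{v_i,w_i:i\in\I_4\}$, and item (iv) will be deduced from (ii) together with the self-injectivity of $\mathfrak{u}(\mgo)$, as in Lemma~\ref{comp-series-M}(iv). For (i) I would first apply $a$, $b$ and $c$ to the basis vectors spanning $N_0$, $N_1$ and $N_2$ and read off from the defining action that each of these subspaces is $\mathfrak{u}(\mgo)$-stable; since $\dim N_0=3$, $\dim N_1=4$, $\dim N_2=5$ and $\dim N=8$, the successive quotients have dimensions $3$, $1$, $1$, $3$. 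The two one-dimensional quotients $N_1/N_0$ and $N_2/N_1$ are then $\simeq V_0$ by Proposition~\ref{prop:simple modules}. For the two three-dimensional quotients one computes the action of $a$, $b$ and $c$ on the induced bases of $N_0$ and of $N/N_2$ and checks that, after the relabellings $v_2,v_3,v_4\mapsto v_1,v_2,v_3$ and $\overline{w_1},\overline{w_2},\overline{w_3}\mapsto v_1,v_2,v_3$, the resulting matrices are precisely $\Att$, $\Btt$, $\Ctt$ of \eqref{mod-simple-3}; hence $N_0\simeq V_1\simeq N/N_2$, these factors are simple, and the chain is a composition series.

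For (ii) I would argue first that a nonzero $u=\sum_i\alpha_iv_i+\sum_i\beta_iw_i$ spanning a trivial submodule satisfies $au=bu=cu=0$; the condition $au=0$ forces $\alpha_1=\alpha_2=\alpha_3=\beta_1=\beta_2=\beta_3=0$, and then $bu=\alpha_4v_3+\beta_4v_4=0$ forces $\alpha_4=\beta_4=0$, so $u=0$ and $N$ has no submodule isomorphic to $V_0$. Next, if $S\subseteq N$ is a submodule with a basis $\{u_1,u_2,u_3\}$ realizing the $V_1$-action \eqref{mod-simple-3}, then since $\ker\rho_N(a)=\ku\{v_4,w_4\}$ and $c$ acts there by $cv_4=v_4$, $cw_4=0$, the relations $au_3=0$, $cu_3=u_3$ force $u_3\in\ku v_4$, and solving the remaining relations of $V_1$ then forces $u_2\in\ku v_3$ and $u_1\in\ku v_2$, so $S=N_0$. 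Therefore $\soc N=N_0\simeq V_1$ is simple; a nontrivial decomposition $N=A\oplus B$ would produce the decomposable socle $\soc A\oplus\soc B$, a contradiction, so $N$ is indecomposable.

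For (iii), $\rad N\subseteq N_2$ since $N/N_2\simeq V_1$ is simple, while $N_2\subseteq\rad N$ is checked by a direct computation of $\Jac(\mathfrak{u}(\mgo))\cdot N$ (conveniently, $N$ is cyclic with generator $w_1$); thus $\rad N=N_2$. For (iv): by \cite[Corollary~8.4.3]{Ra} the algebra $\mathfrak{u}(\mgo)$ is Frobenius, hence self-injective, so $I(V_1)\simeq P(V_1)$. Let $\vartheta\colon V_1\hookrightarrow N$ be the inclusion identifying $V_1$ with $\soc N=N_0$, and let $\iota\colon V_1\hookrightarrow I(V_1)$ be the canonical inclusion; injectivity of $I(V_1)$ yields $\kappa\colon N\to I(V_1)$ with $\kappa\vartheta=\iota$. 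If $\ker\kappa\neq0$ then $0\neq\soc(\ker\kappa)\subseteq\soc N=\vartheta(V_1)$, which contradicts $\ker\kappa\cap\vartheta(V_1)=0$; hence $\kappa$ is injective and $\dim P(V_1)=\dim I(V_1)\geq\dim N=8$. Together with Lemma~\ref{comp-series-M}(iv), which gives $\dim P(V_0)\geq8$, and the decomposition $\mathfrak{u}(\mgo)\simeq P(V_0)\oplus P(V_1)^{\oplus3}$ of the regular module (so $2^5=\dim P(V_0)+3\dim P(V_1)$), this forces $\dim P(V_0)=\dim P(V_1)=8$, and therefore $M\simeq P(V_0)$ and $N\simeq P(V_1)$.

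The step I expect to be the main obstacle is (ii): the bookkeeping that shows $N_0$ is the \emph{only} simple submodule of $N$ — in particular that there is no hidden copy of $V_1$ — where the eigenspace decomposition of $N$ for the idempotent $c$, together with the description of $\ker\rho_N(a)$, does the work. Everything else is either a short direct verification on the given basis or a repetition of the argument used for Lemma~\ref{comp-series-M}(iv).
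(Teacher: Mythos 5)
Your proof is correct and follows exactly the route the paper intends: the paper's own proof is just ``similar to Lemma~\ref{comp-series-M}'', i.e.\ direct verification on the given basis for (i)--(iii) and the injective-envelope argument via $\soc N$ for (iv), which is precisely what you carry out (your computations of the submodule chain, of $\ker\rho_N(a)=\ku\{v_4,w_4\}$, and of the socle all check out). You also correctly read item (iv) as the bound $\dim P(V_1)=\dim I(V_1)\geq\dim N=8$ — the ``$P(V_0)$'' in the printed statement is a typo — which is exactly what Proposition~\ref{proje-cover} uses.
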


\begin{prop} \label{proje-cover} Let $V_0$ and $V_1$ be the simple $\mathfrak{u}(\mgo)$-modules. Then,
\begin{align*}
&P(V_0)\simeq I(V_0) \simeq M,&&P(V_1) \simeq I(V_1) \simeq N.&
\end{align*}
\end{prop}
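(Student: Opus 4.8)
The plan is to identify $P(V_0)$ with $M$ and $P(V_1)$ with $N$ by a dimension-counting argument that pins down all four projective covers simultaneously. First I would recall from Lemma \ref{comp-series-M}(iv) and Lemma \ref{comp-series-N}(iv) that $\dim P(V_0) \geq 8$ and $\dim P(V_1) \geq 8$; the second inequality follows because $N$ has a submodule isomorphic to $V_1$ in its socle, so by the same injective-envelope argument as in Lemma \ref{comp-series-M}(iv), $I(V_1)$ contains a copy of $N$, and $I(V_1) \simeq P(V_1)$ since $\mathfrak{u}(\mgo)$ is self-injective (Frobenius, by \cite[Corollary 8.4.3]{Ra}). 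Thus $\dim P(V_0), \dim P(V_1) \geq 8$.

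Next I would invoke the decomposition of the regular module $\mathfrak{u}(\mgo) \simeq P(V_0)^{\oplus n_0} \oplus P(V_1)^{\oplus n_1}$, where $n_i = \dim \End(V_i)$. Since $\ku$ is algebraically closed and $V_0, V_1$ are absolutely simple, $n_0 = \dim V_0 = 1$ and $n_1 = \dim V_1 = 3$. Hence
\begin{align*}
\dim P(V_0) + 3\dim P(V_1) = \dim \mathfrak{u}(\mgo) = 2^5 = 32.
\end{align*}
Combined with $\dim P(V_0) \geq 8$ and $\dim P(V_1) \geq 8$, the only possibility is $\dim P(V_0) = \dim P(V_1) = 8$. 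Therefore the injective maps $M \hookrightarrow I(V_0) = P(V_0)$ and $N \hookrightarrow I(V_1) = P(V_1)$ constructed above are isomorphisms by equality of dimensions, giving $P(V_0) \simeq I(V_0) \simeq M$ and $P(V_1) \simeq I(V_1) \simeq N$.

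The main obstacle I anticipate is not the counting itself but making sure the ingredients are cleanly in place: one must verify that $M$ and $N$ really are $\mathfrak{u}(\mgo)$-modules (i.e. the listed action matrices satisfy the relations \eqref{rel-u(L)}), which is the content of the "direct calculations" in Lemmas \ref{comp-series-M} and \ref{comp-series-N}; one must know $\soc N \simeq V_1$ so that the injective-envelope argument applies to $P(V_1)$ exactly as it did for $P(V_0)$; and one must be careful that self-injectivity is used to pass between $P(V_i)$ and $I(V_i)$. A minor point worth stating explicitly is why $n_1 = 3$ rather than needing the full structure of $\mathfrak{u}(\mgo)$ as an algebra — it is simply that in the Wedderburn decomposition of $\mathfrak{u}(\mgo)/\Jac$ the block at $V_1$ is a $3\times 3$ matrix algebra, so $P(V_1)$ appears with multiplicity $3$ in the regular representation. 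Once these are noted, the proof closes with the arithmetic above.
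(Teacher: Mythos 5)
Your proof is correct and follows essentially the same route as the paper: the decomposition $\mathfrak{u}(\mgo)\simeq P(V_0)\oplus P(V_1)^{3}$ gives $\dim P(V_0)+3\dim P(V_1)=32$, which together with the lower bounds $\dim P(V_i)\geq 8$ from Lemmas \ref{comp-series-M} and \ref{comp-series-N} forces both dimensions to equal $8$ and turns the embeddings of $M$ and $N$ into their injective envelopes into isomorphisms. One small slip: the multiplicity $n_i$ of $P(V_i)$ in the regular module is $\dim V_i$ (over an algebraically closed field), not $\dim\End(V_i)$, which by Schur's lemma equals $1$ for both simples; your later explanation via the $3\times 3$ Wedderburn block is the correct justification.
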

\begin{proof}
Since $\mathfrak{u}(\mgo)\simeq P(V_0)\oplus P(V_1)^3$ as $\mathfrak{u}(\mgo)$-modules, we see that 
\[32=\dim \mathfrak{u}(\mgo)=\dim P(V_0)+3\dim P(V_1).\] 
Then, by Lemmas \ref{comp-series-M} and \ref{comp-series-N} we have that $\dim P(V_i)=8$, $i \in \I_{0,1}$.
\end{proof}

Given  an algebra $A$, $\dim A < \infty$,  $M$ in $\lmod{A}$ is {\it uniserial} if it has a unique composition series;
for example $M$ is uniserial when $\soc M$ and $ M/ \soc M $ are simple.
Now $A$ is {\it biserial} if every indecomposable projective  $P$ in $\lmod{A}$
has  uniserial submodules $U$ and $T$ such that $U+T=\rad P$ and $U\cap T$ is either $0$ or simple. 
Biserial algebras have tame representation type  \cite{cb}. 

\begin{lemma} \label{biserial} The algebra $\mathfrak{u}(\mgo)$ is biserial and has tame representation type.
\end{lemma}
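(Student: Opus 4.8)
The plan is to verify the biseriality criterion directly on the two indecomposable projectives $P(V_0)\simeq M$ and $P(V_1)\simeq N$, which by Proposition \ref{proje-cover} are the only indecomposable projective $\mathfrak{u}(\mgo)$-modules (up to isomorphism), and then invoke \cite{cb} to conclude tameness. So the work reduces to exhibiting, for each of $\rad M$ and $\rad N$, two uniserial submodules whose sum is the radical and whose intersection is $0$ or simple.

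First I would treat $P(V_0)\simeq M$. By Lemma \ref{comp-series-M}(iii) we have $\rad M = M_2$, which is $5$-dimensional with basis $\{v_1,v_2,v_3,w_3,w_4\}$ (recall $M_2 = M_1\oplus \ku\{w_2,w_3,w_4\}$ but $w_2\notin \rad M$ — I should recheck this against the graph \eqref{graph-1}; in any case $\rad M$ is read off as the submodule generated by the images of the arrows). The candidate uniserial submodules are $U = M_1 = \ku\{v_i : i\in\I_4\}$ and $T = \ku\{v_3, w_3, w_4\}$ (the ``right tail'' together with $v_3$, which is where the arrow from $w_2$ lands). One checks from the action formulas that $U$ is uniserial with composition factors $V_0, V_1$ and that $T$ is uniserial with composition factors $V_0, V_1$ as well; that $U + T = \rad M$; and that $U\cap T = \ku\{v_3\}\oplus\ku\{v_4\}$ or $\ku\{v_4\}$, which is simple (isomorphic to $V_0$). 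The exact bookkeeping of which basis vectors lie in $\rad M$, $U$, $T$ is the part that needs care, but it is entirely mechanical given the explicit action.

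Next, symmetrically, for $P(V_1)\simeq N$: by Lemma \ref{comp-series-N}(iii), $\rad N = N_2 = \ku\{v_1,v_2,v_3,v_4,w_4\}$. I would take $U = N_1 = \ku\{v_1,v_2,v_3,v_4\}$ and $T = \ku\{v_3, v_4, w_4\}$ (the latter being the submodule capturing the arrow from $w_3$ into $v_4$ and from $w_4$), verify each is uniserial from the action formulas, check $U+T = \rad N$, and that $U\cap T$ is simple. Again the only subtlety is reading off the submodule structure correctly from the directed graph \eqref{graph} and the displayed action; the argument that each piece is uniserial uses that its socle and its head are simple, as noted in the sentence preceding the lemma.

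The main obstacle is purely organizational rather than conceptual: one must pin down precisely which subspaces are submodules and uniserial, and this depends on getting the action tables right. An alternative that sidesteps much of this: since the separated quiver of $\ext Q(\mathfrak{u}(\mgo))$ is a disjoint union of two copies of $B_2$ (finite type), Drozd's criterion already excludes wild type, so $\mathfrak{u}(\mgo)$ is either finite or tame; combined with the fact that $\dim P(V_i) = 8 > \dim V_i$ (so the algebra is not semisimple and — after a small further argument — not of finite type either, as the string/band analysis of Section \ref{sec:indec} will show infinitely many indecomposables), one gets tameness. But the biserial route is cleaner and self-contained, so I would present that, closing with the citation to \cite{cb} for ``biserial $\Rightarrow$ tame''.
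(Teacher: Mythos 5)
Your overall strategy coincides with the paper's: for each of the two indecomposable projectives $M\simeq P(V_0)$ and $N\simeq P(V_1)$ exhibit two uniserial submodules whose sum is the radical and whose intersection is zero or simple, then cite \cite{cb} for ``biserial $\Rightarrow$ tame''. However, the bookkeeping you defer as ``mechanical'' is exactly where the argument as written fails. For $M$: by Lemma \ref{comp-series-M}, $\rad M=M_2=M_1\oplus\ku\{w_2,w_3,w_4\}$ is $7$-dimensional, not $5$-dimensional, and it contains both $v_4$ and $w_2$. Your candidate $T=\ku\{v_3,w_3,w_4\}$ is not a submodule, since $bw_3=w_2\notin T$, and $U+T$ misses $w_2$ in any case; moreover $U\cap T=\ku\{v_3\}$ is not even $a$-stable. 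The correct second piece is $V=\ku\{v_4,w_2,w_3,w_4\}$, the submodule generated by $w_2$, which has $\soc V=\ku\{v_4\}\simeq V_0$ and top $\simeq V_1$ (hence is uniserial), satisfies $U+V=\rad M$ with $U=M_1$, and gives $U\cap V=\ku\{v_4\}\simeq V_0$.

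For $N$ the same problem recurs: $T=\ku\{v_3,v_4,w_4\}$ is not a submodule because $bv_3=v_2\notin T$. Enlarging to the actual submodule $V'=\ku\{v_i,w_4: i\in\I_{2,4}\}$ repairs this, but then $U'\cap V'=\ku\{v_2,v_3,v_4\}=N_0\simeq V_1$, which is simple of dimension $3$; the intersection $\ku\{v_3,v_4\}$ you predicted is $2$-dimensional and could never be simple, since the simple $\mathfrak{u}(\mgo)$-modules have dimensions $1$ and $3$. So with your choices the biseriality condition cannot be verified; with the corrected subspaces it holds, and uniseriality of each piece follows from the ``simple socle and simple top'' criterion you quote. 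Finally, your proposed shortcut via the separated quiver is not a proof either: that quiver only controls $\mathfrak{u}(\mgo)/\rad^2\mathfrak{u}(\mgo)$, and its representation type does not by itself rule out wildness of $\mathfrak{u}(\mgo)$. The biserial route is the right one, but the specific submodules must be the ones above.
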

\begin{proof}
By Lemma \ref{comp-series-M} (iii), $M_2 =\rad M =U+V$, where $U = M_1$ and $V=\ku\{v_4,w_i: i \in \I_{2,4}\}$. Now $U\cap V=\ku\{v_4\}\simeq V_0$,
hence  $U$ and $V$ are uniserial. Similarly,  $\rad N=N_2 =U'+V'$, where $U' = N_1$ and $V'=\ku\{v_i,w_4: i \in \I_{2,4}\}$, by Lemma \ref{comp-series-N} (iii). Now  $U'\cap V'=\ku\{v_i: i \in \I_{2,4}\}\simeq V_1$ and $U',V'$ are uniserial. 
Since $M \simeq P(V_0)$ and $N \simeq P(V_1)$ are the unique, up to isomorphism, indecomposable projective $\mathfrak{u}(\mgo)$-modules,
$\mathfrak{u}(\mgo)$ is biserial.
\end{proof}

\subsection{The associated bound quiver}\label{subsec:bound-quiver} 
We collect some facts on the representation theory of  a finite-dimensional algebra $A$.
Let $\{e_i: i \in \I_n\}$ be a complete set  of primitive orthogonal idempotents of $A$.
Consider the equivalence relation on $\I_n$ given by $i\sim j \iff Ae_i \simeq Ae_j$ and pick a full set $\J$ of representatives of $\sim$. 
The basic algebra $A^{\mathtt b}$ (associated to the previous choice) is 
\begin{align*}
A^{\mathtt b} & \coloneqq e_A A e_A , &&\text{where} & e_A & \coloneqq \sum_{j\in \J} e_j.
\end{align*}
Different choices of the primitive orthogonal idempotents or of the representatives give rise to different, 
but isomorphic, basic algebras, cf. \cite[I.6.5]{ass}.

\begin{lemma}\label{lema:equiv-categories} The $\ku$-linear functors
\begin{align} \label{functor-te}
\begin{aligned}
\Ind^l_{e_A} & \colon \lmod{A^{\mathtt b}} \to \lmod{A}, & &&
\Ind^l_{e_A}(N) &\coloneqq   Ae_{A} \otimes_{A^{\mathtt b}} N ,
\\ 
\res^l_{e_A} &\colon \lmod{A} \to \lmod{A^{\mathtt b}}, &&&
\res^l_{e_A}(M) &\coloneqq e_{A}M,
\end{aligned}
\end{align}
are equivalences of categories quasi-inverse to each other. 
Hence there are bijections between the sets of  isomorphism classes of  simple, respectively 
indecomposable, objects  in $\lmod{A}$ and  $\lmod{A^{\mathtt{b}}}$.
\end{lemma}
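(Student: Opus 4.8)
The statement is the standard Morita-type equivalence between a finite-dimensional algebra $A$ and its basic algebra $A^{\mathtt b} = e_A A e_A$, realized by the induction/restriction functors attached to the idempotent $e_A$. The plan is to verify that $e_A$ is a \emph{full idempotent}, i.e. $A e_A A = A$, and then invoke the general theory of idempotent (co)restriction. Concretely, I would proceed as follows. First, since $\{e_i : i \in \I_n\}$ is a complete set of primitive orthogonal idempotents, $A = \bigoplus_i A e_i$, and by the choice of $\J$ every indecomposable projective $A e_i$ is isomorphic to some $A e_j$ with $j \in \J$. Hence each $A e_i$ is a direct summand of a direct sum of copies of $A e_A = \bigoplus_{j \in \J} A e_j$, so $A$ (as a left module over itself) is a summand of a finite direct sum of copies of $A e_A$; equivalently $A e_A A = A$, so $e_A$ is full.

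\textbf{Key steps.} With $e_A$ full, the two functors in \eqref{functor-te} are quasi-inverse equivalences by the classical argument: $\res^l_{e_A} = e_A(-) \cong \Hom_A(A e_A, -)$ is exact (since $A e_A$ is projective) and $\Ind^l_{e_A} = A e_A \otimes_{A^{\mathtt b}} (-)$ is its left adjoint. For $N \in \lmod{A^{\mathtt b}}$ one has a natural isomorphism $\res^l_{e_A}\Ind^l_{e_A}(N) = e_A A e_A \otimes_{A^{\mathtt b}} N \cong A^{\mathtt b} \otimes_{A^{\mathtt b}} N \cong N$, giving one unit/counit. For $M \in \lmod{A}$ the counit $A e_A \otimes_{A^{\mathtt b}} e_A M \to M$ is an isomorphism because $A e_A A = A$: it holds for $M = A$ (where it is just the multiplication map $A e_A \otimes_{e_A A e_A} e_A A \cong A$, valid as $e_A$ is full), hence for every finitely generated projective $M$, and then for arbitrary $M \in \lmod{A}$ by taking a projective presentation and using right-exactness of both sides together with the five lemma. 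This establishes the equivalence. The statement about simple and indecomposable objects is then immediate: an equivalence of categories preserves and reflects simplicity, indecomposability and isomorphism classes, so $\res^l_{e_A}$ induces the asserted bijections between $\Irr A$ and $\Irr A^{\mathtt b}$, and between $\Indec A$ and $\Indec A^{\mathtt b}$.

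\textbf{Main obstacle.} There is no deep obstacle here; the only point requiring care is the fullness of $e_A$, which is exactly the role played by the equivalence relation $i \sim j \iff A e_i \simeq A e_j$ and the choice of a full set $\J$ of representatives: it guarantees that no isomorphism class of indecomposable projective is lost when passing from $e_A$ back to $1$. Once that is in hand, everything else is the standard formalism of recollement by an idempotent; alternatively one may simply cite \cite[I.6]{ass} or \cite[Proposition 6.2(b), II.2]{ass}, where exactly this equivalence $\lmod{A} \simeq \lmod{A^{\mathtt b}}$ is proved, and deduce the bijection of simples and indecomposables from the general fact that categorical equivalences preserve these notions. I would present the short fullness argument explicitly and then refer to \cite{ass} for the remainder.
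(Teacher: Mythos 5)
Your proposal is correct and follows the same underlying route as the paper, which simply cites \cite[I.6.10]{ass} for the right-module version of this equivalence and notes that the left-module version follows by the same arguments; your explicit verification (fullness of $e_A$, the adjunction, and the unit/counit isomorphisms) is exactly the standard argument behind that citation. The final deduction about simples and indecomposables matches the paper as well.
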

\begin{proof}
It was showed in \cite[I.6.10]{ass} that the functors 
\begin{align*} 
\begin{aligned}
\Ind^r_{e_A} & \colon \rmod{A^{\mathtt b}} \to \rmod{A}, & &&
\Ind^r_{e_A}(N) &\coloneqq   N\otimes_{A^{\mathtt b}}e_AA,
\\ 
\res^r_{e_A} &\colon \rmod{A} \to \rmod{A^{\mathtt b}}, &&&
\res^r_{e_A}(M) &\coloneqq Me_{A},
\end{aligned}
\end{align*}
determine an equivalence of categories. Using the same arguments we obtain the left version and the result follows.
\end{proof}
Recall that a bound quiver is a pair $(Q, I)$ where  $Q=(Q_0,Q_1,s,t)$ is a  quiver and 
$I$ is an admissible ideal in the path algebra  $\ku Q$; here \emph{admisible} means that there is $m \geq 2$ such that 
$R_Q^m \subseteq I \subseteq R_Q^2$, where  $R_Q^m$ is the two-sided of $\ku Q$ generated by the paths of length $m$.
The associated bound quiver algebra is $\ku Q/I$ which is basic \cite[II.2.10]{ass}.
Let $\rep(Q,I)$ be the category of  finite-dimensional (right) representations of $Q$ bound by $I$ as in \cite[III.1.4]{ass}.
By \cite[III.1.6]{ass}, there is an equivalence of categories $\rep(Q,I) \to  \rmod{\ku Q/I}$.

\medbreak
Given a finite-dimensional basic algebra $B$, fix a complete set  of pri\-mi\-tive orthogonal idempotents $\{e_i: i \in \I_n\}$. 
Then the bound quiver $Q_B$ has set of points $\I_n$ and the arrows from $i$ to $j$  are indexed by a basis of
$e_i\left(\Jac B/\Jac^2 B\right) e_j$; this quiver  is isomorphic to  the Gabriel quiver of $B$ by \cite[III.1.14]{ars}
but we will need later the former presentation. Now $B$ is isomorphic to 
the bound quiver algebra  $\ku Q_B/I$ where $I$ is an adequate admissible ideal; see \cite[II.3.7]{ass}.

\begin{definition}\label{def:special-biserial} \cite{skwa}
Let  $(Q, I)$ be a bound quiver.
The algebra $A = \ku Q/I$ is \emph{special biserial} provided that  for any vertex  $i\in Q_0$
\begin{align*}
|\{\alpha\in Q_1\,:\,s(\alpha)=i\}| &\leq 2, &  |\{\alpha\in Q_1\,:\,t(\alpha)=i\}| &\leq 2 ;
\end{align*}
also, given $\alpha, \beta, \gamma\in Q_1$, $\alpha\neq \beta$ one requires that
\begin{align*}
\text{if } s(\alpha)=s(\beta)=t(\gamma), \text{ then }  \gamma \alpha\in I \text{ or }  \gamma\beta \in I,
\\
\text{if } t(\alpha)= t(\beta)=s(\gamma), \text{ then }  \alpha\gamma \in I \text{ or } \beta \gamma \in I.
\end{align*}
Initially such  algebras  were named special in \cite{skwa}, where it was shown that they are biserial.
The name `special biserial' appears already in \cite{HL}.
\end{definition}

\subsection{The algebra \texorpdfstring{$\mathfrak{u}(\mgo)^{\mathtt b}$}{} is special biserial}\label{sub:special-bis}
Since $\mathfrak{u}(\mgo)\simeq \mathfrak{u}(\mgo)^{\text{opp}}$ by the antipode, there is an equivalence of categories
$\lmod{\mathfrak{u}(\mgo)} \simeq \rmod{\mathfrak{u}(\mgo)}$. However we shall not use the antipode; instead,
we calculate in this Subsection one basic algebra 
$\mathfrak{u}(\mgo)^{\mathtt b}$, the bound quiver $(Q, I)$ associated to $\mathfrak{u}(\mgo)^{\mathtt b}$ and
an algebra isomorphism 
$\psi: \mathfrak{u}(\mgo)^{\mathtt b} \to \left(\mathfrak{u}(\mgo)^{\mathtt b}\right)^{\operatorname{opp}}$. 
We also show that $\mathfrak{u}(\mgo)^{\mathtt b}$
is  special   biserial; this will allow us
to compute  $\Indec\mathfrak{u}(\mgo)$ in Section \ref{sec:indec}.

\medbreak
We start by computing primitive  idempotents $e_0$ and $e_1$ in $\mathfrak{u}(\mgo)$ 
generating submodules isomorphic to the projective covers $P(V_0)$ and $P(V_1)$.

\begin{lemma}\label{idempotents}
The elements $e_0=(1+ab+a^2b^2)(1+c)$ and $e_1=(1+a^2b^2)c$ are idempotents in $\mathfrak{u}(\mgo)$ 
and satisfy $P(V_0)\simeq \mathfrak{u}(\mgo)e_0$,
$P(V_1)\simeq \mathfrak{u}(\mgo)e_1$. 
\end{lemma}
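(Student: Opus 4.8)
The plan is to verify directly that $e_0$ and $e_1$ are idempotents using the relations of Lemma \ref{relbasic}, and then to identify the modules $\mathfrak{u}(\mgo)e_0$ and $\mathfrak{u}(\mgo)e_1$ by exhibiting explicit bases and comparing with the modules $M$ and $N$ of Lemmas \ref{comp-series-M} and \ref{comp-series-N}. First I would record the basic facts I will use: in $\mathfrak{u}(\mgo)$ we have $c^2=c$, so $1+c$ and $c$ are the two orthogonal central-looking idempotents of $\ku[c]$; however $c$ is \emph{not} central, so one has to be careful and compute with the commutation rules $ca=ac+a$, $cb=bc+b$, $ca^2=a^2c$, $cb^2=b^2c$, $ca^3=a^3\overline c$, $cb^3=b^3\overline c$ from Lemma \ref{relbasic}. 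I would also use $a^4=b^4=0$ and the mixed relations $ba^2=a^2b+a$, $b^2a^2=a^2b^2+c$, etc.

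For the idempotency of $e_1=(1+a^2b^2)c$: since $a^2$ commutes with $c$ and $b^2$ commutes with $c$, the element $a^2b^2$ commutes with $c$, so $e_1=c(1+a^2b^2)=(1+a^2b^2)c$ and $e_1^2=(1+a^2b^2)^2 c^2=(1+a^2b^2)^2 c$. Then $(1+a^2b^2)^2=1+a^2b^2a^2b^2$ (characteristic $2$), and $a^2b^2a^2b^2 = a^2(a^2b^2+c)b^2 = a^4b^4 + a^2cb^2 = 0 + a^2b^2c$ using $b^2a^2=a^2b^2+c$ and then $cb^2=b^2c$; multiplying by $c$ on the right and using $c^2=c$ gives $e_1^2 = c + a^2b^2c^2 = c+a^2b^2c = e_1$, as wanted. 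For $e_0=(1+ab+a^2b^2)(1+c)$ I would first check $e_0$ is idempotent by a similar but longer reduction: expand $(1+ab+a^2b^2)(1+c)(1+ab+a^2b^2)(1+c)$, move the inner $(1+c)$ to the right using the commutation rules (which will produce lower-order correction terms), collapse $(1+c)^2=1+c$, and reduce $(1+ab+a^2b^2)^2$ together with the cross terms modulo $a^4=b^4=0$; the relations $ba=ab+c$, $ba^2=a^2b+a$, $b^2a^2=a^2b^2+c$ are exactly what is needed to make everything cancel. I expect this bookkeeping to be the most error-prone part, but it is mechanical.

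To identify $\mathfrak{u}(\mgo)e_0$ with $P(V_0)$: by Proposition \ref{proje-cover} we already know $P(V_0)\simeq M$ with $\dim M=8$, and $\mathfrak{u}(\mgo)=P(V_0)\oplus P(V_1)^{\oplus 3}$, so it suffices to show that the left ideal $\mathfrak{u}(\mgo)e_0$ is \emph{indecomposable} with top $V_0$ and has dimension $8$. For the top: compute $e_0 \bmod \Jac\mathfrak{u}(\mgo)$; since $\Jac\mathfrak{u}(\mgo)$ is the augmentation ideal intersected appropriately and contains $a,b$ (as $a^4=b^4=0$ force $a,b$ nilpotent) while $c^2=c$, one sees $\mathfrak{u}(\mgo)/\Jac \simeq \ku\times\ku$ with $c\mapsto(0,1)$; then $e_0\mapsto (1,0)$ and $e_1\mapsto(0,1)$, identifying which simple sits on top of each. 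Concretely I would instead directly build the isomorphism $M\xrightarrow{\sim}\mathfrak{u}(\mgo)e_0$: send the cyclic generator of $M$ (the socle-cogenerating element, i.e. a vector generating $M$ as a module — by the graph \eqref{graph-1}, $w_1$ generates $M$) to $e_0$, and check this is well-defined and bijective by matching the PBW-type basis $\{a^ib^je_0\}$ against the basis $\{v_i,w_i\}$ of $M$; one verifies the $8$ listed actions of $a,b,c$ on $M$ are reproduced by left multiplication on $\mathfrak{u}(\mgo)e_0$ using Lemma \ref{relbasic}. The same strategy with $e_1$ and the module $N$ (whose generator, from graph \eqref{graph}, is $w_1$ again) gives $P(V_1)\simeq N \simeq \mathfrak{u}(\mgo)e_1$, and a dimension count ($8+3\cdot 8=32$) confirms nothing was lost. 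The main obstacle is purely computational: keeping track of the lower-order terms produced by the non-central $c$ when reducing products; the relations in Lemma \ref{relbasic} have been assembled precisely to control this, so the argument is a finite check.
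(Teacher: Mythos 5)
Your proposal is correct and follows essentially the same route as the paper: idempotency is checked directly from the relations of Lemma \ref{relbasic} (your computation for $e_1$ is exactly right, and the analogous expansion for $e_0$ does close up using $ba=ab+c$, $ba^2=a^2b+a$, $b^2a^2=a^2b^2+c$ and $c^2=c$), and then $\mathfrak{u}(\mgo)e_0\simeq M\simeq P(V_0)$ and $\mathfrak{u}(\mgo)e_1\simeq N\simeq P(V_1)$ are obtained by sending the generator $w_1$ to $e_i$ and matching the actions, which is precisely what the paper does. One correction to your aside: since $V_1$ is three-dimensional and $\mathfrak{u}(\mgo)$ is not basic, $\mathfrak{u}(\mgo)/\Jac\mathfrak{u}(\mgo)\simeq \ku\times M_3(\ku)$ rather than $\ku\times\ku$, and $\bar e_1$ is a rank-one idempotent of the $M_3(\ku)$ factor (not its identity); the conclusion you want still holds, because a direct check on the matrices \eqref{mod-simple-3} shows that $e_0$ acts by $1$ on $V_0$ and by $0$ on $V_1$, while $e_1$ acts by $0$ on $V_0$ and by a rank-one idempotent on $V_1$.
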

\pf
From Lemma \ref{relbasic} we infer that  $e_0$ and $e_1$ are idempotents and satisfy
\begin{align*}
& ab^3e_0=b^2 e_0, & & a^2b^3e_0=be_0, & & a^3b^3e_0= (ab+c)e_0,\\
& ba^3e_0=a^2e_0, & & b^2a^3e_0=ae_0, & & b^3a^3 e_0=(ab+c)e_0.
\end{align*}
Then,  $P(V_0)\simeq \mathfrak{u}(\mgo)e_0$  by the assignment $v_1 \mapsto b^3e_0$ and $w_1 \mapsto e_0$. 
Moreover, 
\begin{align*}
& ba^3e_1=a^3be_1, & & b^2a^3e_1= a^2be_1, & & b^3a^3e_1=abe_1,\\
& ba^2e_1=bae_1+ba^2be_1, & & bae_1=b e_1+babe_1.
\end{align*}
Then,  $P(V_1)\simeq \mathfrak{u}(\mgo)e_1$  by the assignment 
$v_1 \mapsto be_1$ and $w_1 \mapsto e_1$.
\epf 

Let $e=e_0+e_1$. It follows from  Lemma \ref{relbasic} that
the following  relations hold in $\mathfrak{u}(\mgo)$:
\begin{align*}
& e_0a=ae_1+ac, & & e_0a^2=a^2e_0+a^2(1+c), & & e_0a^3=a^3e_1,\\
& e_1a=ae_0, & & e_1a^2=a^2e_1+a^2c, & & e_1a^3=a^3e_0+a^3(1+c),\\
& e_0b=be_1, & & e_0b^2=b^2e_0+b^2(1+c), & & e_0b^3=b^3c,\\
&e_1b=be_0+b(1+c), & & e_1b^2=b^2c, & & e_1b^3=b^3e_0, \\
&ea=ae+ac, & & ea^2=a^2e+a^2, & & ea^3=a^3e+a^3(1+c),\\
&eb=be+b(1+c), & & eb^2=b^2e+b^2, & & eb^3=b^3e+b^3c,\\
&&& ec=ce=e_1.
\end{align*}

Now, set 
\[\mathfrak{u}(\mgo)^{\mathtt b}= e \mathfrak{u}(\mgo) e.\]
We see from the previous relations that
the  basic algebra $\mathfrak{u}(\mgo)^{\mathtt b}$ has a basis 
\begin{align}\label{eq:base-um-basica}
\{e_0, e_1, ae_0, a^3e_1, b^3e_0, be_1, a^3b^3e_0, abe_1\}.
\end{align}
The simple (left) $\mathfrak{u}(\mgo)^{\mathtt b}$-modules are $eV_0 =V_0$ and $eV_1 = \ku \{v_1\}$, see Subsection \ref{sub:irred-rep-u(g)}. Using this,
we observe that the Jacobson radical of $\mathfrak{u}(\mgo)^{\mathtt b}$ is
\begin{align*}
\Jac \mathfrak{u}(\mgo)^{\mathtt b} &= \langle ae_0, a^3e_1, b^3e_0, be_1, a^3b^3e_0, abe_1 \rangle.
\end{align*}
Hence $\Jac^2 \mathfrak{u}(\mgo)^{\mathtt b}  = \langle a^3b^3e_0, abe_1\rangle$ and
\begin{align*}
e_i \left(\Jac \mathfrak{u}(\mgo)^{\mathtt b}/\Jac^2 \mathfrak{u}(\mgo)^{\mathtt b}\right) e_i&=0,&  i &\in \I_{0,1},
\\  e_0 \left(\Jac \mathfrak{u}(\mgo)^{\mathtt b}/\Jac^2 \mathfrak{u}(\mgo)^{\mathtt b}\right) e_1 
&= \ku \{a^3e_1, be_1\}, 
\\  e_1 \left(\Jac \mathfrak{u}(\mgo)^{\mathtt b}/\Jac^2 \mathfrak{u}(\mgo)^{\mathtt b}\right) e_0&=
\ku \{ae_0, b^3e_0\}.
\end{align*}
Therefore the ordinary quiver of $\mathfrak{u}(\mgo)^{\mathtt b}$, denoted by $Q\coloneqq Q_{\mathfrak{u}(\mgo)^{\mathtt b}}$, is 
\begin{align}\label{quiver}  \xymatrix{0\ar@/^1pc/[rr]^{\alpha_2} \ar@/^2pc/[rr]^{\alpha_1} && 1 . \ar@/^1pc/[ll]^{\beta_1} \ar@/^2pc/[ll]^{\beta_2} } \end{align} 
As we said, this coincides with the Gabriel quiver \eqref{eq:quiver}, see \cite[III.1.14]{ars}.

\begin{theorem}\label{teo-iso-quiver} \begin{enumerate*} [leftmargin=*,label=\rm{(\roman*)}]
\item\label{item:bdquiv} Let $Q$ be the quiver \eqref{quiver}. Then, $\mathfrak{u}(\mgo)^{\mathtt b} \simeq \ku Q/I$, where 
\end{enumerate*} 
\begin{align}\label{eq:bound}
I=\langle \alpha_1\beta_1, \alpha_2\beta_2, \beta_1\alpha_1, \beta_2 \alpha_2, \alpha_1\beta_2+\alpha_2\beta_1, \beta_1\alpha_2+\beta_2\alpha_1 \rangle
\end{align} 
is the kernel of the algebra epimorphism  $\varphi: \ku Q \to \mathfrak{u}(\mgo)^{\mathtt b}$ defined by
\begin{align}\label{morphism-fi}
\varphi(\alpha_1) &= a^3e_1,& \varphi(\alpha_2) &= be_1, &\varphi(\beta_1) &=ae_0,& \varphi(\beta_2) &= b^3e_0,
\end{align}
and $\varphi(\varepsilon_i) = e_i$. Here $\varepsilon_i$ is the trivial path of length $0$ associated to $i\in \I_{0,1}$.

\begin{enumerate} [leftmargin=*,label=\rm{(\roman*)}]\setcounter{enumi}{1}
\item\label{item:specbiserial}
The algebra $\mathfrak{u}(\mgo)^{\mathtt b}$ is special biserial.
\end{enumerate} 
\end{theorem}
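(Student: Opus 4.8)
The plan is to prove the two parts of Theorem \ref{teo-iso-quiver} in sequence, first establishing the bound-quiver presentation and then deducing special biseriality from it by inspection of the quiver and the relations.

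\medbreak
\textbf{Part \ref{item:bdquiv}.} First I would check that $\varphi$ is well defined as an algebra map from the path algebra $\ku Q$: since $\ku Q$ is free on the arrows modulo the path composition rules, it suffices to verify that the images $\varphi(\alpha_i), \varphi(\beta_i), \varphi(\varepsilon_i)$ respect the source/target constraints, i.e.\ that $e_1\cdot \varphi(\alpha_i) \cdot e_0$, $e_0 \cdot \varphi(\beta_i)\cdot e_1$, and the idempotent relations $e_ie_j = \delta_{ij}e_i$ hold; all of this follows from the relations $ec = ce = e_1$, $e_0a = ae_1 + ac$, etc.\ displayed just before the statement, together with Lemma \ref{relbasic}. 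Next, surjectivity of $\varphi$ is immediate: the eight listed generators of the basis \eqref{eq:base-um-basica} of $\mathfrak{u}(\mgo)^{\mathtt b}$ are exactly $e_0, e_1$, the four arrow-images, and $\varphi(\alpha_1\beta_1')$-type products — more precisely $a^3b^3e_0 = \varphi(\beta_2\alpha_1)$ or a suitable combination, and $abe_1 = \varphi(\alpha_2\beta_2)$ up to sign, which I would pin down using $e_0b^3 = b^3c$, $e_1a^3 = a^3e_0 + a^3(1+c)$ and the commutation relations of Lemma \ref{relbasic}. Then I would show $I \subseteq \ker\varphi$ by direct substitution: each of the six generators of $I$ maps to a product of the form $a^i b^j e_k$ which vanishes because $a^4 = b^4 = 0$ (for $\alpha_1\beta_1, \beta_1\alpha_1$, involving $a^3\cdot a = a^4$, etc.) or because the two order-three paths coincide in $\mathfrak{u}(\mgo)^{\mathtt b}$ (for $\alpha_1\beta_2 + \alpha_2\beta_1$ and $\beta_1\alpha_2 + \beta_2\alpha_1$, where the characteristic-$2$ sign is what makes a sum rather than a difference appear). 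Finally, to get $I = \ker\varphi$ exactly, I would count dimensions: $\ku Q/I$ has a basis given by $\varepsilon_0, \varepsilon_1$, the four arrows, and two length-two paths surviving modulo $I$ (say $\beta_2\alpha_1$ at vertex $0$ and $\alpha_2\beta_2$ at vertex $1$, with all length-$\geq 3$ paths in $I$), so $\dim \ku Q/I = 8 = \dim \mathfrak{u}(\mgo)^{\mathtt b}$; since $\varphi$ is a surjection between spaces of equal finite dimension that kills $I$, it factors as an isomorphism $\ku Q/I \xrightarrow{\sim} \mathfrak{u}(\mgo)^{\mathtt b}$. One also checks $I$ is admissible: $R_Q^3 \subseteq I$ (every length-three path is killed) and $I \subseteq R_Q^2$ (every generator has length $\geq 2$).

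\medbreak
\textbf{Part \ref{item:specbiserial}.} With the presentation $\mathfrak{u}(\mgo)^{\mathtt b} \simeq \ku Q/I$ in hand, special biseriality is a finite check against Definition \ref{def:special-biserial}. At each of the two vertices there are exactly two arrows starting and two arrows ending: at vertex $0$, arrows out are $\beta_1, \beta_2$ and arrows in are $\alpha_1, \alpha_2$; at vertex $1$, arrows out are $\alpha_1, \alpha_2$ and arrows in are $\beta_1, \beta_2$. So the valence condition holds. For the composability condition, take a pair of distinct arrows $\alpha \neq \beta$ with common source equal to the target of a third arrow $\gamma$: e.g.\ $s(\beta_1) = s(\beta_2) = 0 = t(\alpha_1) = t(\alpha_2)$, so for $\gamma \in \{\alpha_1, \alpha_2\}$ we must check that one of $\gamma\beta_1$, $\gamma\beta_2$ lies in $I$ — indeed $\alpha_1\beta_1 \in I$ and $\alpha_2\beta_2 \in I$, so for $\gamma = \alpha_1$ the path $\gamma\beta_1 \in I$, and for $\gamma = \alpha_2$ the path $\gamma\beta_2 \in I$. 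The symmetric condition with common target is handled identically using $\beta_1\alpha_1, \beta_2\alpha_2 \in I$. This exhausts the cases, so $\mathfrak{u}(\mgo)^{\mathtt b}$ is special biserial.

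\medbreak
I expect the only real work to be in Part \ref{item:bdquiv}, specifically identifying the two surviving length-two paths in $\ku Q/I$ with the basis elements $a^3b^3e_0$ and $abe_1$ of $\mathfrak{u}(\mgo)^{\mathtt b}$ and confirming that all length-three paths do map to $0$; this is where Lemma \ref{relbasic} and the list of $e$-commutation relations get used most heavily, and where the characteristic-$2$ bookkeeping (e.g.\ $\overline{c} = 1 + c$ interacting with $e_0, e_1$) must be tracked carefully. Once the dimension count closes, everything else is formal. Part \ref{item:specbiserial} is purely combinatorial and poses no obstacle.
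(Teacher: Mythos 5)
Your proposal is correct and follows essentially the same route as the paper, which simply invokes the standard argument of \cite[II.3.7]{ass}: check that $\varphi$ is a well-defined surjective algebra map, that the six listed relations lie in $\ker\varphi$, and close the gap with the dimension count $\dim \ku Q/I = 8 = \dim\mathfrak{u}(\mgo)^{\mathtt b}$; part \ref{item:specbiserial} is then the finite combinatorial check you describe. One caveat before writing it up: your identifications of source/target and of the surviving length-two paths are off. In the quiver \eqref{quiver} the arrows $\alpha_1,\alpha_2$ go from $0$ to $1$ and $\beta_1,\beta_2$ from $1$ to $0$, and with the composition convention implicit in \eqref{eq:bound} one has $\varphi(\alpha_1\beta_2)=a^3e_1\cdot b^3e_0=a^3b^3e_0$ (the surviving class at vertex $0$, equal to $\varphi(\alpha_2\beta_1)=bae_0=abe_0$ since $ce_0=0$) and $\varphi(\beta_1\alpha_2)=ae_0\cdot be_1=abe_1=\varphi(\beta_2\alpha_1)=b^3a^3e_1$ (the surviving class at vertex $1$); in particular $\alpha_2\beta_2$ is a generator of $I$ and maps to $b^4e_0=0$, so it is not a surviving path and does not equal $abe_1$ up to sign. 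These slips do not affect the structure of your argument: exactly one length-two class survives at each vertex, every length-three path is killed (e.g.\ $\alpha_1\beta_2\alpha_1\equiv\alpha_1\beta_1\alpha_2=0$ modulo $I$), and $2+4+2=8$ closes the dimension count as you intended.
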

\begin{proof}
Arguing as in the proof of \cite[II.3.7]{ass}, we see that \eqref{morphism-fi} determines a surjective algebra map $\varphi: \ku Q\to \mathfrak{u}(\mgo)^{\mathtt b}$. In order to prove that $\varphi$ is injective, we denote  $\overline{u}:=u+I\in \ku Q/I$. Observe that $\{\overline{e}_0,\overline{e}_1,\overline{\alpha}_1, \overline{\alpha}_2,\overline{\beta}_1,\overline{\beta}_2, \overline{\alpha_1\beta}_2,\overline{\beta_1\alpha}_2\}$ is a basis of $\ku Q/I$ and hence $\dim \big(\ku Q/I\big)=8$. Since $I\subset \ker \varphi$ and $\dim \big(\ku Q/\ker \varphi\big)=\dim \mathfrak{u}(\mgo)^{\mathtt b}=8$, it follows that $I=\ker \varphi$.
This gives \ref{item:bdquiv} and  then  \ref{item:specbiserial} follows at once by the Definition \ref{def:special-biserial}.
\end{proof}

Finally, let $\psi:\mathfrak{u}(\mgo)^{\mathtt b}\to \mathfrak{u}(\mgo)^{\mathtt b}$  be given by
\begin{align*}
&\psi(e_0)=e_0,& &\psi(e_1)=e_1,&    &\psi(abe_1)=abe_1,& &\psi(a^3b^3e_0)=a^3b^3e_0,\\
&\psi(ae_0)=a^3e_1,& &\psi(a^3e_1)=ae_0,& &\psi(b^3e_0)=be_1,& &\psi(be_1)=b^3e_0.
\end{align*}
A direct verification shows that this is an anti-isomorphism of algebras.

\section{Indecomposable \texorpdfstring{$\mathfrak{u}(\mgo)$}{}-modules} \label{sec:indec}

\subsection{The classification}
Let $(Q, I)$ be a bound quiver such that $A \coloneqq \ku Q/I$ is special biserial. 
The   indecomposable objects in $\rmod{A}$ were determined in \cite[Proposition 2.3]{WW}; we 
follow the approach in \cite[\S 1]{HL}. 
Namely, $\Indec A$ consists of either simple, or projective-injective indecomposable, 
or  string (see Subsection \ref{sub-string}), or  band modules
(see Subsection \ref{sub-band}). Our main result in this Section is:

\begin{theorem}\label{thm:clasif-indec-um} The set $\Indec\mathfrak{u}(\mgo)$ consists of
\begin{enumerate}[leftmargin=*,label=\rm{(\roman*)}]
\item the simple modules $V_i$ and their projective covers $P(V_i)$, $i\in \I_{0,1}$,

\medbreak
\item\label{item:clasif-indec-um-string} the string modules 
\begin{align}\label{eq:clasif-indec-um-string}
&\Utt_{i,r},&  i \in \I_4, &\  r\geq1;
& &\Vtt_{j,t}, & &\Wtt_{j,t}, & j \in \I_{2}, &\  t \geq 0.
\end{align}
with action described in \eqref{eq:string-action-basis} and Table \ref{table:string-action};

\medbreak
\item\label{item:clasif-indec-um-band} the band modules 
$\Att_{\lambda,n}$,  $\Btt_{\lambda,n}$,  $n\geq 1$, $\lambda \in \ku^{\times}$,
with action described in \eqref{eq:band-action-basis} and Table \ref{table:band-action}.
\end{enumerate}
\end{theorem}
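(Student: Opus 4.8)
The plan is to leverage the structural result from Theorem \ref{teo-iso-quiver}: since $\mathfrak{u}(\mgo)^{\mathtt b} \simeq \ku Q/I$ is special biserial with the explicit bound quiver $(Q,I)$ given in \eqref{quiver}--\eqref{eq:bound}, the classification of indecomposable modules reduces to the combinatorial classification of strings and bands for $(Q,I)$, via the equivalences $\lmod{\mathfrak{u}(\mgo)} \simeq \rmod{\mathfrak{u}(\mgo)}$ (antipode) and $\rmod{\mathfrak{u}(\mgo)} \simeq \rmod{\mathfrak{u}(\mgo)^{\mathtt b}}$ (Lemma \ref{lema:equiv-categories}), together with $\rep(Q,I) \simeq \rmod{\ku Q/I}$. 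By \cite[Proposition 2.3]{WW} (or \cite[\S1]{HL}), $\Indec A$ for $A$ special biserial consists exactly of the simple modules, the indecomposable projective-injectives which are \emph{not} simple, the string modules, and the band modules. So the proof is essentially bookkeeping: enumerate all strings and all bands for our specific $(Q,I)$, and match them to the list in the statement.

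First I would recall the walk/string formalism: a \emph{string} is a reduced walk $w = c_1^{\pm 1} c_2^{\pm 1} \cdots c_\ell^{\pm 1}$ in $Q$ avoiding the relations in $I$ and their inverses, avoiding $c\,c^{-1}$ and $c^{-1} c$, and not itself a band nor a power of one; a \emph{band} is a cyclic string not a proper power, such that every rotation is a string but it is not a rotation of $w' w''$ with $w'$, $w''$ strings. Here $Q_1 = \{\alpha_1,\alpha_2,\beta_1,\beta_2\}$ with $\alpha_i\colon 0 \to 1$, $\beta_i\colon 1 \to 0$, and $I = \langle \alpha_1\beta_1,\alpha_2\beta_2,\beta_1\alpha_1,\beta_2\alpha_2, \alpha_1\beta_2 + \alpha_2\beta_1, \beta_1\alpha_2 + \beta_2\alpha_1\rangle$. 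The key point is which length-two paths are forbidden: the monomial relations kill $\alpha_1\beta_1$, $\alpha_2\beta_2$, $\beta_1\alpha_1$, $\beta_2\alpha_2$; the binomial relations mean that in any string, a subpath $\alpha_1\beta_2$ forces the module to also see $\alpha_2\beta_1$, but for the \emph{string} combinatorics one works modulo choosing, at each special-biserial branching, one of the two arrows — so effectively the allowed length-two direct paths are $\alpha_1\beta_2$, $\alpha_2\beta_1$, $\beta_1\alpha_2$, $\beta_2\alpha_1$. From this one reads off that any directed path alternates $\alpha$'s and $\beta$'s, and the subscript is forced to \emph{flip} at each step (e.g. $\cdots\alpha_1\beta_2\alpha_1\beta_2\cdots$ or $\cdots\beta_1\alpha_2\beta_1\alpha_2\cdots$). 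I would then systematically list the finitely-many "shapes" of reduced walks — those that are directed (yielding the families $\Utt_{i,r}$, $i\in\I_4$, indexed by starting vertex, starting arrow among $\alpha_1,\alpha_2,\beta_1,\beta_2$ up to the flipping constraint, and length $r$), those that change direction once (yielding $\Vtt_{j,t}$ and $\Wtt_{j,t}$, $j\in\I_2$, the two sub-cases coming from whether the "peak"/"valley" sits at vertex $0$ or $1$, with $t\geq 0$ counting the length), and check no walk changes direction twice without repeating a forbidden pattern (this is where the monomial relations $\alpha_i\beta_i$, $\beta_i\alpha_i$ do the work). For bands: a band must be a directed cyclic walk, hence of the form $(\alpha_1\beta_2)^n$, $(\alpha_2\beta_1)^n$, $(\beta_1\alpha_2)^n$, $(\beta_2\alpha_1)^n$ up to rotation and inversion — these collapse to two cyclic-word classes, giving the two families $\Att_{\lambda,n}$, $\Btt_{\lambda,n}$ with $\lambda\in\ku^\times$ the eigenvalue parameter and $n\geq 1$ the Jordan-block size $\jordan_n(\lambda)$.

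Then I would match the combinatorial modules with the explicit modules named in the statement: define each $\Utt_{i,r}$, $\Vtt_{j,t}$, $\Wtt_{j,t}$, $\Att_{\lambda,n}$, $\Btt_{\lambda,n}$ by its basis and the $a,b,c$-action (this is the content of \eqref{eq:string-action-basis}, Table \ref{table:string-action}, \eqref{eq:band-action-basis}, Table \ref{table:band-action}, which I would construct by pushing the representation-of-$(Q,I)$ data through $\varphi$ in \eqref{morphism-fi} and then through $\Ind^l_{e_A}$), verify each is indeed a string/band module for $(Q,I)$ hence indecomposable, and conversely that every string and band module arises this way — including checking the degenerate boundary cases ($r=1$, $t=0$) don't accidentally coincide with a simple or a projective. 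Finally I would note that the non-simple indecomposable projective-injectives $P(V_0) \simeq M$, $P(V_1) \simeq N$ from Proposition \ref{proje-cover} account for the remaining items in the special-biserial list, giving (i). The main obstacle is not any single hard idea but the care needed in the string enumeration: one must be scrupulous that the \emph{binomial} relations (not just the monomial ones) are correctly accounted for — i.e. that the "flip the subscript" rule is exactly forced, and that no extra strings sneak in from mixing $\alpha_1\beta_2$ with $\alpha_2\beta_2$-type turns — and that the indexing ranges ($i\in\I_4$ vs. $j\in\I_2$, $r\geq 1$ vs. $t\geq 0$) exactly capture each walk once, with no double-counting under the symmetry $w \mapsto w^{-1}$ and no omissions at the short-walk boundary.
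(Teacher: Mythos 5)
Your overall strategy is the paper's: pass to the basic algebra, invoke the Wald--Wasch\-b\"usch classification for special biserial algebras (simples, non-simple projective-injectives, string modules, band modules), enumerate the strings and bands of $(Q,I)$, and transport them back through the equivalences. However, there is a concrete error in your combinatorics that would derail the enumeration. You assert that ``the allowed length-two direct paths are $\alpha_1\beta_2$, $\alpha_2\beta_1$, $\beta_1\alpha_2$, $\beta_2\alpha_1$,'' i.e.\ that only the monomial relations $\alpha_i\beta_i$, $\beta_i\alpha_i$ forbid subpaths and that the binomial relations merely force a ``choice of branch.'' This is backwards. In the string combinatorics for a special biserial algebra one works over the associated string algebra, where the two maximal subpaths of each binomial relation are \emph{both} set to zero; in the paper's terminology, a string may contain no path that is a maximal subpath of a binomial relation. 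Since $\alpha_1\beta_2$ and $\alpha_2\beta_1$ are the maximal subpaths of $(\alpha_1\beta_2,\alpha_2\beta_1)$ (and similarly for $\beta_1\alpha_2$, $\beta_2\alpha_1$), \emph{every} directed path of length two in $Q$ is forbidden. Hence strings must strictly alternate arrows with inverse arrows: the correct families are $(\alpha_1\alpha_2^{-1})^r$, $(\alpha_1^{-1}\alpha_2)^r$, $(\beta_1\beta_2^{-1})^r$, $(\beta_1^{-1}\beta_2)^r$ and their one-step extensions, and the only bands are $\alpha_1\alpha_2^{-1}$ and $\beta_1\beta_2^{-1}$ --- not the directed cycles $(\alpha_1\beta_2)^n$ you propose. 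The paper even records exactly this point as an example: $\alpha_1\beta_2\alpha_1\alpha_2^{-1}$ is \emph{not} a string precisely because it contains $\alpha_1\beta_2$, while $\alpha_1\alpha_2^{-1}\alpha_1$ is.

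The error is not merely a relabelling: your ``directed string modules'' would not be representations of $(Q,I)$ at all. In a string module only the arrows occurring in the walk act nontrivially, so on a module built from $\alpha_1\beta_2\alpha_1\beta_2\cdots$ the path $\alpha_1\beta_2$ would act nontrivially while $\alpha_2\beta_1$ acts by zero, contradicting $\alpha_1\beta_2+\alpha_2\beta_1\in I$. The same objection kills your proposed bands. Once the alternation rule is corrected, the rest of your plan (enumerating shapes, applying $\varphi$ and $\Ind^l_{e_A}$, checking boundary cases and the $w\mapsto w^{-1}$ symmetry) goes through and recovers the families $\Utt_{i,r}$, $\Vtt_{j,t}$, $\Wtt_{j,t}$, $\Att_{\lambda,n}$, $\Btt_{\lambda,n}$ of the statement, matching the paper's Lemma on strings and its list of non-equivalent bands.
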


To prove the Theorem, we apply the functor $\Ftt$ to the indecomposable objects in $\rep(Q,I) $ where
$\Ftt:\rep(Q,I) \to \lmod{\mathfrak{u}(\mgo)}$
is given by composition of the equivalences  of categories
\begin{align}\label{equi-cate}
\xymatrix{\rep (Q, I) \ar@{->}[r]^{\sim}_{\Ftt_1} \ar@/^2pc/[rrr]^{\Ftt}
& \rmod{\mathfrak{u}(\mgo)^{\mathtt b}}\ar@{->}[r]^{\sim}_{\Ftt_2}
&\lmod{\mathfrak{u}(\mgo)^{\mathtt b}}
\ar@{->}[r]^{\sim}_{\Ftt_3}
&\lmod{\mathfrak{u}(\mgo)}, }
\end{align}
where $\Ftt_1$ is given by \cite[III.1.6]{ass} and Theorem \ref{teo-iso-quiver};
$\Ftt_2$ is induced by he anti-isomorphism $\psi$ defined above; and $\Ftt_3 = \Ind^l_{e_{\mathfrak{u}(\mgo)}}$ given  in \eqref{functor-te}.

In Subsection \ref{sub-string} we compute the string  modules in the category
$\lmod{\mathfrak{u}(\mgo)^{\mathtt b}}$ and  apply the functor $\Ind^l_{e_{\mathfrak{u}(\mgo)}}$ from  \eqref{functor-te} 
to describe the associated indecomposable in $\lmod{\mathfrak{u}(\mgo)}$. 
In Subsection \ref{sub-band} we do the same with the band modules.

\begin{remark}
Let $\Ftt$ be the functor given by \eqref{equi-cate}. Then
\[\Ftt(S(i))=V_i,\quad \Ftt(P(i))=P(V_i),\quad i\in \I_{0,1},\]
where $S(i)$ and $P(i)$ are respectively the simple and indecomposable projective objects of $\rep(Q,I)$ of index $i$. 
The proof is straightforward.
\end{remark}

\subsection{String modules} \label{sub-string}
For the sake of the reader non-specialist in representations of finite-dimensional algebras, we sketch
the calculation of string modules.
We start with some terminology. Let $(Q,I)$ be a bound quiver. 
\begin{itemize}[leftmargin=*]\renewcommand{\labelitemi}{$\circ$}
\item A {\it zero path} is a path $p$ in $Q$ that belongs to $I$. 

\medbreak
\item   A  {\it zero-relation} of $(Q,I)$  is a zero path such that none of its proper subpaths is a zero path.

\medbreak
\item  A {\it binomial relation} of $(Q,I)$ is a pair $(p,q)$ of non-zero paths with the same source $i$ and the same target $j$
such that  $\lambda p+\mu q\in I$
for some $\lambda,\mu \in \ku^{\times}$ (unique up to a scalar). Then $p,q$ are the {\it  maximal subpaths} of 
the binomial relation $(p,q)$, while   $i$ and $j$ are their {\it start-} and  {\it end-}points.

\medbreak
\item  Given  $\alpha\in Q_1$, we denote by   $\alpha^{-1}:t(\alpha)\to s(\alpha)$ the  formal inverse of $\alpha$; this extends
to paths in $Q$.

\medbreak
\item A {\it walk} of  length $r >0$ is a sequence $u=c_1\ldots c_r$, with $c_i$ an arrow or the inverse of an arrow such that $s(c_{i+1})=t(c_{i})$, for all $i \in \I_{r-1}$.
A trivial walk of a vertex $i$ is the trivial path $\varepsilon_i$. A walk $u$ is called {\it reduced} if either $u$ is trivial or $u=c_1\ldots c_r$ with $c_{i+1}\neq c_i^{-1}$, for all $i\in \I_{r-1}$.

\medbreak
\item
Let $u=c_1\ldots c_r$  be a non-trivial reduced walk in $Q$. A non-trivial path $p$ of $Q$ is \emph{contained} in $u$ if there are $ i\leq j \in \I_{r}$ such that $p=c_i\ldots c_j$ or $p^{-1}=c_i\ldots c_j$.

\medbreak
\item
A {\it string} is a reduced walk  $u=c_1\ldots c_r$ in $Q$   such that each path contained in $u$ is neither a zero-relation nor a maximal subpath of a binomial relation.
\end{itemize}

\begin{example}
Let $(Q,I)$ be   given by \eqref{quiver} and \eqref{eq:bound}. The reduced walk $\alpha_1\beta_2\alpha_1\alpha_2^{-1}$ is not a string in $(Q,I)$ since $\alpha_1\beta_2$ is a maximal subpath of the binomial relation $(\alpha_1\beta_2,\alpha_2\beta_1)$. But  $\alpha_1\alpha_2^{-1}\alpha_1$ is a string in $(Q,I)$.
\end{example}

\begin{definition} \cite[$\S 2$]{WW}
Let $u$ be  a string  in a bound quiver $(Q,I)$.
The string module $M(u)=(M_k, \varphi_{\gamma})_{k\in Q_0, \gamma \in Q_1}\in \rep (Q,I)$ is defined by the following procedure:

\begin{itemize}[leftmargin=*]\renewcommand{\labelitemi}{$\diamond$}
\item  If $u$ is the trivial path at a vertex $i$, then $M(u)$ is the simple module at $i$. 

\medbreak
\item 
Otherwise $u=c_1\ldots c_r$, with $r\geq 1$, and $c_i$ or $c_i^{-1}$ is an arrow. For each $i \in \I_{r+1}$, we consider a one-dimensional vector space $U_i\coloneqq \ku u_i$. For $i \in \I_{r}$,  $U_{c_i}: U_i\to U_{i+1}$ denotes the map sending $u_i$ to $u_{i+1}$ if $c_i$ is an arrow and $U_{c_i}: U_{i+1}\to U_{i}$ is the map sending $u_{i+1}$ to $u_i$ if $c_i^{-1}$ is an arrow. 

\medbreak
\item Now for $k \in Q_0$, $M_k$ is the direct sum of  all $U_i$'s such that $s(c_i)=k$ or $i=r+1$ if $t(c_r)=k$; here the empty direct sum is $0$. 

\medbreak
\item 
For $\gamma \in Q_1$  appearing in $u$, $\varphi_{\gamma}$ is the direct sum of the maps $U_{c_i}$ if $c_i=\gamma$ or $c_i^{-1}=\gamma$; otherwise $\varphi_{\gamma}=0$.
\end{itemize}
As explained in \cite{HL} this  is indeed a module over the bound quiver.
\end{definition}

For the rest of this subsection $(Q,I)$ is the bound quiver given by \eqref{quiver} and \eqref{eq:bound}.
In order to determine all its strings, we consider the words
\begin{align*}
s_1 &=\alpha_1\alpha^{-1}_2, & s_2 &=\alpha^{-1}_1\alpha_2,& 
s_3 &=\beta_1\beta^{-1}_2& & \text{and}&  s_4 & = \beta^{-1}_1\beta_2
\end{align*}  in the vocabulary $\{\alpha_i,\,\alpha^{-1}_i, \, \beta_i, \, \beta^{-1}_i: i\in \I_{2}\}$.

\begin{lemma}\label{strings}
The strings in $(Q,I)$ are  
the trivial paths $\varepsilon_0$, $\varepsilon_1$, the families
\begin{align}
\label{string-uvw}
\begin{aligned}
u_i(r) &= s_i^r, && & i\in \I_4, \ r&\geq 1;   
\\ 
v_1(t) &= s_1^t\alpha_1,& v_2(t) &= \alpha_2s_2^t ,&   t&\geq 0;
\\
w_1(t) &= s_3^t\beta_1,& w_2(t) &= \beta_2s_4^t, &  t&\geq 0;
\end{aligned}
\end{align}
and the inverse families
\begin{align}\label{string-uvw-inverse}
\begin{aligned}
s_1^{-r} &= \alpha_2s_2^{r-1} \alpha_1^{-1}, &s_2^{-r} &= \alpha_2^{-1} s_1^{r-1} \alpha_1, \\
s_3^{-r} &= \beta_2s_4^{r-1} \beta_1^{-1}, &s_4^{-r} &= \beta_2^{-1} s_3^{r-1} \beta_1,
&
r&\geq 1;   
\\ 
(s_1^{t}\alpha_1)^{-1} &=  s_2^{t} \alpha_1^{-1}, & (\alpha_2s_2^{t})^{-1} &= \alpha_2^{-1} s_1^{t}, \\
(s_3^{t}\beta_1)^{-1} &=  s_4^{t} \beta_1^{-1}, &(\beta_2s_4^{t})^{-1} &= \beta_2^{-1} s_3^{t}, &  
t&\geq 0.
\end{aligned}
\end{align}
\end{lemma}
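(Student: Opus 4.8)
The plan is to determine all strings in $(Q,I)$ directly from the combinatorial definition, using the very restrictive shape of the quiver \eqref{quiver} and the ideal \eqref{eq:bound}. First I would record the bookkeeping data: the zero-relations of $(Q,I)$ are exactly $\alpha_1\beta_1$, $\alpha_2\beta_2$, $\beta_1\alpha_1$, $\beta_2\alpha_2$ (each is a path of length $2$ in $I$ with no proper subpath in $I$), and the binomial relations are the pairs $(\alpha_1\beta_2,\alpha_2\beta_1)$ with start-/end-points $(0,0)$ and $(\beta_1\alpha_2,\beta_2\alpha_1)$ with start-/end-points $(1,1)$; their maximal subpaths are the four length-$2$ paths $\alpha_1\beta_2$, $\alpha_2\beta_1$, $\beta_1\alpha_2$, $\beta_2\alpha_1$. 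So a reduced walk $u=c_1\dots c_r$ is a string iff no length-$2$ consecutive sub-walk $c_ic_{i+1}$ (read as a path, possibly after inverting) is one of these eight forbidden paths, and moreover no longer path contained in $u$ lies in $I$ — but since $R_Q^2\supseteq$ the relevant part of $I$ up to these degree-$2$ generators and the only relations are in degree $2$, it suffices to forbid the eight length-$2$ sub-paths. (One should check that no path of length $\geq 3$ is forced into $I$ by \eqref{eq:bound} other than as a consequence of containing one of these eight; this is immediate since $I$ is generated in degree $2$ and a path of length $\geq 3$ contains a forbidden length-$2$ subpath whenever it is in $I$.)

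Next I would analyze which length-$2$ reduced walks survive. Because $Q$ has only two vertices with two arrows $0\to1$ ($\alpha_1,\alpha_2$) and two arrows $1\to0$ ($\beta_1,\beta_2$), any reduced walk alternates in an essentially forced way between "$\alpha$-type" and "$\beta$-type" letters (and their inverses). A direct case check of all reduced length-$2$ walks shows the only permitted oriented-path compositions are $\alpha_i\beta_i$ forbidden, $\beta_i\alpha_i$ forbidden, $\alpha_i\beta_j$ with $i\neq j$ forbidden (binomial maximal subpath), $\beta_i\alpha_j$ with $i\neq j$ forbidden; hence \emph{no} product of two arrows is allowed, which means every string of length $\geq 2$ must at each internal vertex "turn around", i.e. contain a letter and then the inverse of another letter at that vertex. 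Concretely at vertex $1$ the only allowed two-letter patterns are $\alpha_i\alpha_j^{-1}$ and $\beta_j^{-1}\beta_i$ (for any $i,j\in\I_2$, with reducedness forcing the pair $(i,j)$ to differ only in the obvious cases), and symmetrically at vertex $0$ the patterns $\beta_i\beta_j^{-1}$ and $\alpha_j^{-1}\alpha_i$. Imposing reducedness then forces the indices to alternate, so the only strings starting with $\alpha_1$ are the walks spelling out $s_1=\alpha_1\alpha_2^{-1}$ repeatedly, optionally ending with a final $\alpha_1$; this produces precisely $u_1(r)=s_1^r$ and $v_1(t)=s_1^t\alpha_1$, and the analogous bookkeeping for the three other "corners" gives $u_2,u_3,u_4$, $v_2$, $w_1$, $w_2$ as in \eqref{string-uvw}.

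Then I would verify that the families \eqref{string-uvw} together with the two trivial paths $\varepsilon_0,\varepsilon_1$ exhaust all strings up to taking formal inverses, and compute those inverses to obtain \eqref{string-uvw-inverse}: reversing $u=c_1\dots c_r$ letter by letter and inverting, one reads off $s_1^{-r}=\alpha_2 s_2^{r-1}\alpha_1^{-1}$ and the other identities directly from the definitions $s_1=\alpha_1\alpha_2^{-1}$, $s_2=\alpha_1^{-1}\alpha_2$, $s_3=\beta_1\beta_2^{-1}$, $s_4=\beta_1^{-1}\beta_2$ — these are purely formal rewritings, e.g. $(\alpha_1\alpha_2^{-1})^{r}$ reversed is $(\alpha_2\alpha_1^{-1})^{r}=\alpha_2(\alpha_1^{-1}\alpha_2)^{r-1}\alpha_1^{-1}=\alpha_2 s_2^{r-1}\alpha_1^{-1}$. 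I expect the main obstacle to be purely organizational rather than mathematical: making the case analysis of reduced walks genuinely exhaustive without drowning in notation, in particular being careful that a string and its formal inverse give isomorphic string modules so that listing one representative per inverse-pair is legitimate, and double-checking the boundary cases $r=1$ and $t=0$ (where $v_i(0)$ and $w_i(0)$ degenerate to single arrows and $s_i^{-1}$ degenerates, since $s_2^{0}$ is a trivial path) so that the stated families match exactly. A clean way to make the enumeration rigorous is to observe that the underlying "string graph" — vertices = vertices of $Q$, edges = allowed two-letter turns — is a disjoint union of paths and cycles, and then read off that the connected components corresponding to $s_1$ and $s_3$ are infinite paths (giving the $u_i,v_i,w_i$ families) while there are no cycles, which is exactly why there will be band modules only from the two genuine cyclic strings $s_1$ and $s_3$ treated in the next subsection.
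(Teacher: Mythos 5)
Your proof is correct and takes essentially the same route the paper intends (the paper's own proof is simply ``Straightforward''): the key observation that all eight length-$2$ paths of $Q$ are either zero-relations or maximal subpaths of the two binomial relations, so that every string must strictly alternate between arrows and inverses of arrows, together with reducedness, forces exactly the four alternating families, their truncations and their formal inverses. The only blemish is the closing aside about the ``string graph'' having no cycles, which is loosely worded (both $s_1$ and $s_3$ are reduced cycles, whence the bands in the next subsection), but this does not affect the enumeration.
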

\pf
Straightforward.
\epf

\begin{remark}
Given   a string $u$ as in \eqref{string-uvw}, the inverse $u^{-1}$ is also a string and $M(u) \simeq M(u^{-1})$. 
Thus  we do not need to consider the strings in \eqref{string-uvw-inverse}.
\end{remark}

\noindent \emph{Proof of Theorem \ref{thm:clasif-indec-um} \ref{item:clasif-indec-um-string}.}
We  apply the functor $\Ftt$ given by \eqref{equi-cate} to the string modules 
$M(u)$ with $u$ as in \eqref{string-uvw}. We describe below the modules
\begin{align*}
&\Utt_{i,r}\coloneqq \Ftt(M(u_i(r))),&  i \in \I_4, &\  r\geq1;
\\
&\begin{aligned}
\Vtt_{j,t} &\coloneqq \Ftt(M(v_j(t))), \\
\Wtt_{j,t} &\coloneqq \Ftt(M(w_j(t))),
\end{aligned}& j \in \I_{2}, &\  t \geq 0.
\end{align*}

For illustration, we compute $\Utt_{1,r}$ explicitly; the others are  similar.

\medbreak  
The quiver representation 
$M(u_1(r))=(M_{k},\varphi_{\gamma})_{k\in \I_{0,1}, \gamma\in Q_1}$ is given by
\begin{align*}
M_0 &=\ku\{m_{2{\ell}+1}:\ell\in \I_{0,r}\}, &
M_1 &=  \ku\{m_{2 \ell}: \ell\in\I_r\}
\\
\varphi_{\varepsilon_0}&=\id_{M_0}, \quad \varphi_{\varepsilon_1}=\id_{M_1}, &
\varphi_{\beta_j} &= 0, \quad j \in \I_2,
\\
\varphi_{\alpha_1}(m_{2\ell+1}) &= \begin{cases}
m_{2\ell+2},& \ell \in \I_{0,r-1},\\
0,& \ell=r,
\end{cases}  &  \varphi_{\alpha_2}(m_{2\ell+1}) &= \begin{cases}
0,& \ell=0,\\
m_{2\ell-1},& \ell\in \I_r.
\end{cases}
\end{align*}
Hence $\Ftt_1(M(u_1(r)))=M_0\oplus M_1 \in \rmod{\mathfrak{u}(\mgo)^{\mathtt b}}$ with action 
\begin{align*}
&m_{\ell}\cdot\varepsilon_0= \begin{cases}
m_{\ell}, &\!\!\!\ell=2k+1, k \in \I_{0,r},\\
0,&\!\!\!\!\! \text{otherwise};
\end{cases}& &m_{\ell}\cdot\alpha_1= \begin{cases}
m_{\ell+1},&\!\!\!\! \ell=2k+1,k \in \I_{0,r-1}, \\
0,&\!\!\!\! \text{otherwise};
\end{cases}&\\[.2em]
& m_{\ell}\cdot\varepsilon_1= \begin{cases}
m_{\ell},& \!\!\!\ell=2k, k \in \I_{r},\\
0,& \!\!\!\!\text{otherwise};
\end{cases} & 
&m_{\ell}\cdot\alpha_2= \begin{cases}
m_{\ell-1},&\!\!\!\! \ell=2k+1,k \in \I_{r} \\
0,& \!\!\!\!\text{otherwise}.
\end{cases}&
\end{align*}
The arrows $\beta_j$, $j\in \I_{2}$, act trivially on $\Ftt_1(M(u_1(r)))$. By definition of $\Ftt_2$,  we get that $\Ftt_2\Ftt_1(M(u_1(r)))=M_0\oplus M_1$ is a left $\mathfrak{u}(\mgo)^{\mathtt b}$-module with action 
\begin{align*}
&e_0\cdot m_{\ell}= \begin{cases}
m_{\ell},& \!\!\!\ell=2k+1, k \in \I_{0,r}\\
0,& \!\!\!\!\text{otherwise}
\end{cases}& &ae_0\cdot m_{\ell}= \begin{cases}
m_{\ell+1},&\!\!\!\! \ell=2k+1,k \in \I_{0,r-1} \\
0,&\!\!\!\! \text{otherwise}
\end{cases}&\\[.2em]
& e_1\cdot m_{\ell}= \begin{cases}
m_{\ell},&  \!\!\!\ell=2k, k \in \I_{r}\\
0,& \!\!\!\! \text{otherwise}
\end{cases} & 
&b^3e_0\cdot m_{\ell}= \begin{cases}
m_{\ell-1},& \!\!\!\! \ell=2k+1,k \in \I_{r} \\
0,& \text{otherwise}.
\end{cases}&
\end{align*}
The other elements of  the basis \eqref{eq:base-um-basica} of $\mathfrak{u}(\mgo)^{\mathtt b}$ act trivially. 

\medbreak

Finally, we apply the functor $\Ind^l_{e_{\mathfrak{u}(\mgo)}}$ and see that $\Utt_{1,r} \in \lmod{\mathfrak{u}(\mgo)}$ 
has a basis ${z}_1=e_0\otimes m_1, z_2=e_1\otimes m_2, \ldots, z_{4r+1}=e_0\otimes m_{2r+1}$. The  action on $\Utt_{1,r}$ is described in \eqref{eq:string-action-basis} and Table \ref{table:string-action}.
The calculation for the other string modules $\Utt_{i,r}$, $\Vtt_{j,s}$, $\Wtt_{j,s}$ is similar. In all cases 
there is a basis $\{z_i\}_{i \in \I_{d}}$ where  $d$ is the dimension of the module such that
the action is given by
\begin{align}\label{eq:string-action-basis}
a \cdot z_i &= \boldsymbol{\kappa}_{i} z_{i+1}, &
b \cdot z_i &= \boldsymbol{\mu}_{i} z_{i-1}, &
c \cdot z_i &= \boldsymbol{\nu}_{i} z_{i}.
\end{align}
Here $\boldsymbol{\kappa}_{i}$  and $\boldsymbol{\mu}_{i}$ take the values 0 or 1 and we specify in Table \ref{table:string-action} the $i$'s where the value is 0. The Table also records the value of $\boldsymbol{\nu}_{i}$ 
(which is again 0 or 1).

\begin{table}[ht]
\caption{String modules: coefficients in \eqref{eq:string-action-basis}}
\label{table:string-action}
\begin{tabular}{|c|c|c|c|c|}
\hline
Family & $\dim$  & $\boldsymbol{\kappa}_{i}$  & $\boldsymbol{\mu}_{i}$ & $\boldsymbol{\nu}_{i}$ 
\\ \hline
$\Utt_{1,r}$ & $4r+1$ &  $i \equiv 0 (4)$ or $i = 4r+1$   &  $i \equiv 2 (4)$   or $i = 1$    & $i+1$
\\ \hline
$\Utt_{2,r}$ & $4r+3$ &$i \equiv 3 (4)$    &   $i \equiv 1 (4)$         & $i$
\\ \hline
$\Utt_{3,r}$ & $4r+3$ & $i \equiv 0 (4)$  or $i = 4r+3$    &  $i \equiv 0 (4)$   or $i = 1$         & $i$
\\ \hline
$\Utt_{4,r}$ & $4r+1$ & $i \equiv 1 (4)$    & $i \equiv 1 (4)$           & $i+1$
\\ \hline
$\Vtt_{1,t}$ & $4(t+1)$ & $i \equiv 0 (4)$    &    $i \equiv 2 (4)$  or $i = 1$         & $i+1$
\\ \hline
$\Vtt_{2,t}$ & $4(t+1)$ & $i \equiv 3 (4)$  or $i = 4(t+1)$    &    $i \equiv 1 (4)$        & $i$
\\ \hline
$\Wtt_{1,t}$ & $4(t+1)$ & $i \equiv 0 (4)$   or $i = 4(t+1)$   &   $i \equiv 0 (4)$     or $i = 1$      & $i$
\\ \hline
$\Wtt_{2,t}$ & $4(t+1)$ & $i \equiv 1 (4)$  or $i = 4(t+1)$    &  $i \equiv 1 (4)$  or $i = 1$          & $i+1$
\\  \hline 
\end{tabular}
\end{table}

We illustrate the string modules above (for $r=t=2$) via a directed graph where the arrows above,
oriented from left to right, indicate the action of $a$ while the 
arrows below, oriented from right to left present the action of $b$, as in \eqref{graph-1}. Also, $\circ_{z_i}$ means $\boldsymbol{\nu}_{i}=0$ while $\bullet_{z_i}$ means $\boldsymbol{\nu}_{i}=1$.
\vspace{.5cm}

{\scriptsize
\begin{align*} 
\Utt_{1,2}:&\xymatrix@C=3mm@R=6mm{   
& \circ_{z_1}  \ar@/^1pc/[r]
& \bullet_{z_2} \ar@/^1pc/[r] 
& \circ_{z_3}  \ar@/^1pc/[r] \ar@/^1pc/[l]
& \bullet_{z_4} \ar@/^1pc/[l]
&\circ_{z_5}  \ar@/^1pc/[r]\ar@/^1pc/[l]
&\bullet_{z_6}\ar@/^1pc/[r]
&\circ_{z_7}\ar@/^1pc/[r]\ar@/^1pc/[l]
&\bullet_{z_8}\ar@/^1pc/[l]\ar@/^1pc/[l]
&\circ_{z_9}\ar@/^1pc/[l]
} & \\[.6cm]
\Utt_{2,2}:& \xymatrix@C=3mm@R=6mm{   
& \bullet_{z_1} \ar@/^1pc/[r]
& \circ_{z_2} \ar@/^1pc/[r] \ar@/^1pc/[l]
& \bullet_{z_3}\ar@/^1pc/[l]
& \circ_{z_4}\ar@/^1pc/[r] \ar@/^1pc/[l]
&\bullet_{z_5} \ar@/^1pc/[r]
&\circ_{z_6} \ar@/^1pc/[r] \ar@/^1pc/[l]
&\bullet_{z_7}\ar@/^1pc/[l]
&\circ_{z_8}\ar@/^1pc/[r]\ar@/^1pc/[l]
&\bullet_{z_9}\ar@/^1pc/[r]
&\circ_{z_{10}}\ar@/^1pc/[r]\ar@/^1pc/[l]
&\bullet_{z_{11}}\ar@/^1pc/[l]
}&\\[.6cm]
\Utt_{3,2}:& \xymatrix@C=3mm@R=6mm{   
& \bullet_{z_1} \ar@/^1pc/[r]
& \circ_{z_2} \ar@/^1pc/[r] \ar@/^1pc/[l]
& \bullet_{z_3}\ar@/^1pc/[r] \ar@/^1pc/[l]
& \circ_{z_4} 
&\bullet_{z_5} \ar@/^1pc/[r]\ar@/^1pc/[l]
&\circ_{z_6} \ar@/^1pc/[r]\ar@/^1pc/[l]
&\bullet_{z_7}\ar@/^1pc/[r]\ar@/^1pc/[l]
&\circ_{z_8}
&\bullet_{z_9}\ar@/^1pc/[r]\ar@/^1pc/[l]
&\circ_{z_{10}}\ar@/^1pc/[r]\ar@/^1pc/[l]
&\bullet_{z_{11}}\ar@/^1pc/[l]
}&\\[.6cm]
\Utt_{4,2}:&\xymatrix@C=3mm@R=6mm{   
& \circ_{z_1}  
& \bullet_{z_2} \ar@/^1pc/[r] \ar@/^1pc/[l] 
& \circ_{z_3}  \ar@/^1pc/[r] \ar@/^1pc/[l]
& \bullet_{z_4} \ar@/^1pc/[l] \ar@/^1pc/[r] 
&\circ_{z_5}  
&\bullet_{z_6}\ar@/^1pc/[r] \ar@/^1pc/[l]
&\circ_{z_7}\ar@/^1pc/[r]\ar@/^1pc/[l]
&\bullet_{z_8}\ar@/^1pc/[r]\ar@/^1pc/[l]
&\circ_{z_9}
} & \\[.6cm]
\Vtt_{1,2}:& \xymatrix@C=3mm@R=6mm{   
&\circ_{z_1} \ar@/^1pc/[r]
&\bullet_{z_2} \ar@/^1pc/[r] 
&\circ_{z_3} \ar@/^1pc/[r] \ar@/^1pc/[l]
&\bullet_{z_4} \ar@/^1pc/[l]
&\circ_{z_5} \ar@/^1pc/[r]\ar@/^1pc/[l]
&\bullet_{z_6}\ar@/^1pc/[r]
&\circ_{z_7}\ar@/^1pc/[r]\ar@/^1pc/[l]
&\bullet_{z_8}\ar@/^1pc/[l]\ar@/^1pc/[l]
&\circ_{z_9}\ar@/^1pc/[r]\ar@/^1pc/[l]
&\bullet_{z_{10}}\ar@/^1pc/[r]
&\circ_{z_{11}}\ar@/^1pc/[r]\ar@/^1pc/[l]
&\bullet_{z_{12}}\ar@/^1pc/[l]
}&\\[.6cm]
\Vtt_{2,2}:& \xymatrix@C=3mm@R=6mm{   
&\bullet_{z_1} \ar@/^1pc/[r]
&\circ_{z_2} \ar@/^1pc/[r] 
&\bullet_{z_3} \ar@/^1pc/[l]
&\circ_{z_4} \ar@/^1pc/[r]\ar@/^1pc/[l]
&\bullet_{z_5} \ar@/^1pc/[r]
&\circ_{z_6}\ar@/^1pc/[r]\ar@/^1pc/[l]
&\bullet_{z_7}\ar@/^1pc/[l]
&\circ_{z_8}\ar@/^1pc/[r]\ar@/^1pc/[l]
&\bullet_{z_9}\ar@/^1pc/[r]
&\circ_{z_{10}}\ar@/^1pc/[r]\ar@/^1pc/[l]
&\bullet_{z_{11}}\ar@/^1pc/[l]
&\circ_{z_{12}}\ar@/^1pc/[l]
}&\\[.6cm]
\Wtt_{1,2}:& \xymatrix@C=3mm@R=6mm{   
&\bullet_{z_1} \ar@/^1pc/[r]
&\circ_{z_2} \ar@/^1pc/[r] \ar@/^1pc/[l]
&\bullet_{z_3} \ar@/^1pc/[r] \ar@/^1pc/[l]
&\circ_{z_4} 
&\bullet_{z_5} \ar@/^1pc/[r]\ar@/^1pc/[l]
&\circ_{z_6}\ar@/^1pc/[r]\ar@/^1pc/[l]
&\bullet_{z_7}\ar@/^1pc/[r]\ar@/^1pc/[l]
&\circ_{z_8}
&\bullet_{z_9}\ar@/^1pc/[r]\ar@/^1pc/[l]
&\circ_{z_{10}}\ar@/^1pc/[r]\ar@/^1pc/[l]
&\bullet_{z_{11}}\ar@/^1pc/[r]\ar@/^1pc/[l]
&\circ_{z_{12}}
}&\\[.6cm]
\Wtt_{2,2}:& \xymatrix@C=3mm@R=6mm{   
&\circ_{z_1} 
&\bullet_{z_2}\ar@/^1pc/[l] \ar@/^1pc/[r] 
&\circ_{z_3} \ar@/^1pc/[r] \ar@/^1pc/[l]
&\bullet_{z_4} \ar@/^1pc/[r] \ar@/^1pc/[l]
&\circ_{z_5}
&\bullet_{z_6}\ar@/^1pc/[r]\ar@/^1pc/[l]
&\circ_{z_7}\ar@/^1pc/[r]\ar@/^1pc/[l]
&\bullet_{z_8}\ar@/^1pc/[r]\ar@/^1pc/[l]
&\circ_{z_9}
&\bullet_{z_{10}}\ar@/^1pc/[r]\ar@/^1pc/[l]
&\circ_{z_{11}}\ar@/^1pc/[r]\ar@/^1pc/[l]
&\bullet_{z_{12}}\ar@/^1pc/[l]
}&
\end{align*}}

\subsection{Band modules}\label{sub-band}
We need again some terminology. Let  $(Q,I)$ be a bound quiver. 
\begin{itemize}[leftmargin=*]\renewcommand{\labelitemi}{$\circ$}
\item A {\it reduced cycle} is a non-trivial reduced walk $u=c_1\ldots c_r$ of $Q$  if $s(c_1)=s(u)=t(u)=t(c_r)$  and $c_r\neq c_1^{-1}$. Another reduced cycle $u_1$ is {\it equivalent} to $u$ if $u_1$ or $u_1^{-1}$ is of the form $c_i\ldots c_rc_1\ldots c_{i-1}$, for some $i \in \I_{2,r}$.

\medbreak
\item   A non-trivial reduced cycle of $Q$ is called a {\it band} if each of its powers is a string and it is not a power of a string of less length. 

\end{itemize}

\begin{definition} \cite[$\S 2$]{WW}
Let $u=c_1\ldots c_r$ be a band and $\phi:V\to V$ an indecomposable automorphism of a finite-dimensional vector space $V$. 
The band module $N=N(u,\phi)=(N_k,\varphi_{\gamma})_{k\in Q_0, \gamma \in Q_1}\in \rep (Q,I)$ is defined by:

\begin{itemize}[leftmargin=*]\renewcommand{\labelitemi}{$\diamond$}
\item For each $i \in \I_{r}$, take $V_i=V$. 

\medbreak
\item Let $i \in \I_{r-1}$. If $c_i$ is an arrow, then
$f_{c_i}:V_i\to V_{i+1}$ is the identity map; otherwise $f_{c_i}:V_{i+1} \to V_i$ is also the identity map. 

\medbreak
\item For $i=r$, we have that $f_{c_r}=\phi:V_r\to V_1$ if $c_r$ is an arrow and $f_{c_r}=\phi^{-1}:V_1\to V_r$ if $c_r$ is the inverse of an arrow. 

\medbreak
\item  For each  $k \in Q_0$, if $k$ appears in $u$, then $N_k$ is the direct sum of the spaces $V_i$ such that $s(c_i)=k$, and $N_k=0$  otherwise. 

\medbreak
\item
For each arrow $\gamma$ of $Q$, if $\gamma$ appears in $u$, then $\varphi_{\gamma}$ is the direct sum of the maps $f_{c_i}$ such that $c_i=\gamma$ or $c_{i}^{-1}=\gamma$, and  $\varphi_{\gamma}=0$ otherwise. 
\end{itemize}
\end{definition}
Notice that if $u$ and $u_1$ are equivalent, then $N=N(u,\phi)$ is isomorphic to $N=N(u_1,\phi)$ in $\rep(Q,I)$.

\medbreak
Assume now that $(Q,I)$ is the bound quiver given by \eqref{quiver} and \eqref{eq:bound}.
Then the non-equivalent bands of $(Q, I)$ are:
\begin{align}\label{bands}
u=\alpha_1\alpha^{-1}_2,&  &v=\beta_1\beta^{-1}_2.
\end{align}

\noindent \emph{Proof of Theorem \ref{thm:clasif-indec-um} \ref{item:clasif-indec-um-band}.}
Let $n\geq 1$,  let $V$ be an $n$-dimensional vector space and let $\phi:V\to V$  be an indecomposable automorphism.
Then $\phi$ is represented  in some basis $B$ of $V$ by the Jordan block  $J_{n}(\lambda)$ for a fixed $\lambda \in \ku^{\times}$. 
Fix vector spaces $N_0$ and $N_1$ with bases $\{\mtt_j:j\in\I_{n}\}$ and
$\{\mtt_{n+j}:j\in\I_{n}\}$ respectively. 
Then the band module $N(u,\lambda,n) \coloneqq N(u,\phi)=(N_k,\varphi_{\gamma})_{k\in Q_0, \gamma \in Q_1}\in \rep (Q,I)$ is given by:
\begin{align*}
\varphi_{\alpha_1}(\mtt_j)=\mtt_{n+j},\,\,j\in\I_{n},& &
&\varphi_{\alpha_2}(\mtt_j)= \begin{cases}
\lambda \mtt_{n+j},& j=1,\\
\lambda \mtt_{n+j}+\mtt_{n+j-1},& j\in \I_{2,n},
\end{cases} 
\end{align*}
and $\varphi_{\beta_i}=0,$ $i \in \I_2$.
Similarly,  $N(v,\lambda,n) \coloneqq N(v,\phi) = (N_k,\varphi_{\gamma})$  is given by:
\begin{align*}
&\begin{aligned}
\varphi_{\beta_1}(\mtt_{n+j})&=\mtt_{j},& j&\in\I_{n}, \\
\varphi_{\alpha_i}&=0,&  i &\in \I_2;
\end{aligned}
&
\varphi_{\beta_2}(\mtt_{n+j}) &= \begin{cases}
\lambda \mtt_{j},& j=1,\\
\lambda \mtt_{j}+\mtt_{j-1},& j\in \I_{2,n}.
\end{cases}
\end{align*}
Let $\Att_{\lambda,n}\coloneqq \Ftt(N(u,\lambda,n))$ and $\Btt_{\lambda,n}\coloneqq \Ftt(N(v,\lambda,n))$, where $n\geq 1$ and $\lambda\in \ku^{\times}$.  Then $\dim \Att_{\lambda,n}=\dim \Btt_{\lambda,n}=4n$ 
and arguing as above, we see that there are bases of  $\Att_{\lambda,n}$ and $\Btt_{\lambda,n}$, both denoted by $\{z_i: i \in \I_{1,4n}\}$, where  $\mathfrak{u}(\mgo)$ acts by 
\begin{align}\label{eq:band-action-basis}
a \cdot z_i &= \boldsymbol{\kappa}_{i} z_{i+1}, &
b \cdot z_i &= \boldsymbol{\mu}_{i} z_{i-1} + \boldsymbol{\xi}_i \lambda z_{i+3}, &
c \cdot z_i &= \boldsymbol{\nu}_{i} z_{i}.
\end{align}
Here $\boldsymbol{\kappa}_{i}$,  $\boldsymbol{\mu}_{i}$  and $\boldsymbol{\xi}_{i}$ take the values 0 or 1; 
we specify in Table \ref{table:band-action} the $i$'s where the value is 0. The Table also records the value of $\boldsymbol{\nu}_{i}$ 
(which is again 0 or 1).

\begin{table}[ht]
\caption{Band modules: coefficients in \eqref{eq:band-action-basis}}
\label{table:band-action}
\begin{tabular}{|c|c|c|c|c|}
\hline
Family &  $\boldsymbol{\kappa}_{i}$  & $\boldsymbol{\mu}_{i}$ & $\boldsymbol{\xi}_{i}$ & $\boldsymbol{\nu}_{i}$ 
\\ \hline
$\Att_{\lambda,n}$   &  $i \equiv 0 (4)$    &  $i \equiv 2 (4)$ or $i=1$  & $i \equiv 0, 2, 3 (4)$  & $i+1$
\\ \hline
$\Btt_{\lambda,n}$   &$i \equiv 0 (4)$    &   $i \equiv 0 (4)$ or $i=1$ &  $i \equiv 0, 2, 3 (4)$     & $i$
\\ \hline
\end{tabular} 
\end{table} 

We illustrate the band modules above (for $n=2$) via a directed graph where 
 the arrows above, oriented from left to right, indicate the action of $a$ while the 
arrows below indicate the actions of $b$, as in \eqref{graph-1}.
Here we have that $bz_1=\lambda z_{4}$ and $bz_5=z_4+\lambda z_{8}$; thus we have labeled arrows
below the diagram from left to right.
\vspace{.3cm}

{\scriptsize
\[\Att_{\lambda,2}:\qquad 
\begin{tikzcd}
\circ_{z_1} \arrow[r,bend left=30] \arrow[rrr,bend right=45,"\lambda"] &
\bullet_{z_2}\arrow[r,bend left=30] &
\circ_{z_3} \arrow[r,bend left=30] \arrow[l,bend left=30]&
\bullet_{z_4}\arrow[l,bend left=30]& 
\circ_{z_5}\arrow[r,bend left=30]\arrow[l,bend left=30] \arrow[rrr,bend right=45,"\lambda"]&
\bullet_{z_6}\arrow[r,bend left=30]&
\circ_{z_7}\arrow[r,bend left=30]\arrow[l,bend left=30] &
\bullet_{z_8}\arrow[l,bend left=30]\\
\end{tikzcd}
\]
\vspace{.3cm}
\[\Btt_{\lambda,2}:\qquad 
\begin{tikzcd}
\circ_{z_1} \arrow[r,bend left=30] \arrow[rrr,bend right=45,"\lambda"] &
\bullet_{z_2}\arrow[r,bend left=30]\arrow[l,bend left=30] &
\circ_{z_3} \arrow[r,bend left=30] \arrow[l,bend left=30]&
\bullet_{z_4}& 
\circ_{z_5}\arrow[r,bend left=30]\arrow[l,bend left=30] \arrow[rrr,bend right=45,"\lambda"]&
\bullet_{z_6}\arrow[r,bend left=30] \arrow[l,bend left=30]&
\circ_{z_7}\arrow[r,bend left=30]\arrow[l,bend left=30] &
\bullet_{z_8}\\
\end{tikzcd}
\]}

\begin{remark}\label{dualstring}
If $H$ is a Hopf algebra and $V\in \lmod{H}$ is indecomposable, then so is $V^{\ast}$.  Recall that the action of $H$ on $V^{\ast}$ is given by $(h\cdot \varphi)(v)=\varphi(\Ss(h)\cdot v)$, where $\Ss$ is the antipode map of $H$, $h\in H$, $\varphi\in V^{\ast}$ and $v\in V$. By direct calculations, we obtain that the duals of the string and band modules over $\mathfrak{u}(\mgo)$  are 
\begin{align}\label{eq:dual}
\Utt_{1,r}^{\ast} &\simeq \Utt_{4,r}, & 
\Utt_{2,r}^{\ast} &\simeq   \Utt_{3,r}, &
\Vtt_{1,t} ^{\ast} &\simeq   \Wtt_{1,t},   &
\Vtt_{2,t} ^{\ast} &\simeq   \Wtt_{2,t},   &
\Att_{\lambda,n}^{\ast}  &\simeq   \Btt_{\lambda,n}.
\end{align}
In fact, consider the dual basis $\{z_1^{\ast},\ldots,z_{4r+1}^{\ast}\}$ of $\Utt_{1,r}^{\ast}$. It is easy to see that

$\!\!\!\!a\cdot z^{\ast}_{i}=\begin{cases}
0,&\!\! i=4j+1,\,j\in\I_{0,r},\\
	z^{\ast}_{i-1},&\!\! \text{otherwise, }\\
\end{cases}$\qquad $b\cdot z^{\ast}_{i}=\begin{cases}
0,&\!\! i=4j+1,\,\,j\in\I_{0,r},\\
z^{\ast}_{i+1},&\!\! \text{otherwise, }
\end{cases} $ 
and $c\cdot z^{\ast}_{i}=	z^{\ast}_{i}$ whenever $i$ is even and $c\cdot z^{\ast}_{i}=0$ if $i$ is odd. Thus, reordering the dual basis in the form $\{z_{4r+1}^{\ast},\ldots,z_{1}^{\ast}\}$ we obtain that $\Utt_{1,r}^{\ast}\simeq \Utt_{4,r}$. Similarly, we prove the other isomorphisms of \eqref{eq:dual}. 
\end{remark}

\begin{remark}
If $\theta \in \Aut_{\operatorname{Lie}} (\mgo) \simeq 
\Aut_{\operatorname{Hopf}} (\mathfrak{u}(\mgo))$ and $V\in \lmod{\mathfrak{u}(\mgo)}$, 
then $V^{\theta}:=V$ with the new action $u\star v=\theta(u)\cdot v$, 
where $\cdot$ is the old action,  $u\in \mathfrak{u}(\mgo)$ and $z\in V$, is an
 $\mathfrak{u}(\mgo)$-module. Thus $\Aut (\mgo)$ acts on $\Indec \mathfrak{u}(\mgo)$. 
As an illustration, let $\theta\in \Aut (\mgo)$ be the Chevalley involution given by $\theta(a)=b$, $\theta(b)=a$ and $\theta(c)=c$. Then $\Att^{\theta}_{\lambda,2}\simeq\Att_{1/\lambda,2}$ by a suitable change of basis. 
\end{remark}

\section{A Drinfeld Double of the Jordan Plane}\label{sec:double-Jordan}
In this Section we define an infinite-dimensional Hopf algebra $\widetilde{D}$ such that $\widetilde{D} \twoheadrightarrow D(H)$ 
and describe the commutative diagram of Hopf algebras \eqref{squarem}. The material here is similar to 
\cite[\S 2]{ap} and \cite[\S 4]{ap1} for the Jordan and super Jordan planes in odd characteristic, respectively. 

\subsection{The Hopf algebra   \texorpdfstring{$\widetilde{D}$}{}} We start by defining two Hopf algebras $\widetilde{H}$ and $\widetilde{K}$ such that $\widetilde{H} \twoheadrightarrow H$ and $\widetilde{K} \twoheadrightarrow K\coloneqq  H^{\ast\,\rm{op}}$. Then $\widetilde{D}\coloneqq  \widetilde{H} \bowtie_{\sigma} \widetilde{K}$ for a suitable 2-cocycle $\sigma$ as in \cite{dt}.
Let $\widetilde{\Gamma}=\langle \bf{g}\rangle \simeq \mathbb{Z}$. By the same formulas as in \eqref{YD-structure}, the vector space $V\simeq \cV(1,2)$ becomes a Yetter-Drinfeld module over $\ku \widetilde{\Gamma}$. 
We consider the following braided Hopf algebra and its bosonization:
\begin{align*}
\widetilde{\toba}=T(V)/\langle x_1^2, x_2^2x_1+x_1x_2^2+x_1x_2x_1\rangle  \in \yd{\ku \widetilde\Gamma}, & & \widetilde{H}= \widetilde{\toba}\# \ku \widetilde{\Gamma}.
\end{align*}

Let $x_{21}\coloneqq x_2x_1+x_1x_2\in \widetilde{\toba}$. 
Notice that  $\widetilde{\toba}/\langle x_2^4\rangle \simeq \toba(V)$.

\begin{lemma}\label{lema-Htilde}
The algebra $\widetilde{H}$ is generated by  $x_1,x_2,\bf{g}^{\pm}$ with relations
\begin{align}\label{rel-Htilde}
\begin{aligned}
&x_1^2=0, & &  x_2^2x_1=x_1x_2^2+x_1x_2x_1,\\ 
&{\bf g}x_1=x_1{\bf g}, & &  {\bf g}x_2=x_2{\bf g}+x_1{\bf g},& &{\bf g}^{\pm} {\bf g}^{\mp}=1.
\end{aligned}
\end{align}
The coproduct of $\widetilde{H}$ is given as in \eqref{for-copro-boson}. 
Here is a PBW-basis of $\widetilde{H}$:
$$B=\{x_1^{l}x_{21}^{m_1}x_2^{m_2}{\bf g}^{n}\,:\, l\in \I_{0,1},\, m_1,m_2 \in \mathbb{N}_0,\, n\in \mathbb{Z} \}.$$
\end{lemma}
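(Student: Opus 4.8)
The plan is to read the presentation and the coproduct of $\widetilde H$ off the general bosonization formulas, and then to deduce the PBW basis from a PBW basis of the braided factor $\widetilde\toba$. As an algebra, $\widetilde H=\widetilde\toba\#\ku\widetilde\Gamma$ is the smash product $\widetilde\toba\rtimes\ku\widetilde\Gamma$; it is therefore generated by $\widetilde\toba$ and $\ku\widetilde\Gamma$ subject to the defining relations of each factor together with the cross relations $\mathbf g\, r=(\mathbf g\cdot r)\,\mathbf g$ for $r$ a generator of $\widetilde\toba$. With the Yetter--Drinfeld action $\mathbf g\cdot x_1=x_1$, $\mathbf g\cdot x_2=x_1+x_2$ from \eqref{YD-structure}, these cross relations become exactly the ones in \eqref{rel-Htilde}, while the remaining relations of \eqref{rel-Htilde} are the defining relations of $\widetilde\toba$ and of $\ku\widetilde\Gamma=\ku[\mathbf g^{\pm}]$; this is the reasoning behind Lemma \ref{lem-boso-restricted} (compare \cite[Corollary 3.4]{clw} and \cite[Proposition 2.2]{ap1}). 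For the comultiplication I would apply the bosonization formula for $\Delta_{\widetilde H}$ using that $x_1,x_2$ are primitive in $\widetilde\toba$ and $\delta(x_i)=\mathbf g\otimes x_i$; this gives $\mathbf g\in G(\widetilde H)$ and $x_i\in\cP{1}{\mathbf g}{\widetilde H}$, which is \eqref{for-copro-boson}.

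For the PBW basis, note that by construction $\widetilde H=\widetilde\toba\otimes\ku\widetilde\Gamma$ as vector spaces and $\{\mathbf g^n:n\in\Z\}$ is a basis of $\ku\widetilde\Gamma$, so it suffices to show that $\{x_1^{l}x_{21}^{m_1}x_2^{m_2}:l\in\I_{0,1},\ m_1,m_2\in\N_0\}$ is a basis of $\widetilde\toba$. The key observation is that, modulo $x_1^2=0$, the defining relation $x_2^2x_1+x_1x_2^2+x_1x_2x_1=0$ coincides with $x_2x_{21}+x_{21}x_2+x_1x_{21}=0$, as one checks in one line after substituting $x_{21}=x_2x_1+x_1x_2$. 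Hence $\widetilde\toba$ is, as an algebra, the super Jordan plane $sJ$, and the displayed set is (up to reordering) the PBW basis of $sJ$ established in \cite{aah-triang}. If one prefers a self-contained argument, one can instead run Bergman's Diamond Lemma directly on the presentation $T(V)/\langle x_1^2,\ x_2^2x_1+x_1x_2^2+x_1x_2x_1\rangle$ with a degree-lexicographic order $x_2>x_1$ (leading terms $x_1^2$ and $x_2^2x_1$, a single resolvable overlap), obtaining a monomial basis that is then brought to the stated form by a triangular change of variables. Combining this with the first half yields that $B$ is a basis of $\widetilde H$.

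The one step that is not purely formal is establishing the PBW basis of $\widetilde\toba$; I would obtain it from the already known structure of the super Jordan plane rather than redo the computation, and everything else is bookkeeping with the standard smash-product and bosonization identities.
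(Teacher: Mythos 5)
Your proposal is correct, and its logical skeleton is the same as the paper's: a surjection from the presented algebra onto $\widetilde H$ together with a spanning set that maps onto a linearly independent set. The difference is that the paper simply declares ``it is clear that $B$ is a PBW-basis of $\widetilde H$'' and leaves the spanning argument implicit, whereas you supply the actual content of that step: since $\widetilde H=\widetilde\toba\otimes\ku\widetilde\Gamma$ as vector spaces, everything reduces to a basis of $\widetilde\toba$, which you obtain either by observing that modulo $x_1^2$ the relation $x_2^2x_1+x_1x_2^2+x_1x_2x_1$ equals $x_2x_{21}+x_{21}x_2+x_1x_{21}$ (a computation I checked; it is right in characteristic $2$), so that $\widetilde\toba$ is the super Jordan plane as an algebra, or by a direct Diamond Lemma run. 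Two small caveats. First, the PBW basis of $sJ$ in \cite{aah-triang} is established in characteristic zero, so if you cite it you should say why it transfers; your Diamond Lemma alternative is characteristic-free (all coefficients are $1$) and is the cleaner route here. Second, with leading terms $x_1^2$ and $x_2^2x_1$ there are two overlap ambiguities, $x_1^3$ and $x_2^2x_1^2$, not one; the first is trivially resolvable and the second resolves as you expect, so this is only an imprecision of wording, not a gap. The irreducible monomials are then exactly the leading terms $x_1^{l}(x_2x_1)^{m_1}x_2^{m_2}$ of the elements of $B$, which completes the triangularity argument you sketch.
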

\pf
It is clear that $B$ is a PBW-basis of $\widetilde{H}$. Let $A$ be the algebra presented by relations \eqref{rel-Htilde}. Using that the relations are valid in $\widetilde{H}$, we obtain an epimorphism of Hopf algebras $A \twoheadrightarrow \widetilde{H}$. Since $A$ is linearly generated by $B$ and its image by the morphism is linear independent, the result follows.
\epf

\begin{lemma}\label{commutation}
The following relations hold in $\widetilde{H}$, for $m,n\in \N_0$:
\begin{align}
\label{eq-1} &(x_1+x_2)^{2n}=x_2^{2n}+nx_{21}x_2^{2n-2},\\[.3em]
\label{eq0}  &(x_1+x_2)^{2n+1}=x_2^{2n+1}+x_1x_2^{2n}+nx_{21}x_2^{2n-1},\\[.3em]
\label{eq1} &x_{21}x_1=x_1x_{21},\qquad\qquad\qquad\qquad\qquad\,\,\, x_2^mx_{21}^{2n+1}=x_{21}^{2n+1}(x_1+x_2)^m,\\[.3em]
\label{eq2} & x_2^{2n}x_1=x_1x_2^{2n}+nx_1x_{21}x_2^{2n-2}, \qquad\qquad\! x_2^{m}x_{21}^{2n}=x_{21}^{2n}x_2^{m},\\[.3em]
\label{eq3} &x_2^{2n+1}x_1=x_1x_2^{2n+1}+nx_{21}\big(x_2^{2n}+x_1x_2^{2n-1}+x_{21}x_2^{2n-2}\big)+(n+1)x_{21}^{2n},\\[.3em]
\label{eq4} &{\bf g}x_1=x_1{\bf g},\qquad\qquad\qquad\qquad\qquad\quad\quad{\bf g}x_{21}=x_{21}{\bf g},\\[.3em]
\label{eq5}&{\bf g}^{2m+1}x_2^{n}=(x_1+x_2)^{n}{\bf g}^{2m+1},\qquad\qquad\,\, {\bf g}^{2m}x_2^{n}=x_2^{n}{\bf g}^{2m}.
\end{align}
\end{lemma}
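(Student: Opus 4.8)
The plan is to prove all the identities by induction on $n$ (or $m$), using the presentation of $\widetilde{H}$ from Lemma \ref{lema-Htilde} together with the PBW-basis $B$, which guarantees that any relation we verify on generators propagates correctly. I would organize the proof so that earlier identities feed into later ones. First I would record the two ``atomic'' commutation relations: from $x_2^2x_1 = x_1x_2^2 + x_1x_2x_1$ and the definition $x_{21} = x_2x_1 + x_1x_2$ one gets $x_2^2x_1 = x_1x_2^2 + x_1x_{21} + x_1^2x_2 = x_1x_2^2 + x_1x_{21}$ (using $x_1^2=0$), which is the $n=1$ case of \eqref{eq2}. Likewise a short computation with $x_1^2=0$ gives $x_{21}x_1 = x_1x_{21}$, the first half of \eqref{eq1}, and then $x_2x_{21} = x_{21}x_2 + x_{21}x_1 = x_{21}(x_1+x_2)$ and $x_2x_{21}^2 = x_{21}^2 x_2$ by squaring in characteristic $2$; these are the seeds for \eqref{eq1}--\eqref{eq2}.

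\medskip

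Next I would prove \eqref{eq4} and \eqref{eq5} first, since they are the simplest and are independent of the others. The relation $\mathbf{g}x_1 = x_1\mathbf{g}$ is a defining relation; $\mathbf{g}x_{21} = x_{21}\mathbf{g}$ follows because $\mathbf{g}(x_2x_1+x_1x_2)\mathbf{g}^{-1} = (\mathbf{g}x_2\mathbf{g}^{-1})x_1 + x_1(\mathbf{g}x_2\mathbf{g}^{-1}) = (x_2+x_1)x_1 + x_1(x_2+x_1) = x_2x_1+x_1x_2$, again using $x_1^2=0$ and characteristic $2$. For \eqref{eq5}, the map $x_i \mapsto \mathbf{g}x_i\mathbf{g}^{-1}$ is the algebra automorphism sending $x_1\mapsto x_1$, $x_2 \mapsto x_1+x_2$; hence $\mathbf{g}x_2^n\mathbf{g}^{-1} = (x_1+x_2)^n$, and $\mathbf{g}^2 x_2^n \mathbf{g}^{-2} = (x_1 + (x_1+x_2))^n = x_2^n$, which extends to all even/odd powers of $\mathbf{g}$.

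\medskip

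I would then handle the binomial-type expansions \eqref{eq-1} and \eqref{eq0} by induction on $n$, multiplying the $n$-th case by $(x_1+x_2)^2$ and using the already-established $n=1$ case of \eqref{eq2} to move $x_1$ past powers of $x_2$; here one exploits that $(x_1+x_2)^2 = x_2^2 + x_{21}$ (since $x_1^2=0$) and that $x_{21}$ is central modulo the subalgebra generated by $x_1,x_{21}$ in the relevant sense. With \eqref{eq-1}, \eqref{eq0} in hand, \eqref{eq1} and \eqref{eq2} follow by combining the seed relations with these expansions (the identity $x_2^m x_{21}^{2n+1} = x_{21}^{2n+1}(x_1+x_2)^m$ reduces, using centrality of $x_{21}^2$ from \eqref{eq2}, to the single relation $x_2 x_{21} = x_{21}(x_1+x_2)$ raised appropriately). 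Finally \eqref{eq3} is obtained by writing $x_2^{2n+1}x_1 = x_2 \cdot (x_2^{2n}x_1)$, substituting \eqref{eq2}, and then commuting the remaining $x_2$ past $x_1$, $x_{21}$, and powers of $x_2$ using \eqref{eq1}--\eqref{eq2}; the bookkeeping of the coefficients $n$ and $n+1$ modulo $2$ is the only delicate point.

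\medskip

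The main obstacle will be the last identity \eqref{eq3}: unlike the others it mixes all four types of terms ($x_2^{2n}$, $x_1x_2^{2n-1}$, $x_{21}x_2^{2n-2}$, $x_{21}^{2n}$) and one must be careful that the characteristic-$2$ coefficients come out exactly as stated rather than off by a sign or a shift. I expect the cleanest route is to prove \eqref{eq3} by induction on $n$, using the inductive hypothesis together with \eqref{eq2} and \eqref{eq1}, and checking the base case $n=0$ (which is just $x_2x_1 = x_1x_2 + x_{21}$, i.e. the definition of $x_{21}$) and $n=1$ by direct expansion. Everything else is a routine, if slightly tedious, induction, and the PBW-basis $B$ of Lemma \ref{lema-Htilde} certifies that no relations are being lost.
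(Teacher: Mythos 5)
Your strategy is the same as the paper's, which offers only ``by induction on $m,n$'': you work from the presentation of Lemma \ref{lema-Htilde}, and your seed computations are all correct ($x_2^2x_1=x_1x_2^2+x_1x_{21}$, $x_{21}x_1=x_1x_{21}$, $x_2x_{21}=x_{21}(x_1+x_2)$, $(x_1+x_2)^2=x_2^2+x_{21}$, and the fact that conjugation by $\mathbf{g}$ is the automorphism $x_1\mapsto x_1$, $x_2\mapsto x_1+x_2$). The inductions you sketch do establish \eqref{eq-1}, \eqref{eq0}, \eqref{eq1}, \eqref{eq2}, \eqref{eq4} and \eqref{eq5} as stated.

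There is, however, a genuine problem with your treatment of \eqref{eq3}, precisely at the point you yourself single out as delicate. You assert that the base case $n=0$ of \eqref{eq3} ``is just $x_2x_1=x_1x_2+x_{21}$, i.e.\ the definition of $x_{21}$.'' It is not: at $n=0$ the terms with coefficient $n$ vanish and the formula as printed reads $x_2x_1=x_1x_2+(0+1)x_{21}^{0}=x_1x_2+1$, which is false. The mismatch is not confined to the base case: carrying out exactly the induction you propose (multiply \eqref{eq2} on the left by $x_2$ and commute) yields
\begin{align*}
x_2^{2n+1}x_1&=x_1x_2^{2n+1}+x_{21}x_2^{2n}+n\,x_1x_{21}x_2^{2n-1}+n\,x_{21}^2x_2^{2n-2},
\end{align*}
which agrees with \eqref{eq3} only for $n\equiv 1\ (2)$. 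For instance, at $n=2$ one gets $x_2^5x_1=x_1x_2^5+x_{21}x_2^4$, whereas \eqref{eq3} gives $x_1x_2^5+x_{21}^4$; these are distinct PBW monomials. So the identity \eqref{eq3} as printed appears to contain a typo (the coefficient of $x_{21}x_2^{2n}$ should be $1$ rather than $n$, and the summand $(n+1)x_{21}^{2n}$ should be absent), and your proposal, by quoting a base case that the printed formula does not actually specialize to, neither verifies \eqref{eq3} as stated nor detects that it needs correcting. To complete the argument you must either prove the corrected identity displayed above or reconcile it with the printed one; as written, the induction you describe terminates in a formula different from the one claimed.
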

\pf
By induction on $m,n$.
\epf

Let $\ku[\zeta]$ be the polynomial algebra which is a Hopf algebra with $\zeta$ pri\-mitive. Observe that we have a Hopf algebra epimorphism $\ku[\zeta] \twoheadrightarrow \ku^{\Gamma}$, where $\Gamma=\langle g\rangle$ and $g^2=1$. By \eqref{struc-yd-dual}, the vector space $W$ generated by $w_1,w_2$ is a Yetter-Drinfeld module over $\ku[\zeta]$. So we consider 
\begin{align*}
\widetilde{\mathfrak{B}}=T(W)/(w_1^2, w_2^2w_1+w_1w_2^2+w_1w_2w_1) \in \yd{\ku[\zeta]}, & & \widetilde{K}= (\widetilde{\mathfrak{B}} \# \ku [\zeta])^{\rm op}.
\end{align*}

\begin{lemma}
The algebra $\widetilde{K}$ is presented by generators $w_1,w_2, \zeta$ and relations
\begin{align}\label{rel-Ktilde}
\begin{aligned}
& w_1^2=0, & & w_2^2w_1=w_1w_2^2+w_1w_2w_1,\\ 
& w_1\zeta =(1+\zeta)w_1, & & w_2\zeta=(1+\zeta)w_2.
\end{aligned}
\end{align}
The coproduct of $\widetilde{K}$ is given as in \eqref{for-copro-dual-boson}. The set 
\[\{\zeta^{m_1}w_1^{l}w_{21}^{m_2}w_2^{m_3}\,:\, l\in \I_{0,1},\, m_i \in \mathbb{N}_0,\, i \in \I_3 \}\] is a PBW-basis of $\widetilde{K}$.
\end{lemma}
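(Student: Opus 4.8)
The plan is to follow verbatim the strategy of the proof of Lemma~\ref{lema-Htilde}, carrying everything through the braided-opposite identification used in Lemma~\ref{lem:nichols-dual} and through the functor ``opposite algebra''. First I would pin down a PBW-basis of the pre-Nichols algebra $\widetilde{\mathfrak{B}}=T(W)/(w_1^2,\,w_2^2w_1+w_1w_2^2+w_1w_2w_1)$. As in the proof of Lemma~\ref{lem:nichols-dual} one has $(W,c^{\ast})\simeq (V,c^{-1})$ as braided vector spaces, so by \cite[Lemma 1.11]{ahs} the algebra $\widetilde{\mathfrak{B}}$ is presented by the same relations as $\widetilde{\toba}$ with $w_i$ in place of $x_i$; hence, exactly as in Lemma~\ref{lema-Htilde}, it has the PBW-basis $\{w_1^{l}w_{21}^{m_2}w_2^{m_3}:l\in\I_{0,1},\ m_2,m_3\in\N_0\}$. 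Consequently the bosonization $\widetilde{\mathfrak{B}}\#\ku[\zeta]$ has a PBW-basis obtained by appending powers of $\zeta$, and from the smash-product structure and from $w_1,\zeta$ being primitive together with $\Delta(w_2)=w_2\otimes 1+1\otimes w_2+\zeta\otimes w_1$ one reads off that its coalgebra is the one of \eqref{for-copro-dual-boson}.

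Next I would transport this to $\widetilde{K}=(\widetilde{\mathfrak{B}}\#\ku[\zeta])^{\rm op}$. Passing to the opposite algebra does not change the underlying coalgebra, so the coproduct of $\widetilde{K}$ is still given by \eqref{for-copro-dual-boson}. On the multiplicative side, the defining relations $w_1^2=0$ and $w_2^2w_1=w_1w_2^2+w_1w_2w_1$ of $\widetilde{\mathfrak{B}}$ become the displayed relations of \eqref{rel-Ktilde} when the monomials are read in the opposite order, while $w_i\zeta=(1+\zeta)w_i$ in $\widetilde{K}$ is precisely the relation $\zeta w_i=(\zeta\cdot w_i)+w_i\zeta=w_i+w_i\zeta$ of $\widetilde{\mathfrak{B}}\#\ku[\zeta]$ (using the Yetter--Drinfeld action $\zeta\cdot w_i=w_i$ lifted from \eqref{struc-yd-dual} and $\Delta(\zeta)=\zeta\otimes 1+1\otimes\zeta$) read in $\widetilde{K}$. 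Moreover the set $\{\zeta^{m_1}w_1^{l}w_{21}^{m_2}w_2^{m_3}:l\in\I_{0,1},\ m_i\in\N_0\}$, with products now computed in $\widetilde{K}$, is again a basis: reversing the order of the factors of a PBW-basis yields a PBW-basis, the change of basis being triangular with respect to the obvious length and $\zeta$-degree filtration, since the commutation relations $w_i\zeta=(1+\zeta)w_i$ together with $w_{21}=w_1w_2+w_2w_1$ and the identities for $w_{21}$ (derived as in Lemma~\ref{commutation}) are filtered and the relations of $\widetilde{\mathfrak{B}}$ are homogeneous.

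With these preliminaries the argument closes exactly as in Lemma~\ref{lema-Htilde}: letting $A$ be the algebra presented by $w_1,w_2,\zeta$ and the relations \eqref{rel-Ktilde}, the previous paragraph provides an algebra epimorphism $A\twoheadrightarrow\widetilde{K}$, and from \eqref{rel-Ktilde} one checks that $A$ is linearly spanned by the monomials $\{\zeta^{m_1}w_1^{l}w_{21}^{m_2}w_2^{m_3}\}$; since these are mapped onto the basis of $\widetilde{K}$ just exhibited, the epimorphism is an isomorphism, which also shows the displayed set is a PBW-basis.

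The only point that needs care is the bookkeeping: tracking how the ``opposite'' functor acts on the defining relations and on the ordering of the PBW-basis, and double-checking the YD action of $\ku[\zeta]$ on $W$. Once this is done, everything is a transcription of the $\widetilde{H}$ case together with Lemma~\ref{lem:nichols-dual}.
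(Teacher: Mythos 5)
Your proposal is correct and follows essentially the same route as the paper, whose proof of this lemma is literally ``similar to Lemma~\ref{lema-Htilde}'': transfer the PBW-basis from $\widetilde{\toba}$ to $\widetilde{\mathfrak{B}}$ via the algebra isomorphism, read the smash-product and defining relations in the opposite algebra, and then run the span-versus-linear-independence argument for the presented algebra $A\twoheadrightarrow\widetilde{K}$. Your extra bookkeeping (the reversal of monomials, the computation $\zeta w_i=w_i(1+\zeta)$ from the lifted YD action, and the triangularity of the reordering) is exactly the content hidden in the paper's word ``similar''.
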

\pf 
It is similar to Lemma \ref{lema-Htilde}.
\epf

The algebras $\widetilde{\mathfrak{B}}$ and $\widetilde{\toba}$ are isomorphic. Hence the relations \eqref{eq1}, \eqref{eq2}, \eqref{eq3} are valid in $\widetilde{\mathfrak{B}}$ with $w_i$ instead $x_i$, $i \in \I_2$, and $w_{21}\coloneqq w_2w_1+w_1w_2$ instead $x_{21}$. However $\widetilde{\mathfrak{B}}$ and $\widetilde{\toba}$ are not isomorphic as coalgebras.

\begin{lemma}\label{lem-relations-dual}
The following relations hold in $\widetilde{K}$, for all $n \in \N_0, i \in \I_2$:
\begin{align}
\label{eq6} & w_i\zeta^n=(1+\zeta)^nw_i, & &   w_{21}\zeta=\zeta w_{21},\\
\label{eq7} &  w_2^{2n}\zeta =\zeta w_2^{2n} , & &  w_2^{2n+1}\zeta =(1+\zeta)w_2^{2n+1}.
\end{align}
\end{lemma}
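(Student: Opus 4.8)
The plan is to prove relations \eqref{eq6} and \eqref{eq7} by induction on $n$, exactly as stated in the one-line proof of Lemma \ref{commutation}, using the defining relations \eqref{rel-Ktilde} of $\widetilde{K}$ together with the relations \eqref{eq1}--\eqref{eq3} transported from $\widetilde{\toba}$ to $\widetilde{\mathfrak{B}}$ (valid since $\widetilde{\mathfrak{B}} \simeq \widetilde{\toba}$ as algebras). First I would establish $w_i\zeta^n=(1+\zeta)^nw_i$ for $i\in\I_2$: the case $n=1$ is the defining relation, and if it holds for $n$ then $w_i\zeta^{n+1}=(w_i\zeta^n)\zeta=(1+\zeta)^nw_i\zeta=(1+\zeta)^n(1+\zeta)w_i=(1+\zeta)^{n+1}w_i$. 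This is immediate and carries no real content.

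Next I would treat $w_{21}\zeta=\zeta w_{21}$. Since $w_{21}=w_2w_1+w_1w_2$, one computes $w_{21}\zeta=w_2w_1\zeta+w_1w_2\zeta=w_2(1+\zeta)w_1+w_1(1+\zeta)w_2=w_2w_1+w_2\zeta w_1+w_1w_2+w_1\zeta w_2$. Using $w_i\zeta=(1+\zeta)w_i=w_i+\zeta w_i$ once more gives $w_2\zeta w_1 = (w_2+\zeta w_2)$ pushed past... more carefully: $w_2\zeta w_1=(1+\zeta)w_2w_1$ is wrong direction, so instead I rewrite $\zeta w_1 = w_1\zeta + w_1$ from the defining relation (equivalently $w_1\zeta=(1+\zeta)w_1=w_1+\zeta w_1$, hence $\zeta w_1 = w_1\zeta + w_1$ since $\operatorname{char}\ku=2$). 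Substituting, the linear-in-$w_{21}$ terms cancel in characteristic $2$ and one is left with $\zeta w_{21}$; I would present this short computation carefully, since the sign bookkeeping (all $+$ because $\car\ku=2$) is the only place one can slip.

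For \eqref{eq7}, I would argue by induction on $n$, splitting into the even and odd powers of $w_2$ simultaneously. The base case $n=0$ reads $\zeta = \zeta$ and $w_2\zeta=(1+\zeta)w_2$, the latter being a defining relation. For the inductive step, write $w_2^{2n+2}\zeta = w_2^{2n+1}(w_2\zeta) = w_2^{2n+1}(1+\zeta)w_2 = w_2^{2n+1}w_2 + (w_2^{2n+1}\zeta)w_2 = w_2^{2n+2} + (1+\zeta)w_2^{2n+1}w_2 = w_2^{2n+2} + (1+\zeta)w_2^{2n+2}$; in characteristic $2$ the two copies of $w_2^{2n+2}$ combine with the $1$ from $(1+\zeta)$ to leave $\zeta w_2^{2n+2}$. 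Similarly $w_2^{2n+3}\zeta = w_2^{2n+2}(w_2\zeta) = w_2^{2n+2}(1+\zeta)w_2 = w_2^{2n+3} + (w_2^{2n+2}\zeta)w_2 = w_2^{2n+3} + \zeta w_2^{2n+3} = (1+\zeta)w_2^{2n+3}$. This closes the induction.

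The main obstacle, such as it is, is purely organizational: keeping the even/odd dichotomy for powers of $w_2$ straight and remembering that every minus sign is a plus. There is no conceptual difficulty; the relations \eqref{eq6}, \eqref{eq7} are the exact dual analogues of \eqref{eq4}, \eqref{eq5} under the correspondence $\gb\leftrightarrow$ (the grouplike generating $\zeta$), and I would simply remark that the proof mirrors that of Lemma \ref{commutation}, spelling out only the two or three lines above and leaving the remaining inductions to the reader with the phrase "by induction on $n$."

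\begin{proof}
All the asserted identities follow by a straightforward induction on $n$ from the defining relations \eqref{rel-Ktilde} of $\widetilde{K}$ (together with the relations \eqref{eq1}--\eqref{eq3} which hold in $\widetilde{\mathfrak{B}}$, since $\widetilde{\mathfrak{B}}\simeq\widetilde{\toba}$ as algebras), keeping in mind that $\car\ku=2$.

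For the first relation in \eqref{eq6}, the case $n=1$ is \eqref{rel-Ktilde}; assuming it for $n$ we get
\begin{align*}
w_i\zeta^{n+1}=(w_i\zeta^n)\zeta=(1+\zeta)^n w_i\zeta=(1+\zeta)^n(1+\zeta)w_i=(1+\zeta)^{n+1}w_i.
\end{align*}
For the second relation in \eqref{eq6}, write $w_{21}=w_2w_1+w_1w_2$ and use $w_i\zeta=w_i+\zeta w_i$, equivalently $\zeta w_i=w_i\zeta+w_i$ (as $\car\ku=2$), to compute
\begin{align*}
w_{21}\zeta&=w_2(w_1\zeta)+w_1(w_2\zeta)=w_2w_1+w_2\zeta w_1+w_1w_2+w_1\zeta w_2\\
&=w_2w_1+w_1w_2+w_2(w_1\zeta+w_1)+w_1(w_2\zeta+w_2)\\
&=w_2w_1\zeta+w_1w_2\zeta=\zeta(w_2w_1)+\zeta(w_1w_2)=\zeta w_{21},
\end{align*}
where in the last line the two copies of $w_{21}=w_2w_1+w_1w_2$ cancel.

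For \eqref{eq7} we induct on $n$; the case $n=0$ is trivial for the first identity and is \eqref{rel-Ktilde} for the second. Assume both hold for $n$. Then
\begin{align*}
w_2^{2n+2}\zeta&=w_2^{2n+1}(w_2\zeta)=w_2^{2n+1}(1+\zeta)w_2=w_2^{2n+2}+(w_2^{2n+1}\zeta)w_2\\
&=w_2^{2n+2}+(1+\zeta)w_2^{2n+2}=\zeta w_2^{2n+2},
\end{align*}
and
\begin{align*}
w_2^{2n+3}\zeta&=w_2^{2n+2}(w_2\zeta)=w_2^{2n+2}(1+\zeta)w_2=w_2^{2n+3}+(w_2^{2n+2}\zeta)w_2\\
&=w_2^{2n+3}+\zeta w_2^{2n+3}=(1+\zeta)w_2^{2n+3},
\end{align*}
which completes the induction.
\end{proof}
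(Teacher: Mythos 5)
Your overall strategy coincides with the paper's (whose proof is just ``by induction on $n$''): the first identity in \eqref{eq6} and both identities in \eqref{eq7} are handled correctly, and the even/odd interleaving of the induction for \eqref{eq7} is sound. However, your displayed computation of $w_{21}\zeta=\zeta w_{21}$ is circular. After the first line you have $w_2w_1+w_2\zeta w_1+w_1w_2+w_1\zeta w_2$, and you then substitute $\zeta w_1=w_1\zeta+w_1$ and $\zeta w_2=w_2\zeta+w_2$ --- but this moves $\zeta$ back to the \emph{right}, so after the cancellations you land on $w_2w_1\zeta+w_1w_2\zeta$, which is exactly $w_{21}\zeta$, the expression you started from. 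The final step $w_2w_1\zeta+w_1w_2\zeta=\zeta(w_2w_1)+\zeta(w_1w_2)$ is then asserted without justification, and it is precisely (a refinement of) the claim being proved; nothing in the preceding lines establishes it.

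The repair is one line: in the term $w_2\zeta w_1$ you must instead rewrite the \emph{left} factor, $w_2\zeta=w_2+\zeta w_2$, so that $\zeta$ continues to migrate leftward:
\begin{align*}
w_2(w_1\zeta)&=w_2(w_1+\zeta w_1)=w_2w_1+(w_2+\zeta w_2)w_1=\zeta w_2w_1,
\end{align*}
and symmetrically $w_1(w_2\zeta)=\zeta w_1w_2$; summing gives $w_{21}\zeta=\zeta w_{21}$. (Equivalently: $[\zeta,w_i]=w_i$ makes $\ad\zeta$ a derivation killing any product of two $w$'s in characteristic $2$.) With that substitution corrected, your write-up is a complete and correct expansion of the paper's one-line induction.
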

\pf
It follows by induction on $n$.
\epf

In order to define $\widetilde{D}$, we need a skew-pairing $\tau$ between $\widetilde{H}$ and $\widetilde{K}$ as in \cite{dt}, that is, a linear map $\tau: \widetilde{H} \otimes \widetilde{K} \to \ku$ satisfying 
\begin{align}\label{s-p}
\begin{aligned}
&\tau(h\widetilde{h}\otimes k)=\tau(h \otimes k_{(1)})\tau(\widetilde{h} \otimes k_{(2)}),& &  \tau(1 \otimes k)=\varepsilon(k),\\
&\tau(h \otimes \widetilde{k}k)=\tau(h_{(1)}\otimes k) \tau(h_{(2)}\otimes \widetilde{k}),& &  \tau(h \otimes 1)=\varepsilon(h),
\end{aligned}
\end{align}
for all $h, \widetilde{h} \in \widetilde{H}$, $k, \widetilde{k} \in \widetilde{K}$. Then $\tau$ is convolution invertible with inverse $\tau^{-1}(h,k)=\tau(\Ss(h),k)$, $h \in \widetilde{H},\, k \in \widetilde{K}$. Thus a skew-pairing $\tau$ is equivalent to a Hopf algebra map $\phi$ from $\widetilde{H}^{\rm{cop}}$ to the Sweedler dual $\widetilde{K}^{\circ}$ of $\widetilde{K}$.

\begin{lemma}
There exists a unique skew-pairing $\tau: \widetilde{H} \otimes \widetilde{K} \to \ku$ such that
\begin{align*}
&\tau(x_1 \otimes w_i)=\delta_{i,2}, & & \tau(x_2 \otimes w_i)=\delta_{i,1}, & & \tau({\bf g}^{\pm} \otimes w_i)=0,\,\, i \in \I_2,\\
&\tau(x_i \otimes \zeta)=0,& & \tau(x_2 \otimes \zeta)=0, & & \tau({\bf g}^{\pm} \otimes \zeta )= 1. & &
\qed
\end{align*}
\end{lemma}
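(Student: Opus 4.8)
The plan is to exploit the equivalence recalled just above between skew-pairings $\tau\colon \widetilde{H}\otimes\widetilde{K}\to\ku$ and Hopf algebra maps $\phi\colon\widetilde{H}^{\mathrm{cop}}\to\widetilde{K}^{\circ}$, normalized so that $\tau(h\otimes k)=\phi(h)(k)$; this is an honest bijection (cf.\ \cite{dt}), the four axioms of a skew-pairing corresponding termwise to $\phi$ being a unital, counital algebra map and a coalgebra map on $\widetilde{H}^{\mathrm{cop}}$. Uniqueness is then immediate: by Lemma~\ref{lema-Htilde} the algebra $\widetilde{H}$ is generated by $x_1,x_2,\mathbf{g}^{\pm1}$, so an algebra map $\phi$ — hence $\tau$ — is determined by the values $\tau(x_i\otimes-)$ and $\tau(\mathbf{g}^{\pm1}\otimes-)$ on the algebra generators $w_1,w_2,\zeta$ of $\widetilde{K}$, which are exactly the ones prescribed. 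All the content is therefore the existence of $\phi$, and by the presentation \eqref{rel-Htilde} of $\widetilde{H}$ this amounts to producing in $\widetilde{K}^{\circ}$ a grouplike element $\mathfrak{g}$, its convolution inverse $\mathfrak{g}^{-1}$, and two elements $\chi_1,\chi_2$ which are $(\mathfrak{g},\varepsilon)$-skew-primitive (i.e.\ $\Delta_{\widetilde{K}^{\circ}}(\chi_i)=\chi_i\otimes\mathfrak{g}+\varepsilon\otimes\chi_i$, matching $\Delta^{\mathrm{cop}}(x_i)=x_i\otimes\mathbf{g}+1\otimes x_i$), having the prescribed values on $w_1,w_2,\zeta$ and satisfying the relations \eqref{rel-Htilde} inside the convolution algebra $\widetilde{K}^{\circ}$; one then invokes the universal property of \eqref{rel-Htilde} together with the fact that an algebra map out of $\widetilde{H}^{\mathrm{cop}}$ sending the generators to elements with the correct grouplike/skew-primitive behaviour is automatically a bialgebra, hence Hopf, map.

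First I would build the values on generators. Grouplike elements of $\widetilde{K}^{\circ}$ are the characters of $\widetilde{K}$; let $\mathfrak{g}$ be the character with $\mathfrak{g}(w_1)=\mathfrak{g}(w_2)=0$ and $\mathfrak{g}(\zeta)=1$. It is well defined because the relations \eqref{rel-Ktilde} are respected: each term of the quadratic and cubic relation is a word in the $w_i$'s and so maps to $0$, while $w_i\zeta$ and $(1+\zeta)w_i$ both map to $0$ (in characteristic $2$, $1+1=0$). It factors through the one-dimensional quotient $\widetilde{K}/\big(\langle w_1,w_2\rangle+(\zeta-1)\big)$, hence $\mathfrak{g}\in\widetilde{K}^{\circ}$, with convolution inverse $\mathfrak{g}^{-1}=\mathfrak{g}\circ\Ss_{\widetilde{K}}$ (which has the same values on the generators). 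For $\chi_i$ I would take the $(\mathfrak{g},\varepsilon)$-twisted derivations: functionals on $\widetilde{K}$ with $\chi_i(k\widetilde{k})=\chi_i(k)\mathfrak{g}(\widetilde{k})+\varepsilon(k)\chi_i(\widetilde{k})$, fixed by $\chi_1(w_2)=1$, $\chi_1(w_1)=\chi_1(\zeta)=0$ and $\chi_2(w_1)=1$, $\chi_2(w_2)=\chi_2(\zeta)=0$. Such a $\chi_i$ is first defined on the free algebra $\ku\langle w_1,w_2,\zeta\rangle$ — uniquely, by the twisted Leibniz rule from its values on generators — and then descends to $\widetilde{K}$, since $\mathfrak{g}$, $\varepsilon$ and $\chi_i$ all vanish on each of the relations in \eqref{rel-Ktilde}, so $\chi_i$ vanishes on the two-sided ideal they generate. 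Using the $\N_0$-grading of $\widetilde{K}$ by total degree in $w_1,w_2$ and the PBW basis stated above, one checks that $\chi_i$ vanishes in $w$-degree $\neq 1$ and on $\zeta^m w_j$ for $m\ge1$, so $\chi_i$ is supported on the two-dimensional subspace $\ku w_1+\ku w_2$ and factors through a finite-dimensional quotient of $\widetilde{K}$; hence $\chi_i\in\widetilde{K}^{\circ}$, and it is $(\mathfrak{g},\varepsilon)$-skew-primitive there by construction.

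It remains to verify the relations \eqref{rel-Htilde} for $\mathfrak{g}^{\pm1},\chi_1,\chi_2$ in the convolution algebra $\widetilde{K}^{\circ}$. The identity $\mathfrak{g}\mathfrak{g}^{-1}=\varepsilon$ is automatic; $\mathfrak{g}\chi_1=\chi_1\mathfrak{g}$ and $\mathfrak{g}\chi_2=\chi_2\mathfrak{g}+\chi_1\mathfrak{g}$ follow from a short evaluation on the basis $\{\zeta^m w_1,\zeta^m w_2\}$ of $\widetilde{K}$ in $w$-degree $1$. The remaining relations $\chi_1^2=0$ and $\chi_2^2\chi_1=\chi_1\chi_2^2+\chi_1\chi_2\chi_1$ are equalities of functionals which, being $\N_0$-homogeneous, are supported in $w$-degree $2$ and $3$ respectively, so it suffices to evaluate both sides on the finitely many PBW basis vectors of that degree, expanding the iterated coproducts of $w_1,w_2,\zeta$ via \eqref{for-copro-dual-boson} and reducing products with the commutation relations of Lemma~\ref{lem-relations-dual} (and the $\widetilde{K}$-analogue of Lemma~\ref{commutation}). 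Granting this, the presentation \eqref{rel-Htilde} provides an algebra map $\phi\colon\widetilde{H}\to\widetilde{K}^{\circ}$ with $\phi(\mathbf{g}^{\pm1})=\mathfrak{g}^{\pm1}$ and $\phi(x_i)=\chi_i$, which by the remark above is a Hopf algebra map $\widetilde{H}^{\mathrm{cop}}\to\widetilde{K}^{\circ}$; the associated $\tau$ has precisely the stated values. The main obstacle is this last verification, above all the cubic relation $\chi_2^2\chi_1=\chi_1\chi_2^2+\chi_1\chi_2\chi_1$: it is the combinatorial heart of the argument and runs parallel to \cite{ap1}, the only genuinely new feature in characteristic $2$ being the bookkeeping of the binomial-type coefficients that appear in Lemmas~\ref{commutation} and \ref{lem-relations-dual}.
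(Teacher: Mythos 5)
Your proposal is correct and follows exactly the route the paper sets up (and then omits): the Doi--Takeuchi correspondence between skew-pairings and Hopf algebra maps $\widetilde{H}^{\mathrm{cop}}\to\widetilde{K}^{\circ}$, with uniqueness from the generators of \eqref{rel-Htilde} and \eqref{rel-Ktilde} and existence reduced to checking the defining relations of $\widetilde{H}$ on the character $\mathfrak{g}$ and the $(\mathfrak{g},\varepsilon)$-derivations $\chi_i$ in the convolution algebra. The only content you defer --- the finite, degree-by-degree verification of $\chi_1^2=0$ and of the cubic relation --- is precisely the mechanical computation the paper also leaves to the reader, and your grading argument correctly shows it involves only finitely many PBW monomials.
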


Let $\widetilde{D}=A_{\sigma}=  \widetilde{H} \bowtie_{\sigma} \widetilde{K}$ be  the Hopf algebra $A = \widetilde{H} \otimes \widetilde{K}$ twisted by $\sigma$, where 
$\sigma: A \otimes A \to \ku$ associated to $\tau$ is given by:
\begin{align*}
\sigma(a \otimes b, c \otimes d)= \varepsilon(a) \varepsilon(d)\tau(c \otimes b),& & a,b,c,d \in A.
\end{align*}
The product in $\widetilde{D}$ is given by 
\begin{align*}
(a \otimes b) \cdot (c \otimes d)=a \tau(c_{(1)},b_{(1)})c_{(2)}\otimes b_{(2)}\tau^{-1}(c_{(3)}, b_{(3)})d.
\end{align*}
The proof of the next result is similar to \cite[Proposition 2.3]{ap}.

\begin{prop} \label{Dtil}
\begin{enumerate}[leftmargin=*,label=\rm{(\roman*)}]
\item The algebra $\widetilde{D}$ is presented by generators $x_1$, $x_2$, ${\bf g}^{\pm}$, $w_1$, $w_2$, $\zeta$ with relations \eqref{rel-Htilde}, \eqref{rel-Ktilde} and
\begin{align*}
&w_1x_1=x_1w_1,& & w_1x_2=x_2w_1+1 +{\bf g},& & w_1{\bf g}={\bf g}w_1,\\
& w_2x_1=x_1w+1 +{\bf g},& & w_2x_2=x_2w+{\bf g}\zeta ,& & w_2{\bf g}={\bf g}w,\\
& \zeta x_i=x_i\zeta+x_i, & & i\in I_2, & & \zeta {\bf g}= {\bf g} \zeta,
\end{align*}
where ${\bf h}=1 +{\bf g}$, $w=w_1+w_2$.
The comultiplication is given by \eqref{for-copro-boson} and \eqref{for-copro-dual-boson}.
\item The following family is a PBW-basis of $\widetilde{D}$:
\begin{align}\label{PBW_basis} \{ x_1^{l_1}x_{21}^{m_1}x_2^{m_2}{\bf g}^{n}\zeta^{m_3}w_1^{l_2}w_{21}^{m_4}w_2^{m_5}\!\!:l_i \!\in \!\I_{0,1}, i \!\in \! \I_2, m_j\!\in\! \N_{0}, j\!\in \! \I_5, n\!\in \! \mathbb{Z}\}.
\end{align}

\item There exists a Hopf algebra epimorphism $\widetilde{D} \twoheadrightarrow D(H)$.

\item $\Ss^4=\id$, where $\Ss$ is the antipode map of $\widetilde{D}$. Thus, $\Ss$ is bijective. \qed
\end{enumerate} 
\end{prop}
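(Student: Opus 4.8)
The plan is to follow the template of \cite[Proposition 2.3]{ap}. By \cite{dt}, $\widetilde{D}=\widetilde{H}\bowtie_{\sigma}\widetilde{K}$ is a Hopf algebra whose underlying coalgebra is $\widetilde{H}\otimes\widetilde{K}$, for which $h\mapsto h\bowtie 1$ and $k\mapsto 1\bowtie k$ are injective Hopf algebra morphisms and $(h\bowtie 1)(1\bowtie k)=h\bowtie k$. In particular $\widetilde{D}=\widetilde{H}\otimes\widetilde{K}$ as a vector space, so multiplying the PBW-basis of $\widetilde{H}$ from Lemma \ref{lema-Htilde} by the PBW-basis of $\widetilde{K}$ yields precisely the family \eqref{PBW_basis}; hence part (ii) will follow as soon as (i) identifies \eqref{PBW_basis} with ordered monomials in $\widetilde{D}$.

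For (i): the relations \eqref{rel-Htilde}, resp.\ \eqref{rel-Ktilde}, hold in $\widetilde{D}$ because $\widetilde{H}$, resp.\ $\widetilde{K}$, sits inside $\widetilde{D}$ as a Hopf subalgebra, and the coproduct is inherited from \eqref{for-copro-boson} and \eqref{for-copro-dual-boson}. The six cross relations are obtained by plugging $a=d=1$, $b\in\{w_1,w_2,\zeta\}$, $c\in\{x_1,x_2,{\bf g}^{\pm}\}$ into the product rule $(a\otimes b)(c\otimes d)=a\,\tau(c_{(1)},b_{(1)})\,c_{(2)}\otimes b_{(2)}\,\tau^{-1}(c_{(3)},b_{(3)})\,d$, using $\Delta(x_i)=x_i\otimes 1+{\bf g}\otimes x_i$, the coproducts of $w_1,w_2,\zeta$, the prescribed values of $\tau$, and $\tau^{-1}(h,k)=\tau(\Ss(h),k)$ with $\Ss(x_i)={\bf g}^{-1}x_i$. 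Conversely, let $A$ be the algebra presented by these generators and relations; since the relations hold in $\widetilde{D}$ there is an algebra epimorphism $A\twoheadrightarrow\widetilde{D}$. Using the cross relations together with the commutation identities of Lemmas \ref{commutation} and \ref{lem-relations-dual}, one shows by induction on length that $A$ is spanned by the ordered monomials \eqref{PBW_basis}; as their images in $\widetilde{D}$ are linearly independent, $A\to\widetilde{D}$ is an isomorphism, which proves (i) and simultaneously that \eqref{PBW_basis} is a basis, i.e.\ (ii).

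For (iii): by (i) the assignment $x_i\mapsto x_i$, $w_i\mapsto w_i$, ${\bf g}^{\pm}\mapsto g$, $\zeta\mapsto\gamma$ extends to an algebra map $\widetilde{D}\to D(H)$ provided each defining relation of $\widetilde{D}$ is carried to a valid relation in $D(H)$; relations \eqref{rel-Htilde} hold in $H\subseteq D(H)$, relations \eqref{rel-Ktilde} hold in $K=H^{\ast\,\mathrm{op}}\subseteq D(H)$, and the six cross relations coincide with those of Proposition \ref{prop:double} under this substitution (recall $g^2=1$, $\gamma^2=\gamma$ in $D(H)$). The resulting map is a bialgebra, hence Hopf algebra, map because the coproducts are given by the same formulas, and it is surjective because its image contains $x_1,x_2,g,w_1,w_2,\gamma$, a set of algebra generators of $D(H)$.

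For (iv): $\Ss$ is an algebra anti-automorphism, hence $\Ss^2$ is an algebra automorphism and it suffices to evaluate it on generators. A short computation from the coproducts yields $\Ss({\bf g}^{\pm 1})={\bf g}^{\mp 1}$, $\Ss(x_i)={\bf g}^{-1}x_i$, $\Ss(\zeta)=\zeta$, $\Ss(w_1)=w_1$, $\Ss(w_2)=w_2+\zeta w_1$; applying $\Ss$ once more and using \eqref{rel-Ktilde} one finds $\Ss^2(h)={\bf g}^{-1}h\,{\bf g}$ for every generator $h$, so $\Ss^2=\operatorname{Ad}_{{\bf g}^{-1}}$ on all of $\widetilde{D}$. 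One then checks that ${\bf g}^2$ is central: it commutes with $x_1$ and $w_1$ by \eqref{rel-Htilde}, \eqref{rel-Ktilde}, with $x_2$ and $x_{21}$ by \eqref{eq5} and \eqref{eq4}, with $\zeta$ by $\zeta{\bf g}={\bf g}\zeta$, and with $w_2$ by a one-line calculation from $w_2{\bf g}={\bf g}(w_1+w_2)$. Hence $\Ss^4=\operatorname{Ad}_{{\bf g}^{-1}}\circ\operatorname{Ad}_{{\bf g}^{-1}}=\operatorname{Ad}_{{\bf g}^{-2}}=\id$, and therefore $\Ss$ is bijective with $\Ss^{-1}=\Ss^3$. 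The only step that is not routine bookkeeping is the spanning argument in (i): one must verify that the six cross relations, together with the relations internal to $\widetilde{H}$ and $\widetilde{K}$, genuinely suffice to move every letter $w_i$ or $\zeta$ past every letter $x_j$ or ${\bf g}^{\pm}$ and to reorder the outcome into the form \eqref{PBW_basis}. This is exactly where Lemmas \ref{commutation} and \ref{lem-relations-dual} enter, the even and odd powers of $x_2$ and $w_2$ interacting differently with ${\bf g}$ and $\zeta$ respectively, and it requires a careful induction on total degree. The remaining details run parallel to \cite[\S 2]{ap}.
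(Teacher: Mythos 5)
Your proposal is correct and follows exactly the route the paper intends: the published proof simply says the argument is "similar to \cite[Proposition 2.3]{ap}", and what you have written out — the cross relations from the Doi--Takeuchi product formula, the PBW basis from the vector-space decomposition $\widetilde{H}\otimes\widetilde{K}$, the epimorphism onto $D(H)$ by checking relations, and $\Ss^2=\operatorname{Ad}_{{\bf g}^{-1}}$ together with the centrality of ${\bf g}^2$ — is precisely the content of that template. No gaps; the details you flag as requiring care (the spanning argument via Lemmas \ref{commutation} and \ref{lem-relations-dual}) are handled correctly.
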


\begin{lemma}\label{commutationDtil}
The relations \eqref{eq-1}, \dots, \eqref{eq5}, \eqref{eq6}, \eqref{eq7}, and 
\begin{align*}
&\zeta x_{21}=x_{21} \zeta,& &w_1x_{21}=x_{21}w_1,& &w_{21}x_1=x_1w_{21},& &w_{21}{\bf g}={\bf g}w_{21},&
\end{align*}
are valid in $\widetilde{D}$. Also, taking ${\bf h}= 1+{\bf g}$ and $w=w_1+w_2$, the following commutation relations hold in  $\widetilde{D}$, for all $m,n \in \N_0$:
\begin{align*}
&\zeta^nx_1=x_1\xi^n,  & &  \zeta^m x_2^{n}=x_2^{n}\sum_{j=0}^{m}{{m}\choose{j}}\xi^j,\\
& w_1x_2^{2n}=x_2^{2n}w_1+nx_1x_2^{2n-2}{\bf g}, & & w_1x_2^{2n+1}=x_2^{2n+1}w_1+x_2^{2n}{\bf h}+nx_1x_2^{2n-1}{\bf g},
\end{align*}
\vspace{-0.7cm}
\begin{align*}
& w_{21}x_2^{2n}= x_2^{2n}w_{21}+nx_2^{2n-2}(x_1{\bf g}w_1+{\bf h}^2),\\
&w_{21}x_2^{2n+1}=x_2^{2n+1}w_{21}+x_2^{2n}{\bf h}w_1
+n(x_2^{2n-1}{\bf h}^2+x_1(x_2^{2n-1}{\bf g}w_1+x_2^{2n-2}{\bf g}{\bf h})),
\end{align*}
\vspace{-0.7cm}
\begin{align*}
&w_{21}^{2n}x_2= x_2w_{21}^{2n},& &w_{21}^{2n+1}x_2= x_2w_{21}^{2n+1}+{\bf h}w_1w_{21}^{2n}\\
&w_2^{2n}x_1= x_1w^{2n}\!+\!nw_1w_2^{2n-2}, & &w_2^{2n+1}x_1= x_1w^{2n+1}\!\!+\!(n+{\bf h})w^{2n}\!+\!nww_2^{2n-1},\\
&w_2x_{21}^{2m}=x_{21}^{2m}w_2,& &
w_2x_{21}^{2m+1}=x_{21}^{2m+1}w_2+x_{21}^{2m}{\bf h}x_1,
\end{align*}
\vspace{-0.7cm}
\begin{align*}
&w_2^{2n}x_{21}= x_{21}w_2^{2n}+n(x_1w_1+{\bf h}^2)w_2^{2n-2},\\ &w_2^{2n+1}x_{21}=x_{21}w_2^{2n+1}+x_1{\bf h} w_2^{2n}
+n(x_1(w_1w_2+w_{21})+{\bf h}w_1+{\bf h}^2w_2)w_2^{2n-1},\\
&w_2x_2^{2n}=x_2^{2n}w_2+n(x_2^{2n-1}+x_2^{2n-2}x_1{\bf g}\xi ),\\ &w_2x_2^{2n+1}=x_2^{2n+1}w+x_2^{2m}{\bf g}\zeta+n(x_2^{2m}+x_1x_2^{2m-1}{\bf g}\zeta),\\
& w_2^{2n}x_2=x_2w_2^{2n}+n(x_2w_{21}+{\bf g}w)w_2^{2n-2},\\ &w_2^{2n+1}x_2=x_2w_2^{2n+1}\!\!+\!(x_2w_1\!+{\bf g}\zeta)w_2^{2n}\!\!
+\!n({\bf g}\xi w_{21}w_2^{2n-2}\!\!+\!x_2w_{21}w_2^{2n-1}\!\!+\!{\bf g}w_2^{2n}),\\
&w_2^m {\bf g}^{2n}={\bf g}^{2n}w_2^m, \qquad \qquad \qquad w_2^m {\bf g}^{2n+1}= {\bf g}^{2n+1}w^m.
\end{align*}
\end{lemma}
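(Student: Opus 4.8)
The plan is to derive every identity from the presentation of $\widetilde{D}$ given in Proposition \ref{Dtil}\,(i), ordering the arguments so that each relation invokes only the defining relations together with relations established before it. First, since the PBW-basis \eqref{PBW_basis} of $\widetilde{D}$ restricts to the PBW-bases of the subalgebras $\widetilde{H}$ (Lemma \ref{lema-Htilde}) and $\widetilde{K}$, the canonical algebra maps $\widetilde{H}\to\widetilde{D}$ and $\widetilde{K}\to\widetilde{D}$ are injective; hence relations \eqref{eq-1}--\eqref{eq5}, valid in $\widetilde{\toba}\subseteq\widetilde{H}$ by Lemma \ref{commutation}, and relations \eqref{eq6}, \eqref{eq7}, valid in $\widetilde{K}$ by Lemma \ref{lem-relations-dual}, hold in $\widetilde{D}$ as stated; likewise the images of \eqref{eq1}, \eqref{eq2}, \eqref{eq3} in $\widetilde{K}$ are available because $\widetilde{\mathfrak{B}}\simeq\widetilde{\toba}$ as algebras.

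Next I would settle the four ``short'' relations directly. From $\zeta x_i=x_i\zeta+x_i$ one gets $(\zeta+1)x_i=x_i\zeta$ in characteristic $2$, hence $\zeta x_ix_j=x_ix_j\zeta$ for $i,j\in\I_2$ and therefore $\zeta x_{21}=x_{21}\zeta$. For $w_1x_{21}=x_{21}w_1$ one expands $x_{21}=x_1x_2+x_2x_1$ and uses $w_1x_1=x_1w_1$, $w_1x_2=x_2w_1+1+\mathbf{g}$ and $\mathbf{g}x_1=x_1\mathbf{g}$: the two copies of the cross term $x_1(1+\mathbf{g})$ cancel. The relations $w_{21}x_1=x_1w_{21}$ and $w_{21}\mathbf{g}=\mathbf{g}w_{21}$ follow in the same fashion, expanding $w_{21}=w_1w_2+w_2w_1$ and using $w_2x_1=x_1(w_1+w_2)+1+\mathbf{g}$, $w_2\mathbf{g}=\mathbf{g}(w_1+w_2)$, $w_1\mathbf{g}=\mathbf{g}w_1$, $w_1^2=0$ and the identity $w_1(w_1+w_2)+(w_1+w_2)w_1=w_{21}$.

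Finally, each relation in the long list is proved by induction on its exponent, carrying the even- and odd-power versions (and, where relevant, the powers of $x_{21}$ or $w_{21}$) simultaneously in one induction; the base case is trivial or one of the defining relations. In the inductive step one strips off one generator on the left, e.g.\ $w_2^{2n+2}x_{21}=w_2^{2}\bigl(w_2^{2n}x_{21}\bigr)$ or $w_2x_2^{2n+2}=\bigl(w_2x_2^{2n}\bigr)x_2^{2}$, inserts the inductive hypothesis, and pushes the surviving $w$'s past the $x$'s using the already-proved identities and \eqref{eq-1}--\eqref{eq7}; the numerous cross terms produced by $w_ix_j=x_jw_i+(\text{lower order})$ then collapse via the characteristic-$2$ cancellations and the relations $x_1^2=w_1^2=0$. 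A workable dependency order is: the $\zeta$-relations first (they use only $\zeta x_i=x_i\zeta+x_i$ and $\zeta x_{21}=x_{21}\zeta$); then $w_1x_2^n$ and $w_{21}x_2^n$; then $w_{21}^nx_2$; then $w_2x_1$, $w_2x_{21}^m$, $w_2^nx_1$, $w_2^nx_{21}$; then $w_2x_2^n$ and $w_2^nx_2$ (which depend on the preceding ones); and finally $w_2^m\mathbf{g}^n$ (which needs only $w_2\mathbf{g}=\mathbf{g}(w_1+w_2)$, $\mathbf{g}x_i$ and \eqref{eq5}). The main obstacle is purely bookkeeping: there is no single decisive idea, and the real work is to fix the order of the roughly twenty identities so that each inductive step appeals exclusively to relations already proved, and to keep track of the sign-free cancellations of the cross terms.
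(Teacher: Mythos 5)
Your proposal is correct and matches the paper's approach: the paper's proof of Lemma \ref{commutationDtil} is simply ``It follows from Proposition \ref{Dtil} and induction on $m,n$,'' and your plan is exactly that argument spelled out, with the embedding of $\widetilde{H}$ and $\widetilde{K}$ into $\widetilde{D}$ justifying the transfer of \eqref{eq-1}--\eqref{eq7}, direct verification of the four short relations, and inductions on the exponents organized so that each step uses only previously established identities. No substantive difference from the paper.
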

\begin{proof}
It follows from Proposition \ref{Dtil} and induction on $m,n$. 
\end{proof}

\subsection{\texorpdfstring{$\widetilde{D}$}{} as an extension}
Consider the algebraic group $${\bf B}\coloneqq  ((\mathbf{G}_a \times \mathbf{G}_a) \rtimes  \mathbf{G}_m) \times {\bf H}_3,$$
where $(\mathbf{G}_a \times \mathbf{G}_a) \rtimes  \mathbf{G}_m$  is the semidirect product with action of  $\mathbf{G}_m$ on $\mathbf{G}_a \times \mathbf{G}_a$ given by $\lambda \cdot (t_1, t_2) = (\lambda^2 t_1, \lambda^2 t_2)$, $\lambda \in \ku^{\times}, t_1,t_2 \in \ku$, and $\bf{H}_3$ is the Heisenberg group  of upper triangular matrices with ones in the diagonal.

\begin{prop} \label{middlecolumn}
Let $N$ be the subalgebra of $\widetilde{D}$ generated by $x_2^4$, $x_{21}^{2}$, ${\bf g}^2$, $w_2^4$, $w_{21}^2$ and $\zeta^{(2)}=\zeta^2+\zeta$. Then
\begin{enumerate}[leftmargin=*,label=\rm{(\roman*)}]
\item $N$ is a normal commutative Hopf subalgebra of $\widetilde{D}$.
\item $\widetilde{D}$ is a finitely generated free $N$-module.
\item  $N \overset{\iota}\hookrightarrow \widetilde{D} \overset{\pi}\twoheadrightarrow D(H)$ is a short exact sequence of Hopf algebras.
\item $N \simeq \ku[T^{\pm}] \otimes \ku[X_1, \cdots, X_5]$ as an algebra.
\item $N \simeq \cO({\bf{B}})$ as Hopf algebras.
\end{enumerate}
\end{prop}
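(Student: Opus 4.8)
The plan is to establish the five items roughly in the order (i), then (iv) and (ii) together, then (iii), then (v), since each leans on the previous; throughout write $T={\bf g}^2$ and $X_1=x_2^4$, $X_2=x_{21}^2$, $X_3=w_2^4$, $X_4=w_{21}^2$, $X_5=\zeta^{(2)}$ for the six algebra generators of $N$. For (i) the first task is to check that these six elements commute with one another: every identity needed is listed in, or is an immediate consequence of, the commutation relations of Lemmas \ref{commutation} and \ref{commutationDtil}, the key mechanism being that each ``error term'' there carries an integer coefficient that becomes $0$ in characteristic $2$ once one passes to the relevant powers (for instance ${\bf g}^{2m}$ against $x_2^n$, or $w_2^{2n}$ against $x_2^{2m}$, or $(x_1+x_2)^4=x_2^4$). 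The second task is to compute the comultiplications of the generators from \eqref{for-copro-boson}, \eqref{for-copro-dual-boson} and the same relations, expanding $\Delta(x_2^4)=\Delta(x_2)^4$ and the like in $\widetilde{D}\otimes\widetilde{D}$; the expected output is
\begin{align*}
&\Delta(T)=T\otimes T, & &\Delta(X_1)=X_1\otimes 1+T^2\otimes X_1, & &\Delta(X_2)=X_2\otimes 1+T^2\otimes X_2,\\
&\Delta(X_5)=X_5\otimes 1+1\otimes X_5, & &\Delta(X_4)=X_4\otimes 1+1\otimes X_4, & &\Delta(X_3)=X_3\otimes 1+1\otimes X_3+X_5\otimes X_4.
\end{align*}
Hence $\Delta(N)\subseteq N\otimes N$, and the antipode axiom applied to these forces $\Ss(X_i)\in N$ (e.g. $\Ss(X_1)=T^{-2}X_1$) and $\Ss(T^{\pm})=T^{\mp}$, so $N$ is a commutative Hopf subalgebra. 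For normality one applies the left adjoint action of each algebra generator $x_1,x_2,{\bf g}^{\pm},w_1,w_2,\zeta$ of $\widetilde{D}$ to each of $T^{\pm},X_1,\dots,X_5$ and simplifies with the same relations; the result always lands in $N$, and since these elements generate $\widetilde{D}$ this shows $N$ is stable under the adjoint action. The bookkeeping for the four coproducts $\Delta(x_2^4),\Delta(x_{21}^2),\Delta(w_2^4),\Delta(w_{21}^2)$ and for the pairwise commutativity is where essentially all the characteristic-$2$ combinatorics sits, and this is the step I expect to be the main obstacle; it is modeled on \cite[\S 2]{ap} and \cite[\S 4]{ap1}.

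For (iv) and (ii) the tool is the PBW basis \eqref{PBW_basis} of $\widetilde{D}$. Dividing each of the exponents $m_1,m_2,n,m_4,m_5$ there by its period ($2,4,2,2,4$ respectively) and using that $\ku[\zeta]$ is a free $\ku[\zeta^{(2)}]$-module on $\{1,\zeta\}$, one rewrites every PBW monomial as an element of $N$ times one of the $2^{10}$ reduced monomials $x_1^{l_1}x_{21}^{r_1}x_2^{r_2}{\bf g}^{r_3}\zeta^{r_4}w_1^{l_2}w_{21}^{r_5}w_2^{r_6}$ with $l_i,r_1,r_3,r_4,r_5\in\I_{0,1}$ and $r_2,r_6\in\I_{0,3}$, using the commutation relations of Lemma \ref{commutationDtil} to move the ``period parts'' to the left together with an induction on degree to absorb lower-order corrections. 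These reduced monomials therefore span $\widetilde{D}$ as a left $N$-module, and comparing leading terms against the PBW basis \eqref{PBW_basis} shows they are $N$-linearly independent and that $T^{\pm},X_1,\dots,X_5$ are algebraically independent. This proves (ii) and identifies $N\simeq\ku[T^{\pm}]\otimes\ku[X_1,\dots,X_5]$ as algebras, which is (iv).

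For (iii): by Proposition \ref{Dtil}\,(i) and Proposition \ref{prop:double}, $D(H)$ is the quotient of $\widetilde{D}$ by the relations ${\bf g}^2=1$, $x_2^4=0$, $x_{21}^2=0$, $w_2^4=0$, $w_{21}^2=0$, $\zeta^{(2)}=0$, that is, by the two-sided ideal generated by $N^+$. By normality $\widetilde{D}N^+=N^+\widetilde{D}$ is a Hopf ideal, so $\pi$ factors as $\widetilde{D}\twoheadrightarrow\widetilde{D}/\widetilde{D}N^+\twoheadrightarrow D(H)$; by (ii) the middle term has dimension $\dim_\ku(\widetilde{D}\otimes_N\ku)=2^{10}=\dim D(H)$, so the second map is an isomorphism and $\ker\pi=\widetilde{D}N^+$. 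Since $\iota$ is injective and, by (ii), faithfully flat, $N$ is stable under the adjoint action by (i), $\pi$ is surjective, and $\ker\pi=\widetilde{D}N^+$, Remark \ref{exactseq} yields that $N\overset{\iota}\hookrightarrow\widetilde{D}\overset{\pi}\twoheadrightarrow D(H)$ is a short exact sequence of Hopf algebras.

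For (v): from the coproduct formulas in (i), $N$ is the tensor product of the Hopf subalgebras $N_1=\ku[T^{\pm},X_1,X_2]$ and $N_2=\ku[X_3,X_4,X_5]$, since $\Delta$ carries $N_1$ into $N_1\otimes N_1$ and $N_2$ into $N_2\otimes N_2$. In $N_1$, $T$ is group-like and $\Delta(X_i)=X_i\otimes 1+T^2\otimes X_i$ for $i=1,2$, which is exactly the coordinate Hopf algebra of $(\mathbf{G}_a\times\mathbf{G}_a)\rtimes\mathbf{G}_m$ for the action $\lambda\cdot(t_1,t_2)=(\lambda^2t_1,\lambda^2t_2)$. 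In $N_2$, $X_5$ and $X_4$ are primitive and $\Delta(X_3)=X_3\otimes 1+1\otimes X_3+X_5\otimes X_4$, which is exactly $\cO({\bf H}_3)$ with $X_5,X_4,X_3$ the three off-diagonal matrix coordinates. Hence $N\simeq\cO((\mathbf{G}_a\times\mathbf{G}_a)\rtimes\mathbf{G}_m)\otimes\cO({\bf H}_3)=\cO({\bf B})$, completing the proof.
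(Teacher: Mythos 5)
Your proposal is correct and follows essentially the same route as the paper, whose proof simply invokes Propositions \ref{prop:double} and \ref{Dtil}, Lemma \ref{commutationDtil} and the template of \cite[Proposition 2.6]{ap}; your outline is in fact more detailed than the printed argument. The coproduct formulas you flag as the main bookkeeping obstacle do come out exactly as you predict (e.g. $\Delta(w_2^4)=w_2^4\otimes 1+1\otimes w_2^4+\zeta^{(2)}\otimes w_{21}^2$, with the cross terms cancelling in characteristic $2$ via \eqref{eq-1}--\eqref{eq7}), so the identification with $\cO(\mathbf{B})$ goes through as stated.
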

\pf Using  Propositions \ref{prop:double} and \ref{Dtil},  Lemma \ref{commutationDtil} and following the same steps as in the proof of \cite[Proposition 2.6]{ap}, we obtain the result.
\epf

\subsection{Another extension}
Let $\pi:  \widetilde{D} \to U(\mfG)$ be  the Hopf algebra map given by 
\begin{align*}
\pi(x_1)&=0, &\pi(x_2)&=a, &\pi({\bf g})&=1,& \pi(w_1)&=0,& \pi(w_2)&=b, & \pi(\zeta)&=c.
\end{align*}
 Consider the subalgebra  $C$  of $ \widetilde{D}$ generated by $x_1, x_{21}, {\bf g}, w_1$ and $w_{21}$. By Lemma \ref{commutationDtil}, $C$ is a normal Hopf subalgebra of $\widetilde{D}$.

\medbreak
For our next statement we need to introduce some more notations. Let ${\mathfrak G}$ be the  
group scheme such that its algebra of functions is
the commutative Hopf algebra $\cO(\mathfrak G) \coloneqq  \ku[X_1,X_2, T^{\pm}] \otimes \Lambda(Y_1,Y_2)$ with comultiplication
\begin{align*}
&\Delta(X_1)=X_1\otimes 1 + T^2 \otimes X_1+Y_1T\otimes Y_1, & & \Delta(Y_1)=Y_1\otimes 1 + T \otimes Y_1,\\
&\Delta(X_2)=X_2\otimes 1 + 1 \otimes X_2+Y_2 \otimes Y_2, & & \Delta(Y_2)=Y_2\otimes 1 + 1 \otimes Y_2,\\
&\Delta(T)=T\otimes T.
\end{align*}

\begin{prop} \label{middlerow}
\emph{(i)} There is an isomorphism $C \simeq \cO(\mathfrak G)$ of Hopf algebras.

\emph{(ii)}  The sequence of Hopf algebra maps $ \cO(\mathfrak G) \overset{\iota}\hookrightarrow \widetilde{D} \overset{\pi}\twoheadrightarrow U(\mfG)$ is exact.
\end{prop}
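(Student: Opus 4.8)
The plan is to establish (i) by exhibiting an explicit Hopf algebra isomorphism $\varrho\colon\cO(\mathfrak G)\to C$, and then to deduce (ii) from the faithful flatness criterion recorded in Remark \ref{exactseq}.

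For (i), I would first note that, by Lemma \ref{commutationDtil}, the generators $x_1,x_{21},\mathbf{g},w_1,w_{21}$ of $C$ commute pairwise, and that $x_1^2=w_1^2=0$ and $\mathbf{g}^{\pm}\mathbf{g}^{\mp}=1$ hold in $\widetilde{D}$; hence there is a surjective algebra map
\[\varrho\colon\cO(\mathfrak G)=\ku[X_1,X_2,T^{\pm}]\otimes\Lambda(Y_1,Y_2)\longrightarrow C\]
with $\varrho(X_1)=x_{21}$, $\varrho(X_2)=w_{21}$, $\varrho(T)=\mathbf{g}$, $\varrho(Y_1)=x_1$, $\varrho(Y_2)=w_1$. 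The PBW basis \eqref{PBW_basis} of $\widetilde{D}$ shows that $\{x_1^{l_1}x_{21}^{m_1}\mathbf{g}^{n}w_1^{l_2}w_{21}^{m_4}:l_i\in\I_{0,1},\,m_1,m_4\in\N_0,\,n\in\Z\}$ is a basis of $C$, and it corresponds bijectively to the standard basis of $\cO(\mathfrak G)$; so $\varrho$ is an algebra isomorphism. It remains to check that $\varrho$ respects the comultiplication. Using \eqref{for-copro-boson}, \eqref{for-copro-dual-boson}, the defining relations of $\widetilde{D}$ and $\car\ku=2$, I would compute
\begin{align*}
&\Delta(\mathbf{g})=\mathbf{g}\otimes\mathbf{g},\qquad\Delta(x_1)=x_1\otimes 1+\mathbf{g}\otimes x_1,\qquad\Delta(w_1)=w_1\otimes 1+1\otimes w_1,\\
&\Delta(x_{21})=x_{21}\otimes 1+\mathbf{g}^2\otimes x_{21}+x_1\mathbf{g}\otimes x_1,\qquad\Delta(w_{21})=w_{21}\otimes 1+1\otimes w_{21}+w_1\otimes w_1,
\end{align*}
where the last two follow after the cancellations $\mathbf{g}(x_1+x_2)+\mathbf{g}x_2=\mathbf{g}x_1$, $2x_1\mathbf{g}=0$ and $\zeta w_1+w_1\zeta=w_1$ (the latter from \eqref{rel-Ktilde}). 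These match the comultiplication of $\cO(\mathfrak G)$ termwise; compatibility with counit and antipode is then automatic, so $\varrho$ is an isomorphism of Hopf algebras.

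For (ii), I would first verify that $\pi$ is a well-defined Hopf algebra epimorphism: each relation of Proposition \ref{Dtil}(i) is sent to a valid identity in $U(\mfG)$ — for instance $w_2x_2=x_2w+\mathbf{g}\zeta\mapsto ba=ab+c$ and $\zeta x_i=x_i\zeta+x_i\mapsto[c,a]=a,\ [c,b]=b$, all consequences of \eqref{lie-bracket} in characteristic $2$; compatibility with $\Delta$ is immediate from \eqref{for-copro-boson}, \eqref{for-copro-dual-boson}; and $\pi$ is onto because $a,b,c$ lie in its image. Next, rearranging the PBW basis \eqref{PBW_basis} by means of the commutation relations of Lemma \ref{commutationDtil} exhibits $\widetilde{D}$ as a free left $C$-module with basis $\{x_2^{m_2}\zeta^{m_3}w_2^{m_5}:m_2,m_3,m_5\in\N_0\}$, so $\iota\colon C\hookrightarrow\widetilde{D}$ is faithfully flat. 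Since $C$ is a normal Hopf subalgebra of $\widetilde{D}$, Remark \ref{exactseq} gives that $C\hookrightarrow\widetilde{D}\twoheadrightarrow\widetilde{D}/\widetilde{D}C^{+}$ is exact. Finally, $\pi$ kills $x_1$, $x_{21}$, $\mathbf{g}-1$, $w_1$ and $w_{21}$, so $C^{+}\subseteq\ker\pi$ and $\pi$ induces a surjection $\overline{\pi}\colon\widetilde{D}/\widetilde{D}C^{+}\twoheadrightarrow U(\mfG)$; comparing the basis $\{x_2^{m_2}\zeta^{m_3}w_2^{m_5}\}$ of the source with the PBW basis $\{a^{m_2}c^{m_3}b^{m_5}\}$ of $U(\mfG)$ shows $\overline{\pi}$ is bijective, whence $\ker\pi=\widetilde{D}C^{+}$. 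Transporting (i) and the identification $\widetilde{D}/\widetilde{D}C^{+}\simeq U(\mfG)$ into the exact sequence above yields (ii).

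The step I expect to be the main obstacle is Part (i): because $\mathbf{g}$ does not commute with $x_2$ (nor $\zeta$ with $w_1$), the coproducts of $x_{21}$ and $w_{21}$ must be computed directly from the definitions, and one must also verify carefully that the monomials of \eqref{PBW_basis} genuinely split $\widetilde{D}$ as a free $C$-module — a triangularity argument on the straightening rules of Lemma \ref{commutationDtil}. The remaining verifications are formal and parallel those in \cite{ap}.
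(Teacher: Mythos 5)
Your proposal is correct and follows essentially the same route as the paper, whose proof is simply a reference to the analogous argument in \cite[Proposition 2.7]{ap}: identify $C$ with $\cO(\mathfrak G)$ by computing $\Delta(x_{21})$ and $\Delta(w_{21})$ and matching PBW bases, then verify the hypotheses of Remark \ref{exactseq} (normality, faithful flatness of $\widetilde{D}$ over $C$ from the PBW basis, and $\ker\pi=\widetilde{D}C^{+}$ via the induced isomorphism onto $U(\mfG)$). Your explicit coproduct formulas and the identification of $\widetilde{D}/\widetilde{D}C^{+}$ with $U(\mfG)$ check out, so the write-up supplies exactly the details the paper's citation leaves implicit.
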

\pf Similar to the proof of \cite[Proposition 2.7]{ap}.
\epf

\subsection{A commutative square} 
Let $\mathbf{G}\coloneqq (\mathbf{G}_a \times \mathbf{G}_a )\rtimes \mathbf{G}_m$ be the semidirect
product with action of $\mathbf{G}_m$ on $\mathbf{G}_a \times \mathbf{G}_a$ given by $\lambda \cdot (t_1, t_2) = (\lambda^2 t_1, t_2)$,  $\lambda \in \ku^{\times}$, $t_1, t_2 \in \ku$.
Then  $\cO(\mathbf{G})$ is isomorphic to $A\coloneqq \ku[X_1,X_2,T^{\pm}]$, where $T \in G(A)$, $X_1 \in \mathcal P_{1,T^2}(A)$, $X_2 \in \mathcal P(A)$.\medbreak

It is straightforward to check the following.

\begin{prop}\label{leftcolumn} There is a short exact sequence of Hopf algebras $$\cO(\mathbf{G}) \overset{\iota}\hookrightarrow  \cO(\mathfrak{G}) \overset{\pi}\twoheadrightarrow \nucleo$$
where the Hopf maps $\iota$ and $\pi$ are given by
\begin{align*}
\iota(X_1) &= X_1^2, &  & \iota(X_2)=X_2^2, & & \iota(T)= T^2,&&\\
\pi(X_1)= x_{21}, \quad \pi(X_2) &= w_{21}, & & \pi(T)=g, & & \pi(Y_1)=x_1, & & \pi(Y_2)=w_1. \qed
\end{align*} 
\end{prop}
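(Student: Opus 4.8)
The plan is to verify that $\iota$ and $\pi$ are well-defined morphisms of Hopf algebras, that $\iota$ is injective and faithfully flat with $\iota(\cO(\mathbf{G}))$ stable under the left adjoint action of $\cO(\mathfrak{G})$, and that $\ker\pi=\cO(\mathfrak{G})\,\iota(\cO(\mathbf{G}))^{+}$; exactness will then follow from Remark \ref{exactseq}.

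First I would deal with $\iota$. Since $\cO(\mathfrak{G})$ is commutative and $X_1^2,X_2^2,T^{\pm2}$ are algebraically independent with $T^2$ invertible, $\iota$ extends to an algebra map; it is injective because it identifies $\cO(\mathbf{G})$ with the subalgebra $\ku[X_1^2,X_2^2,T^{\pm2}]\subseteq\cO(\mathfrak{G})$. Compatibility with the coproduct is immediate in characteristic $2$: squaring is additive on the commutative algebra $\cO(\mathfrak{G})\otimes\cO(\mathfrak{G})$ and $Y_1^2=Y_2^2=0$, so $\Delta(X_1^2)=X_1^2\otimes1+T^4\otimes X_1^2$, $\Delta(X_2^2)=X_2^2\otimes1+1\otimes X_2^2$ and $\Delta(T^2)=T^2\otimes T^2$, which is precisely $(\iota\otimes\iota)\Delta$ applied to the skew-primitive, primitive and grouplike generators of $\cO(\mathbf{G})$; the counit is clearly preserved. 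For $\pi$, the defining relations of $\cO(\mathfrak{G})$ (commutativity, invertibility of $T$, $Y_1^2=Y_2^2=0$) are sent by the prescribed assignment to relations valid in $\nucleo$ by \eqref{eq:sequence} and commutativity of $\nucleo$, so $\pi$ is a well-defined algebra map, and it is surjective because its image contains the algebra generators of $\nucleo$. That $\pi$ is a coalgebra map is checked on generators: from \eqref{for-copro-boson}, \eqref{for-copro-dual-boson} and the commutation rules one computes in $D(H)$ that $\Delta(x_1)=x_1\otimes1+g\otimes x_1$, $\Delta(w_1)=w_1\otimes1+1\otimes w_1$, $\Delta(g)=g\otimes g$, $\Delta(x_{21})=x_{21}\otimes1+1\otimes x_{21}+x_1g\otimes x_1$ and $\Delta(w_{21})=w_{21}\otimes1+1\otimes w_{21}+w_1\otimes w_1$, where the cross terms $x_1g\otimes x_1$ and $w_1\otimes w_1$ arise from the characteristic-$2$ identities $gx_1+x_1g=0$, $gx_2+x_2g=x_1g$ of \eqref{rel-boson} and $w_1\gamma+\gamma w_1=w_1$ of \eqref{gen-dual-boson} (using also $g^2=1$); this matches $(\pi\otimes\pi)\Delta$ on $X_1,X_2,T,Y_1,Y_2$. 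Being an algebra and coalgebra map between Hopf algebras, $\iota$ and $\pi$ are morphisms of Hopf algebras.

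Next, $\cO(\mathfrak{G})$ is free as a module over $\iota(\cO(\mathbf{G}))\simeq\ku[X_1^2,X_2^2,T^{\pm2}]$, with basis $\{X_1^iX_2^jT^kY_1^lY_2^m:i,j,k,l,m\in\{0,1\}\}$, hence of rank $2^5$; in particular $\iota$ is faithfully flat. Since $\cO(\mathfrak{G})$ is commutative its left adjoint action is given by the counit, so $\iota(\cO(\mathbf{G}))$ is automatically stable under it. Finally, $\cO(\mathfrak{G})\,\iota(\cO(\mathbf{G}))^{+}$ is the ideal $(X_1^2,X_2^2,T^2-1)$ of $\cO(\mathfrak{G})$, so
\[
\cO(\mathfrak{G})/\cO(\mathfrak{G})\,\iota(\cO(\mathbf{G}))^{+}\simeq\ku[X_1]/(X_1^2)\otimes\ku[X_2]/(X_2^2)\otimes\ku[T^{\pm}]/(T^2-1)\otimes\Lambda(Y_1,Y_2)
\]
has dimension $2^5=\dim\nucleo$ by Theorem \ref{teo:sequence}. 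As $\pi(X_1^2)=x_{21}^2=0$, $\pi(X_2^2)=w_{21}^2=0$ and $\pi(T^2-1)=g^2-1=0$, the epimorphism $\pi$ factors through this quotient, and the induced surjection between two spaces of equal finite dimension is an isomorphism; hence $\ker\pi=\cO(\mathfrak{G})\,\iota(\cO(\mathbf{G}))^{+}$, and Remark \ref{exactseq} gives exactness. The one mildly delicate point I anticipate is the characteristic-$2$ bookkeeping of the cross terms in $\Delta(x_{21})$ and $\Delta(w_{21})$; everything else is formal.
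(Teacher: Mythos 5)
Your proof is correct and follows the route the paper clearly intends (the paper itself only says the verification is straightforward): you check that $\iota$ and $\pi$ are Hopf algebra maps, that $\iota$ is a faithfully flat inclusion onto $\ku[X_1^2,X_2^2,T^{\pm 2}]$, and that $\ker\pi=\cO(\mathfrak{G})\iota(\cO(\mathbf{G}))^{+}$ by the dimension count $2^5=\dim\nucleo$, then invoke Remark \ref{exactseq}. The characteristic-$2$ computations of $\Delta(x_{21})$ and $\Delta(w_{21})$, which are the only nontrivial points, are carried out correctly.
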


We can summarize the relations between the Hopf algebras studied in the previous sections in the following diagram:

\begin{align}\label{squarem}
\begin{aligned}\xymatrix{ \cO(\mathbf{G}) \ar@{^{(}->}[r] \ar@{^{(}->}[d] & \cO({\bf B}) \ar@{->>}[r] \ar@{^{(}->}[d]& \cO({\bf G}_a^3) \ar@{^{(}->}[d]\\
\cO(\mathfrak{G}) \ar@{^{(}->}[r] \ar@{->>}[d] &   \widetilde{D} \ar@{->>}[r] \ar@{->>}[d] & U(\mfG) \ar@{->>}[d]\\
\nucleo\ar@{^{(}->}[r] &  D(H) \ar@{->>}[r] & \mathfrak{u}(\mgo).} \end{aligned} \end{align}

\begin{prop}
All columns and rows in \eqref{squarem} are exact sequences.
\end{prop}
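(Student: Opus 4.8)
The plan is to note that four of the six sequences in \eqref{squarem} have already been established, and to obtain the remaining two together with the commutativity of the four squares. Indeed, the middle column is Proposition \ref{middlecolumn} (iii), the middle row is Proposition \ref{middlerow} (ii), the bottom row is Theorem \ref{teo:sequence}, and the left column is Proposition \ref{leftcolumn}. Hence only the top row $\cO(\mathbf{G}) \hookrightarrow \cO(\mathbf{B}) \twoheadrightarrow \cO(\mathbf{G}_a^3)$ and the right column $\cO(\mathbf{G}_a^3) \hookrightarrow U(\mfG) \twoheadrightarrow \mathfrak{u}(\mgo)$ remain, and for each of them I would verify the hypotheses of Remark \ref{exactseq}.

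For the right column, \eqref{eq:u(m)-iso} gives that $U(\mfG) \twoheadrightarrow \mathfrak{u}(\mgo)$ has kernel $\langle a^4, b^4, c^2+c\rangle$. A short computation with the bracket \eqref{lie-bracket} shows that $a^4$, $b^4$ and $c^2+c$ are central in $U(\mfG)$; being primitive and algebraically independent, they generate a central, hence normal, Hopf subalgebra isomorphic to $\cO(\mathbf{G}_a^3)$, over which $U(\mfG)$ is free of rank $32 = \dim \mathfrak{u}(\mgo)$ by PBW, thus faithfully flat. Since $\ker(U(\mfG)\to \mathfrak{u}(\mgo)) = U(\mfG)\,\cO(\mathbf{G}_a^3)^{+}$, Remark \ref{exactseq} yields exactness.

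For the top row I would exhibit both maps as built from maps already available. The composite $\cO(\mathbf{G}) \hookrightarrow \cO(\mathfrak{G}) \simeq C \hookrightarrow \widetilde{D}$ coming from Propositions \ref{leftcolumn} and \ref{middlerow} sends $X_1, X_2, T^{\pm}$ to $x_{21}^2, w_{21}^2, {\bf g}^{\pm 2}$, which lie in $N \simeq \cO(\mathbf{B})$ by Proposition \ref{middlecolumn}; this defines an injective Hopf algebra map $\cO(\mathbf{G}) \hookrightarrow \cO(\mathbf{B})$ whose image $\ku[x_{21}^2, w_{21}^2, {\bf g}^{\pm 2}]$ is normal (as $\cO(\mathbf{B})$ is commutative) and over which $\cO(\mathbf{B})$ is free, hence faithfully flat, by the presentation in Proposition \ref{middlecolumn} (iv). Restricting the Hopf epimorphism $\pi\colon \widetilde{D} \to U(\mfG)$ of Proposition \ref{middlerow} to $N$, one finds $\pi(N) = \ku[a^4, b^4, c^2+c] = \cO(\mathbf{G}_a^3)$ and $\ker(\pi|_N) = \langle x_{21}^2, w_{21}^2, {\bf g}^2-1\rangle = N\,\cO(\mathbf{G})^{+}$, so Remark \ref{exactseq} applies once more. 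Finally, commutativity of the four squares is verified on the generators $x_1, x_2, {\bf g}^{\pm}, w_1, w_2, \zeta$, directly from the definitions of the maps. The only delicate point is that $x_{21}^2$ and $w_{21}^2$ carry the expected (skew-)primitive coproducts in $\widetilde{D}$, which follows from $x_1^2 = w_1^2 = 0$ together with Lemma \ref{commutationDtil}, and is in any case already encoded in Propositions \ref{middlecolumn}, \ref{middlerow} and \ref{leftcolumn}.
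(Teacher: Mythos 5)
Your proof is correct and follows essentially the same strategy as the paper: cite Theorem \ref{teo:sequence} and Propositions \ref{middlecolumn}, \ref{middlerow}, \ref{leftcolumn} for four of the six sequences, and then check the top row and right column separately (the paper delegates this last step to the analogous argument of \cite[Proposition 2.8]{ap}, whereas you spell it out via Remark \ref{exactseq}, correctly verifying centrality/normality, faithful flatness from the PBW bases, and the kernel conditions).
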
 

\pf 
The middle row is exact by Proposition \ref{middlerow} while the bottom row is exact by Theorem \ref{teo:sequence}. Also the middle and left columns are exact respectively by Proposition \ref{middlecolumn} and Proposition \ref{leftcolumn}. 
Arguing as in \cite[Proposition 2.8]{ap}, we  verify that the others sequences are exact. \epf 



\noindent \emph{Acknowledgements.} We would like to thank A. Premet for his valuable suggestions on the algebra $\mathfrak{u}(\mgo)$. We also thank E. R. Alvares for his  advice on the representation theory of basic algebras,
and M. Guerreiro and A. Grishkov for useful email exchanges.


\begin{thebibliography}{99}

\bibitem{A-leyva} N. Andruskiewitsch. \emph{An Introduction to Nichols Algebras}. In Quantization, Geometry and Noncommutative Structures in Mathematics and Physics. 
A. Cardona et al., eds., pp. 135--195, Springer-Nature (2017).

\bibitem{aah-triang} N. Andruskiewitsch, I. Angiono and I. Heckenberger.
\emph{On finite GK-dimensional Nichols algebras over abelian groups}. Mem. Amer. Math. Soc. \textbf{271}, No. 1329 (2021).

\bibitem{aah-oddchar} N. Andruskiewitsch, I. Angiono and I. Heckenberger.
\emph{Examples of finite-dimensional pointed Hopf algebras in positive characteristic}. In
\emph{Representation Theory, Mathematical Physics and Integrable Systems, in honor of Nicolai Reshetikhin}, eds.  A. Alexeev, \emph{et al.} Progr. Math. \textbf{340} (2021).


\bibitem{ABDF} N. Andruskiewitsch, D. Bagio, S. Della Flora and D. Flores.
\emph{Examples of finite-dimensional pointed Hopf algebras in characteristic 2}. 
Glasgow Math. J. \textbf{64},  65--78 (2022).

\bibitem{ad} N. Andruskiewitsch and J. Devoto. \emph{Extensions of Hopf algebras}.
Algebra i Analiz, \textbf{7} (1), 22-61 (1995); St. Petersburg Math. J.  \textbf{7} (1), 17-52 (1995).

\bibitem{ahs} N. Andruskiewitsch, I. Heckenberger and H-J. Schneider. \emph{The Nichols algebra of a semisimple Yetter-Drinfeld module}. Am. J. Math. \textbf{132} (6), 1493-1547 (2010).

\bibitem{ap} N. Andruskiewitsch and H. Pe\~ na Pollastri. \emph{On the restricted Jordan plane in odd characteristic}. J. Algebra Appl. \textbf{20} (1), 2140012 (2021).

\bibitem{ap1} N. Andruskiewitsch and H. Pe\~ na Pollastri. \emph{On the double of the (restricted) super Jordan plane}. New York J. Math. \textbf{28}, 1596-1622 (2022).


\bibitem{ass} I. Assem, D. Simson, A. Skowro\'nski. 
\emph{Elements of the Representation Theory of Associative Algebras}. Lond. Math. Soc. Stud. Texts \textbf{65},
Cambridge Univ. Press (2006).


\bibitem{ars} M. Auslander, I. Reiten and S. Smal{\o}. \emph{Representation theory of Artin algebras}. 
Camb. Stud. Adv. Math. \textbf{36}, Cambridge Univ. Press   (1995).

\bibitem{clw} C. Cibils, A. Lauve and S. Witherspoon. \emph{Hopf quivers and Nichols algebras in positive characteristic}. Proc. Amer. Math. Soc. \textbf{137} (12), 4029-4041 (2009).


\bibitem{cb} W. Crawley-Boevey. \emph{Tameness of biserial algebras}. Arch. Math. \textbf{65}, 399-407 (1995).

\bibitem{cushingetal} D. Cushing, G. W. Stagg, D. I. Stewart.
\emph{A Prolog assisted search for new simple Lie algebras}. \texttt{arXiv:2207.01094}.


\bibitem{dt} Y. Doi and M. Takeuchi. \emph{Multiplication alteration by two-cocycles. The quantum version}. Commun. Algebra \textbf{22} (14), 5715-5732 (1994).


\bibitem{do} A. Kh. Dolotkazin. \emph{Irreducible representations of a three-dimensional simple Lie algebra of characteristic p=2}. 
Math. Notes \textbf{24}, 588-590 (1979).





\bibitem{G} D. I. Gurevich,  \emph{The Yang-Baxter equation and the generalization of formal Lie theory}.
Dokl. Akad. Nauk SSSR 288, no. 4, 797--801 (1986).

\bibitem{HL} F. Huard and S. Liu. \emph{Tilted Special Biserial Algebras}. J. Algebra \textbf{217}, 679-700 (1999).

\bibitem{kr} L.H. Kauffman and D. E. Radford. \emph{A necessary and sufficient condition for a finite-dimensional Drinfeld double to be a ribbon algebra}. J. Algebra \textbf{153}, 98-114 (1993).



\bibitem{Ra} D. E. Radford.  \emph{Hopf algebras}. Series on Knots and Everything 49. Hackensack, NJ: World Scientific. xxii, 559 p. (2012).

\bibitem{sch} H.-J. Schneider. \emph{Some remarks on exact sequences of quantum groups}. Commun. Algebra \textbf{21} (9), 3337-3358 (1993).


\bibitem{skwa} A. Skowronski and  J. Waschbüsch. \emph{Representation-finite biserial algebras}.
J. Reine Angew. Math. \textbf{345}, 172--181  (1983).

\bibitem{str} H. Strade. \emph{Simple Lie algebras over fields
of positive characteristic}. de Gruyter Expo. Math. \textbf{38}, (2004).

\bibitem{s} H. Strade. \emph{Lie algebras of small dimension}.  Lie algebras, Vertex Operator Algebras and Their Applications, Contemp. Math. \textbf{442}, 233-265 (2007).




\bibitem{WW} B. Wald and J. Waschbüsch \emph{Tame biserial algebras}. J. Algebra \textbf{95}, 480-500 (1985).


\end{thebibliography}
\end{document}